\documentclass[a4paper,pdftex,reqno,10pt]{amsart}

\allowdisplaybreaks[1]

\usepackage[german,english]{babel}
\usepackage{enumerate}
\usepackage{amsmath,amssymb}
\usepackage{geometry}

\newcommand{\subspaces}[2]{\genfrac{[}{]}{0pt}{}{#1}{#2}}
\newcommand{\vek}[1]{\mathbf{#1}} 
\newcommand{\abs}[1]{\lvert#1\rvert}
\newcommand{\mat}[1]{\mathbf{#1}}
\newcommand{\gauss}[3]{\genfrac{[}{]}{0pt}{}{#1}{#2}_{#3}}
\newcommand{\points}{\mathcal{P}} 
\DeclareMathOperator{\PG}{PG}
\DeclareMathOperator{\Aut}{Aut} 
\DeclareMathOperator{\GL}{GL}
\DeclareMathOperator{\GaL}{\Gamma L} 
\DeclareMathOperator{\PGL}{PGL}
\DeclareMathOperator{\PGaL}{P\Gamma L} 
\newcommand{\trace}{\mathrm{Tr}} 
\newcommand{\denum}{\delta}

\newcommand{\sdist}{\mathrm{d}_{\mathrm{S}}}
\newcommand{\idist}{\mathrm{d}_{\mathrm{I}}}
\newcommand{\smax}{\mathrm{A}}
\newcommand{\tp}{\mathsf{T}}
\newcommand{\prob}{\mathrm{p}}

\newcommand{\F}{\mathbb{F}} 
\newcommand{\R}{\mathbb{R}} 
\newcommand{\Z}{\mathbb{Z}}

\newcommand{\graph}{\Gamma}
\newcommand{\gabidulin}{\mathcal{G}} 




\usepackage[colorlinks=true]{hyperref} \hypersetup{urlcolor=blue,
  citecolor=red}

\textheight=8.2 true in \textwidth=5.0 true in \topmargin 30pt
\setcounter{page}{1}



\newtheorem{theorem}{Theorem}[section]
\newtheorem{lemma}[theorem]{Lemma}
\newtheorem{question}{Question} 
\newtheorem*{problem}{Problem}

\theoremstyle{definition} 
\newtheorem{definition}[theorem]{Definition}
\newtheorem{remark}{Remark}



\title[Mixed-Dimension Subspace Codes]{Constructions and Bounds for
  Mixed-Dimension Subspace Codes}

\author[Thomas Honold, Michael Kiermaier and Sascha Kurz]{}

\subjclass{Primary 94B05, 05B25, 51E20; Secondary 51E14, 51E22, 51E23}
\keywords{Galois geometry, network coding, subspace code, partial spread}
\dedicatory{In memoriam Axel Kohnert (1962--2013)}
\email{honold@zju.edu.cn} 
\email{michael.kiermaier@uni-bayreuth.de}
\email{sascha.kurz@uni-bayreuth.de}

\thanks{Th.~Honold gratefully acknowledges financial
  support for a short visit to the University of Bayreuth in March
  ~2015, where part of this research was done. His work was also
  supported by the National Natural Science Foundation of China under
  Grant 61571006. M.~Kiermaier and S.~Kurz were supported by EU
  COST Action IC1104. In addition, S.~Kurz's work was supported
  through the DFG project ``Ganz\-zahlige Optimierungs\-modelle f\"ur
  Subspace Codes und endliche Geometrie'' (DFG grant KU 2430/3-1).}


\begin{document}
\maketitle


\centerline{\scshape Thomas Honold} \medskip {\footnotesize
  \centerline{Department of Information and Electronic Engineering}
  \centerline{Zhejiang University, 38 Zheda Road, 310027 Hangzhou,
    China} } 

\medskip

\centerline{\scshape Michael Kiermaier} \medskip {\footnotesize
  \centerline{Mathematisches Institut, Universit\"at Bayreuth, D-95440
    Bayreuth, Germany} }

\medskip

\centerline{\scshape Sascha Kurz} \medskip {\footnotesize
  \centerline{Mathematisches Institut, Universit\"at Bayreuth, D-95440
    Bayreuth, Germany} }

\bigskip


\begin{abstract}
  Codes in finite projective spaces equipped with the subspace
  distance have been proposed for error control in random
  linear network coding. The resulting so-called \emph{Main Problem
      of Subspace Coding} is to determine the maximum size
    $\smax_q(v,d)$ of a code in $\PG(v-1,\F_q)$ with minimum subspace
    distance $d$. Here we completely resolve this problem for
    $d\ge v-1$.  For $d=v-2$ we present some improved bounds and
    determine $\smax_q(5,3)=2q^3+2$ (all $q$), $\smax_2(7,5)=34$. We
    also provide an exposition of the known determination of
    $\smax_q(v,2)$, and a table with exact results and
    bounds for the numbers $\smax_2(v,d)$, $v\leq 7$.
\end{abstract}


\section{Introduction}\label{sec:intro}

For a prime power $q>1$ let $\mathbb{F}_q$ be the finite field with
$q$ elements and $\mathbb{F}_q^v$ the standard vector
space of dimension $v\geq 0$ over $\mathbb{F}_q$. The set of all subspaces
of $\mathbb{F}_q^v$, ordered by the incidence relation $\subseteq$, is
called \emph{($v-1$)-dimensional (coordinate) projective geometry over
  $\F_q$} and denoted by $\PG(v-1,\F_q)$. It forms a finite modular geometric
lattice with meet $X\wedge Y=X\cap Y$ and join $X\vee Y=X+Y$.

The study of geometric and combinatorial properties of $\PG(v-1,\F_q)$
and related structures forms the subject of \emph{Galois Geometry}---a
mathematical discipline with a long and renowned history of its own
but also with links to several other areas of discrete mathematics and
important applications in contemporary industry, such as cryptography
and error-correcting codes. For a comprehensive introduction to the
core subjects of Galois Geometry readers may consult the three-volume
treatise \cite{hirschfeld85,hirschfeld98,hirschfeld-thas91}. More
recent developments are surveyed in \cite{nova2011}.

It has long been recognized that classical error-correcting codes,
which were designed for point-to-point communication over a period of
now more than 60 years, can be studied in the Galois Geometry
framework. Recently, through the seminal work of Koetter, Kschischang
and Silva
\cite{koetter-kschischang08,silva-kschischang-koetter08,silva-kschischang-koetter10},
it was discovered that essentially the same is true for the
network-error-correcting codes developed by Cai, Yeung, Zhang and
others\ \cite{yeung-cai06a,yeung-cai06b,guang-zhang14}. Information in
packet networks with underlying packet space $\F_q^v$ can be
transmitted using subspaces of $\PG(v-1,\F_q)$ as codewords and
secured against errors (both random and adversarial errors) by
selecting the codewords subject to a lower bound on their mutual
distance in a suitable metric on $\PG(v-1,\F_q)$, resembling the
classical block code selection process based on the Hamming distance
properties.


Accordingly, we call any set $\mathcal{C}$ of subspaces of $\F_q^v$ a
\emph{$q$-ary subspace code of packet length $v$}.  Two widely used
distance measures for subspace codes (motivated by an
information-theoretic analysis of the Koetter-Kschischang-Silva model)
are the so-called \textit{subspace distance}
\begin{equation}
  \begin{aligned}
    \sdist(X,Y)&=\dim(X+Y)-\dim(X\cap Y)\\
    &=\dim(X)+\dim(Y)-2\cdot\dim(X\cap Y)\\
    &=2\cdot\dim(X+Y)-\dim(X)-\dim(Y)
  \end{aligned}  
\end{equation} 
and \textit{injection distance}
\begin{equation}
  \idist(X,Y)= \max\left\{\dim(X),\dim(Y)\right\}-\dim(X\cap Y).
\end{equation} 
With this the \textit{minimum distance} in the subspace metric of a
subspace code $\mathcal{C}$ containing at least two codewords is defined as
\begin{equation}
  \sdist(\mathcal{C}):=\min\left\{\sdist(X,Y);X,Y\in\mathcal{C},
    X\neq Y\right\},
\end{equation}
and that in the injection metric as
\begin{equation}
  \idist(\mathcal{C}):=\min\left\{\idist(X,Y);X,Y\in\mathcal{C},
    X\neq Y\right\}.\footnotemark 
\end{equation}
\footnotetext{Sometimes it will be convenient to allow
  $\#\mathcal{C}\leq 1$, in which case we formally set
  $\sdist(\mathcal{C})=\idist(\mathcal{C})=\infty$.}%
A subspace code $\mathcal{C}$ is said to be a
\textit{constant-dimension code} (or \textit{Grassmannian code}) if all
codewords in $\mathcal{C}$ have the same dimension over $\F_q$. Since
$\sdist(X,Y)=2\cdot\idist(X,Y)$ whenever $X$ and $Y$ are of the same
dimension, we need not care about the specific metric ($\sdist$ or
$\idist$) used when dealing with constant-dimension codes. Moreover,
$\sdist(\mathcal{C})=2\cdot\idist(\mathcal{C})$ in this case and hence
the minimum subspace distance of a constant-dimension code
is always an even integer.  

As in the classical case of block codes, the transmission rate of a
network communication system employing a subspace code $\mathcal{C}$
is proportional to $\log(\#\mathcal{C})$. Hence, given a lower bound
on the minimum distance $\sdist(\mathcal{C})$ or
$\idist(\mathcal{C})$ (providing, together with other parameters such
as the physical characteristics of the network and the decoding
algorithm used, a specified data integrity level\footnote{This
  integrity level is usually specified by an upper bound on the
  probability of transmission error allowed.}), we want the code size
$M=\#\mathcal{C}$ to be as large as possible. It is clear that
constant-dimension codes usually are not maximal in this respect and,
as a consequence, we need to look at general mixed-dimension subspace
codes for a rigorous solution of this optimization problem.

In the remaining part of this article we will
restrict ourselves to the subspace distance $\sdist$.
From a mathematical point of
view, any $v$-dimensional vector space $V$ over $\F_q$ is just as good
as the standard space $\F_q^v$ (since $V\cong\F_q^v$), and it will sometimes be
convenient to work with non-standard spaces (for example with the
extension field $\F_{q^v}/\F_q$, in order to exploit additional structure).
Hence we fix the following terminology:

\begin{definition}
  A \emph{$q$-ary $(v,M,d)$ subspace code}, also referred to as a
  \emph{subspace code with parameters $(v,M,d)_q$}, is a set
  $\mathcal{C}$ of subspaces of $V\cong\F_q^v$ with $M=\#\mathcal{C}$ and
  $\sdist(\mathcal{C})=d$.
  The space $V$ is called the \emph{ambient space} of
  $\mathcal{C}$.\footnote{Strictly speaking, the ambient space is part
    of the definition of a subspace code and we should write
    $(V,\mathcal{C})$ in place of $\mathcal{C}$. Since the ambient
    space is usually clear from the context, we have adopted the
    more convenient shorthand ``$\mathcal{C}$''.} The \emph{dimension
    distribution} of $\mathcal{C}$ is the sequence
  $\denum(\mathcal{C})=\left(\delta_0,\delta_1,\dots,\delta_v\right)$
  defined by $\delta_k=\#\{X\in\mathcal{C};\dim(X)=k\}$. Two subspace
  codes $\mathcal{C}_1,\mathcal{C}_2$ are said to be \emph{isomorphic}
  if there exists an isometry (with respect to the subspace metric)
  $\phi\colon V_1\to V_2$ between their ambient spaces satisfying
  $\phi(\mathcal{C}_1)=\mathcal{C}_2$.
\end{definition}
It is easily seen that isomorphic subspace codes
$\mathcal{C}_1,\mathcal{C}_2$ must have the same
alphabet size $q$ and the same ambient space dimension
$v=\dim(V_1)=\dim(V_2)$. The dimension distribution of a subspace 
code may be seen as a $q$-analogue of the Hamming weight distribution
of an ordinary block code. As in the block code case, the quantities
$\delta_k=\delta_k(\mathcal{C})$ are non-negative integers satisfying
$\sum_{k=0}^v \delta_k=M=\#\mathcal{C}$.
\begin{problem}[Main Problem of Subspace Coding\footnote{The name
    has been chosen to emphasize the analogy 
    with the ``main problem of (classical) coding theory'', which
    commonly refers to the problem of optimizing the parameters
    $(n,M,d)$ of ordinary block codes \cite{macwilliams-sloane77},
    and should not be taken literally as meaning ``the most
    important problem in this area''.}]
  For a given prime power $q\geq 2$, packet length $v\geq 1$ and
  minimum distance $d\in\{1,\dots,v\}$ determine the maximum size
  $\smax_q(v,d)=M$ of a $q$-ary $(v,M,d)$ subspace code 
  and---as a refinement---classify the
  corresponding optimal codes up to subspace code isomorphism.
\end{problem}


Although our ultimate focus will be on the main problem for general
mixed-dimension subspace codes as indicated, we will often build upon
known results for the same problem restricted to constant-dimension
codes, or mixed-dimension codes with only a small number of nonzero
dimension frequencies $\delta_k$. For this it will be convenient to
denote, for subsets $T\subseteq\{0,1,\dots,v\}$, the maximum size
of a $(v,M,d')_q$ subspace code $\mathcal{C}$ with $d'\geq d$ and
$\delta_k(\mathcal{C})=0$ for all $k\in\{0,1,\dots,v\}\setminus T$
by $\smax_q(v,d;T)$, and refer to subspace codes subject to this
dimension restriction accordingly as $(v,M,d;T)_q$
codes.
In other words, the set $T$ specifies the dimensions of the subspaces which can
be chosen as a codeword of a $(v,M,d;T)_q$ code, and determining the numbers 
$\smax_q(v,d;T)$ amounts to extending the main problem to $(v,M,d;T)_q$
codes.\footnote{In order to make $\smax_q(v,d;T)$ well-defined for all
  $d\in\{1,\dots,v\}$ and ensure the usual monotonicity property
  $\smax_q(v,d;T)\geq\smax_q(v,d';T)$ for $d\leq d'$,
  it is necessary to take the maximum in the definition of
$\smax_q(v,d;T)$ over all codes with $d'\geq d$ (and not only over
codes with exact minimum distance $d$). The definition of
$\smax_q(v,d)$ didn't require such extra care, since in the
unrestricted case we can alter dimensions of codewords freely and
hence transform any $(v,M,d')_q$ code with $d'\geq d$ into a
$(v,M,d)_q$ code.}


While a lot of research has been done on the
determination of the numbers $\smax_q(v,d;k)=\smax_q(v,d;\{k\})$, the
constant-dimension case, only very few results are known for
$\#T>1$.\footnote{Strictly speaking, this remark is true only in the
  binary case. For $q>2$ even in the
  constant-dimension case very few results are known.}

The purpose of this paper is to advance the knowledge in the
mixed-dimension case and determine the numbers
$\smax_q(v,d)$ for further parameters $q$, $v$, $d$. In this
regard we build upon previous work of several authors, as is nicely
surveyed by Etzion in
\cite[Sect.~4]{etzion2013problems}. In the parlance of
Etzion's survey, our contribution partially solves Research
Problem~20.\footnote{For those already familiar
with \cite{etzion2013problems} we remark that our numbers
$\smax_q(v,d)$ translate into Etzion's $\mathcal{A}_q^S(v,d)$.}


The Gaussian binomial coefficients $\gauss{v}{k}{q}$
give the number of $k$-dimensional subspaces of $\F_q^v$ (and of any
ambient space $V\cong\F_q^v$)
and satisfy
\begin{equation}
  \label{eq:gauss}
  \gauss{v}{k}{q}=\prod_{i=0}^{k-1}\frac{q^{v-i}-1}{q^{k-i}-1}
  =q^{k(v-k)}\cdot\bigl(1+\mathrm{o}(1)\bigr)\quad\text{for $q\to\infty$}.
\end{equation}
Since these numbers grow very quickly, especially for $k\approx v/2$,
the exact determination of $\smax_q(v,d)$ appears to be an intricate
task---except for some special cases. Even more challenging is the
refined problem of enumerating the isomorphism types of the
corresponding optimal subspace codes (i.e.\ those of size
$\smax_q(v,d)$). 
In some cases such an exhaustive enumeration is currently infeasible
due to the large number of isomorphism types or due to
computational limitations. In this context we regard the determination
of certain structural restrictions as a precursor to an exhaustive
classification.



The remaining part of this paper is structured as follows.  In
Section~\ref{sec:prelim} we provide further terminology and a few
auxiliary concepts and results, which have proved useful for the
subsequent subspace code optimization/classification. In
Section~\ref{sec:general} we determine the numbers $\smax_q(v,d)$ for
general $q$, $v$ and some special values of $d$. Finally, in
Section~\ref{sec:special} we further discuss the binary case $q=2$
and determine the numbers $\smax_2(v,d)$, as well as the corresponding
number of isomorphism classes of optimal codes, for $v\leq 7$ and all but a
few hard-to-resolve cases. For the remaining cases we provide improved
bounds for $\smax_2(v,d)$ using a variety of methods.
On the reader's side we will assume at least some rudimentary
knowledge of subspace coding, which e.g.\ can be acquired by reading
the survey \cite{etzion2015galois} or its predecessor
\cite{etzion2013problems}.

\section{Preliminaries}
\label{sec:prelim}

\subsection{The Automorphism Group of $\bigl(\PG(V),\sdist\bigr)$}
\label{ssec:auto}

Let us start with a description of the automorphism group 
of the metric space $\PG(v-1,\F_q)$ relative to the subspace
distance. Since a general $v$-dimensional ambient space $V$ is
isomorphic to $\F_q^v$ as a vector space over $\F_q$ (and hence
isometric to $(\F_q^v,\sdist)$), this yields a description of all automorphism
groups $\Aut(V,\sdist)$ and also of all isometries between different 
ambient spaces $(V_1,\sdist)$ and $(V_2,\sdist)$.

It is clear that the linear group $\GL(v,\F_q)$ acts on
$\PG(v-1,\F_q)$ as a group of $\F_q$-linear isometries. If $q$ is not
prime then there are additional semilinear isometries arising from the
Galois group $\Aut(\F_q)=\Aut(\F_q/\F_p)$ in the obvious way
(component-wise action on $\F_q^v$). Moreover, mapping a subspace
$X\subseteq\F_q^v$ (``linear code of length $v$ over $\F_q$'') to its
dual code $X^\perp$ (with respect to the standard inner product)
respects the subspace distance and hence yields a further automorphism
$\pi$ of the metric space $\PG(v-1,\F_q)$. The map $\pi$ also
represents a polarity (correlation of order $2$) of the geometry
$\PG(v-1,\F_q)$.

\begin{theorem}
  \label{thm:autos}
  Suppose that $v\geq 3$.\footnote{The case $v\leq 2$ is completely
    trivial---as far as subspace codes are concerned---and can be
    safely excluded.} The automorphism group $G$ of
  $\PG(v-1,\F_q)$, viewed as a metric space with respect to the
  subspace distance, is generated by $\GL(v,\F_q)$,
  $\Aut(\F_q)$ and $\pi$. More precisely, $G$ is the
  semidirect product of the projective general semilinear group
  $\PGaL(v,\F_q)$ with a group of order $2$ acting by matrix
  transposition on $\PGL(v,\F_q)$ and trivially on $\Aut(\F_q)$.  
\end{theorem}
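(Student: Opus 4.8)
The plan is to prove the two inclusions separately. The inclusion ``$\supseteq$'' is exactly the observation recorded just before the theorem: $\GL(v,\F_q)$, $\Aut(\F_q)$ and $\pi$ all act as isometries of $\bigl(\PG(v-1,\F_q),\sdist\bigr)$, and the action of $\GaL(v,\F_q)$ on $\PG(v-1,\F_q)$ has kernel the nonzero scalars, so the subgroup they generate is $\langle\PGaL(v,\F_q),\pi\rangle$. The real content is ``$\subseteq$'': every isometry $\phi$ (automatically a bijection, since the ambient set is finite) lies in $\langle\PGaL(v,\F_q),\pi\rangle$. The guiding idea is that for $v\geq 3$ the metric $\sdist$ rigidly encodes the lattice structure of $\PG(v-1,\F_q)$, so that every isometry is forced to be a collineation or a correlation, at which point the Fundamental Theorem of Projective Geometry applies.

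First I would analyse the action of $\phi$ on the extreme elements $\{0\}$ and $\F_q^v$. One checks that $\sdist(X,Y)=v$ holds if and only if $Y$ is a complement of $X$, and a $k$-dimensional subspace of $\F_q^v$ has exactly $q^{k(v-k)}$ complements, a number which equals $1$ precisely when $k\in\{0,v\}$. Hence $\{0\}$ and $\F_q^v$ are the only elements of $\PG(v-1,\F_q)$ possessing a unique element at maximal distance $v$, so this pair is $\phi$-invariant; replacing $\phi$ by $\pi\circ\phi$ if necessary, we may assume $\phi(\{0\})=\{0\}$ and $\phi(\F_q^v)=\F_q^v$. Then $\dim\phi(X)=\sdist\bigl(\phi(X),\{0\}\bigr)=\sdist(X,\{0\})=\dim(X)$, so $\phi$ preserves dimension. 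Next I would record the metric description of incidence: for a point $P$ (i.e.\ $\dim P=1$) and an arbitrary subspace $X$ one has $\sdist(P,X)=\dim(X)-1$ if $P\subseteq X$ and $\sdist(P,X)=\dim(X)+1$ otherwise. Since $\phi$ preserves both $\sdist$ and $\dim$, it preserves the relation $P\subseteq X$; in particular $\phi$ permutes points and lines ($2$-dimensional subspaces) and preserves point--line incidence, so it induces a collineation of the projective space of (projective) dimension $v-1\geq 2$. By the Fundamental Theorem of Projective Geometry this collineation is induced by some $g\in\PGaL(v,\F_q)$. Because every subspace is the span of the points it contains and $\phi$ respects dimension and point-containment, $\phi$ and $g$ agree on every subspace, whence $\phi=g$. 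Together with the possible initial composition with $\pi$ this gives $G=\langle\PGaL(v,\F_q),\pi\rangle$.

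It then remains to identify the group structure. One has $\pi^2=\mathrm{id}$, and $\pi\notin\PGaL(v,\F_q)$ for $v\geq 3$ since $\pi$ maps points to hyperplanes and so fails to preserve dimension; thus $\langle\pi\rangle$ has order $2$ and intersects $\PGaL(v,\F_q)$ trivially, giving $G=\PGaL(v,\F_q)\rtimes\langle\pi\rangle$. A short computation with the standard inner product shows $(\mat{A}\cdot X)^\perp=\mat{A}^{-\tp}\cdot X^\perp$, hence $\pi\circ[\mat{A}]\circ\pi=[\mat{A}^{-\tp}]$ on $\PGL(v,\F_q)$, the standard graph automorphism, which is the action denoted by matrix transposition in the statement; similarly $\pi$ commutes with the field automorphisms, the standard inner product being defined over the prime field. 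This yields the asserted semidirect-product decomposition.

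The conceptual heart of the argument—and the main obstacle—is the first half: the passage from a purely metric map to an incidence-preserving map (a collineation after adjusting by $\pi$). This is exactly where the hypothesis $v\geq 3$ is used, via the Fundamental Theorem of Projective Geometry; for $v=2$ it genuinely fails, since after fixing $\{0\}$ and $\F_q^2$ every permutation of the $q+1$ points is an isometry of $\PG(1,\F_q)$, so there the automorphism group is much larger than $\PGaL(2,\F_q)\rtimes\langle\pi\rangle$. Once the incidence-geometric reduction is in place, the remaining bookkeeping of the semidirect-product structure is routine.
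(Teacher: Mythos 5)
Your proof is correct and follows essentially the same route as the paper: characterize $\{0\}$ and $\F_q^v$ metrically via unique complements, compose with $\pi$ if necessary so that dimensions (and hence incidence) are preserved, invoke the Fundamental Theorem of Projective Geometry for $v\geq 3$, and then compute the conjugation action of $\pi$ via the adjoint with respect to the standard inner product. You merely spell out a few steps the paper leaves implicit (the metric encoding of point--subspace incidence and the agreement of $\phi$ with the induced collineation on all subspaces), which is fine.
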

Most of this theorem is already contained in \cite{trautmann13}, but
we include a complete proof for convenience.
\begin{proof}
  Let $f$ be an automorphism of $\PG(v-1,\F_q)$. Then either $f$
  interchanges $\{\vek{0}\}$ and $\F_q^v$ or leaves both subspaces
  invariant (using the fact that $\{\vek{0}\}$, $\F_q^v$ are the only
  subspaces with a unique complementary subspace). Moreover, if $f$
  fixes $\{\vek{0}\}$, $\F_q^v$ then it preserves the dimension of
  subspaces and hence represents a collineation of the geometry
  $\PG(v-1,\F_q)$. By the Fundamental Theorem of Projective Geometry
  (here we use the assumption $v\geq 3$), a collineation is
  represented by an element of $\PGaL(v,\F_q)$. Since $\pi$
  interchanges $\{\vek{0}\}$ and $\F_q^v$, either $f$ or $f\circ\pi$
  stabilizes $\{\vek{0}\}$, $\F_q^v$ and belongs to
  $\PGaL(v,\F_q)$. This proves the first assertion and shows that
  $\PGaL(v,\F_q)$ has index $2$ in $G$.\footnote{The nontrivial coset
    $\{\pi\circ g;g\in\PGaL(v,\F_q)\}$ consists precisely of all
    correlations of $\PG(v-1,\F_q)$.}  Finally, denoting by $\phi$ the
  Frobenius automorphism of $\F_q$ (over its prime field $\F_p$), we
  have
  $\phi(x_1y_1+\dots+x_vy_v)=\phi(x_1)\phi(y_1)+\dots+\phi(x_v)\phi(y_v)$
  and hence, using a dimension argument,
  $\phi(X^\perp)=\phi(X)^\perp$, i.e.\
  $\phi\circ\pi=\pi\circ\phi$. Since the adjoint map (with respect to
  the standard inner product on $\F_q^v$) of
  $\vek{x}\to\mat{A}\vek{x}$ is $\vek{y}\to\mat{A}^\tp\vek{y}$, the
  second assertion follows and the proof is complete.
\end{proof}
In effect, Theorem~\ref{thm:autos} reduces the isomorphism problem for
subspace codes to the determination of the orbits of $\GL(v,\F_q)$,
respectively $\GaL(v,\F_q)$, on subsets of $\PG(v-1,\F_q)$. In the
most important case $q=2$ the semilinear part is void, which further
simplifies the problem. As a word of caution we remark that, in view
of the presence of the polarity $\pi$, the dimension distribution of
subspace codes is not an isomorphism invariant. Rather we have that
$\delta(\mathcal{C})=(\delta_0,\delta_1,\dots,\delta_v)$ leaves for a
code $\mathcal{C}'\cong\mathcal{C}$ the reverse distribution
$\delta(\mathcal{C}')=(\delta_v,\delta_{v-1},\dots,\delta_0)$ as a
second possibility.

The formula in \eqref{eq:gauss} for the Gaussian binomial
coefficients, which can be put into the form
\begin{align*}
  \gauss{v}{k}{q}&=\frac{\prod_{i=0}^{v-1}(q^v-q^i)}
  {q^{k(v-k)}\cdot\prod_{i=0}^{k-1}(q^k-q^i)
  \cdot\prod_{i=0}^{v-k-1}(q^{v-k}-q^i)}\\
  &=\frac{\#\GL(v,\F_q)}{q^{k(v-k)}\cdot\#\GL(k,\F_q)\cdot\#\GL(v-k,\F_q)},
\end{align*}
reflects the group-theoretical fact that $\GL(v,\F_q)$ acts on the set
of $k$-dimensional subspaces of $\F_q^v$ transitively and
with a stabilizer isomorphic to
\begin{equation*}
  \begin{pmatrix}
    \GL(k,\F_q)&*\\
    \mat{0}&\GL(v-k,\F_q)
  \end{pmatrix}.
\end{equation*}
We can go further and ask for a description of the orbits of
$\GL(v,\F_q)$ in its induced action on ordered pairs $(X,Y)$ of
subspaces. Such a description has significance for modeling the
transmission of subspaces in the Koetter-Kschischang-Silva model by a
discrete memoryless (stationary) channel, which in essence amounts to
specifying time-independent transition probabilities $\prob(Y|X)$.


\begin{lemma}
  \label{lemma_iso_pairs}
  For any integer triple $a,b,c$ satisfying $0\leq a,b\leq v$ and
  $\max\{0,a+b-v\}\leq c\leq\min\{a,b\}$ the group $\GL(v,q)$ acts
  transitively on ordered pairs of subspaces $(X,Y)$ of $\F_q^v$ with
  $\dim(X)=a$, $\dim(Y)=b$, and $\dim(X\cap
  Y)=c$.\footnote{Alternatively, we could prescribe the dimension of
    the join $X+Y$ in place of $X\cap Y$.} Moreover, each such
  integer triple gives rise to an orbit of $\GL(v,\F_q)$ on ordered
  pairs of subspaces of $\F_q^v$ with length
  \begin{equation*}
    q^{(a-c)(b-c)}\gauss{v}{c}{q}\gauss{v-c}{a-c}{q}\gauss{v-a}{b-c}{q}>0.
  \end{equation*}

\end{lemma}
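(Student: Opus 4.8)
The plan is to establish transitivity first and then count the orbit via the orbit–stabilizer theorem, rather than by multiplying the three Gaussian binomials directly (though the final answer will be recognizable as such a product). For transitivity, I would pick a convenient ``normal form'' pair $(X_0,Y_0)$ with the prescribed dimensions $\dim X_0=a$, $\dim Y_0=b$, $\dim(X_0\cap Y_0)=c$, and then show that any pair $(X,Y)$ with the same invariants can be mapped to it by some element of $\GL(v,q)$. The natural choice is to fix a basis $e_1,\dots,e_v$ of $\F_q^v$ and set $X_0=\langle e_1,\dots,e_a\rangle$, $Y_0=\langle e_{a-c+1},\dots,e_{a-c+b}\rangle$, so that $X_0\cap Y_0=\langle e_{a-c+1},\dots,e_a\rangle$ has dimension $c$ and $X_0+Y_0=\langle e_1,\dots,e_{a+b-c}\rangle$; the hypothesis $\max\{0,a+b-v\}\le c\le\min\{a,b\}$ is exactly what makes these dimensions consistent and keeps all indices in range $1,\dots,v$.

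Next, given an arbitrary pair $(X,Y)$ with the same invariants, I would build a basis of $\F_q^v$ adapted to the flag $X\cap Y\subseteq X,\,Y\subseteq X+Y\subseteq\F_q^v$: start with a basis $f_{a-c+1},\dots,f_a$ of $X\cap Y$, extend it by $f_1,\dots,f_{a-c}$ to a basis of $X$, extend $X\cap Y$ by $f_{a+1},\dots,f_{a-c+b}$ to a basis of $Y$, note that $f_1,\dots,f_{a-c+b}$ is then a basis of $X+Y$ (here one uses the dimension formula $\dim(X+Y)=a+b-c$ together with the fact that $X$ and $Y$ meet exactly in $X\cap Y$), and finally extend to a basis $f_1,\dots,f_v$ of $\F_q^v$. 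The linear map sending $f_i\mapsto e_i$ for all $i$ lies in $\GL(v,q)$ and carries $(X,Y)$ to $(X_0,Y_0)$, proving transitivity.

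For the orbit length, I would compute the stabilizer $H$ of $(X_0,Y_0)$ in $\GL(v,q)$ and invoke $\lvert\text{orbit}\rvert=\#\GL(v,q)/\#H$. Writing $\F_q^v=U\oplus W\oplus W'\oplus Z$ with $U=X_0\cap Y_0$ ($\dim c$), $W$ a complement of $U$ in $X_0$ ($\dim a-c$), $W'$ a complement of $U$ in $Y_0$ ($\dim b-c$), and $Z$ a complement of $X_0+Y_0$ in $\F_q^v$ ($\dim v-a-b+c$), an element $g$ of $\GL(v,q)$ stabilizes both $X_0$ and $Y_0$ iff it stabilizes $U=X_0\cap Y_0$ and maps $W$ into $X_0=U\oplus W$ and $W'$ into $Y_0=U\oplus W'$, with the induced maps on the quotients $X_0/U$, $Y_0/U$, and $\F_q^v/(X_0+Y_0)$ all invertible; writing $g$ as a block matrix in this decomposition, the stabilizer condition forces a block-triangular shape whose order factors as a product of $\#\GL$-factors (for the ``diagonal'' actions on $U$, $W$, $W'$, $Z$) times a power of $q$ counting the free off-diagonal blocks. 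Carrying out this bookkeeping and simplifying $\#\GL(v,q)/\#H$ using the identity $\gauss{m}{r}{q}=\#\GL(m,q)/\bigl(q^{r(m-r)}\#\GL(r,q)\#\GL(m-r,q)\bigr)$ displayed earlier yields exactly $q^{(a-c)(b-c)}\gauss{v}{c}{q}\gauss{v-c}{a-c}{q}\gauss{v-a}{b-c}{q}$. The main obstacle is the careful block-matrix analysis of the stabilizer: one must correctly identify which off-diagonal blocks are free and which are constrained, count the free parameters (the exponent $(a-c)(b-c)$ comes from the $W'\!\to\!W$ block), and then match the messy product of orders against the clean product of Gaussian binomials; positivity of the stated expression is immediate once the indices are seen to lie in the admissible ranges guaranteed by the hypotheses on $a,b,c$.
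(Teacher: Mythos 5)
Your proposal is correct and follows essentially the same route as the paper: transitivity via a basis adapted to the chain $X\cap Y\subseteq X,Y\subseteq X+Y$, and the orbit length via the orbit--stabilizer theorem with the block-triangular stabilizer (the paper displays exactly the block shape you describe and leaves the same bookkeeping as a ``short computation''). One small clarification: the factor $q^{(a-c)(b-c)}$ in the orbit length arises because the $W'\to W$ (equivalently $W\to W'$) block of the stabilizer is \emph{forced to vanish}, i.e.\ it is a constrained rather than a free block, which is why the orbit is larger by that factor than the plain product of Gaussian binomials.
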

\begin{proof}
  The restrictions on $a,b,c$ are obviously necessary, since $\dim(X\cap
  Y)\leq\min\{\dim(X),\dim(Y)\}$ and $\dim(X\cap
  Y)=\dim(X)+\dim(Y)-\dim(X+Y)\geq\dim(X)+\dim(Y)-v$. Conversely, if
  $a,b,c$ satisfy the restrictions then $c$, $a-c$, $b-c$ are
  non-negative with sum $c+(a-c)+(b-c)=a+b-c\leq v$. Hence we can choose
  $a+b-c$ linearly independent vectors
  $\vek{b}_1,\dots,\vek{b}_{a+b-c}$ in $\F_q^v$ and set
  $X=\langle\vek{b}_1,\dots,\vek{b}_a\rangle$,
  $Y=\langle\vek{b}_1,\dots,\vek{b}_b\rangle$, and consequently $X\cap
  Y=\langle\vek{b}_1,\dots,\vek{b}_c\rangle$.

  It remains to show that $\GL(v,q)$ acts transitively on those pairs
  of subspaces and compute the orbit lengths. Transitivity is an
  immediate consequence of the fact that the corresponding sequences
  of $a+b-c$ linearly independent vectors, defined as above, can be
  isomorphically mapped onto each other. The stabilizer of $(X,Y)$ in
  $\GL(v,\F_q)$ has the form
  \begin{equation*}
    \begin{pmatrix}
      \GL(c,\F_q)&*&*&*\\
      \mat{0}&\GL(a-c,\F_q)&\mat{0}&*\\
      \mat{0}&\mat{0}&\GL(b-c,\F_q)&*\\
      \mat{0}&\mat{0}&\mat{0}&\GL(v-a-b+c,\F_q)
    \end{pmatrix},
  \end{equation*}
  which leads to the stated formula for the orbit length
  after a short computation.\footnote{Alternatively, count quadruples
    $(X,Y,Z,W)$ of subspaces of $\F_q^v$ satisfying $X\cap Y=Z$,
    $X+Y=W$ and $X/Z$ is complementary to $Y/Z$ in $W/Z$.}
\end{proof}


\subsection{Basic Properties of the Numbers $\smax_q(v,d;T)$}
\label{ssec:basic}

In this subsection we collect some elementary but useful properties of
the numbers $\smax_q(v,d;T)$ and consider briefly the growth of
$k\mapsto\smax_q(v,d;k)$ (the constant-dimension case). Henceforth
$V$ will denote a $v$-dimensional vector space over $\F_q$, if not
explicitly stated otherwise, and we will use the abbreviations
$\subspaces{V}{T}$ for the set of all subspaces $X\subseteq V$ with
$\dim(X)\in T$ ($\subspaces{V}{k}$ in the constant-dimension case
$T=\{k\}$) and $\mathcal{C}_T=\mathcal{C}\cap\subspaces{V}{T}$ for
subspace codes $\mathcal{C}$ with ambient space $V$ (with the usual
convention $\mathcal{C}_k=\mathcal{C}_{\{k\}}$). Further we set
$[a,b]=\{a,a+1,\dots,b-1,b\}$ for $0\leq a\leq b\leq v$.

Note that the following
properties apply in particular to $\smax_q(v,d)=\smax_q(v,d;[0,v])$.

\begin{lemma}\label{lma:smax}\label{obs_mixed_bounds}
  \begin{enumerate}[(i)]
  \item\label{smax:d=1}
    $\smax_q(v,1;T)=\sum_{t\in T} \gauss{v}{t}{q}$, and the unique
    optimal code in this case is
    $\subspaces{V}{T} =\biguplus_{t\in T}\subspaces{V}{t}$;
  \item\label{smax:d<d'}
    $\smax_q(v,d;T)\geq\smax_q(v,d';T)$ for all $1\leq d\leq d'\leq v$;
  \item\label{smax:subset} 
    $\smax_q(v,d;T)\leq\smax_q(v,d;T')$ for all
    $T\subseteq T'\subseteq[0,v]$;
  \item\label{smax:subadd}
    $\smax_q(v,d;T\cup T')\le \smax_q(v,d;T)+\smax_q(v,d;T')$ for all
    $T, T'\subseteq[0,v]$; equality holds if
    $\min\bigl\{\abs{t-t'};t\in T,t'\in T'\bigr\}$ (i.e., the distance
    between $T$ and $T'$ in the Euclidean metric) is at least $d$;
  \item\label{smax:symm}
    $\smax_q(v,d;T)=\smax_q(v,d;v-T)$, where $v-T=\{v-t;t\in T\}$.
  \item\label{smax:diam} 
    The metric space $\subspaces{V}{T}$, $T\neq\emptyset$, has
    diameter $v-d$, where $d=\min\bigl\{\abs{s+t-v};s,t\in T\bigr\}$
    (the distance between $v$ and $T+T\subset\R$ in
    the Euclidean metric).\footnote{In particular, $\subspaces{V}{T}$
      has diameter $v$ if there exist $s,t\in T$ with $s+t=v$ and
      diameter $<v$ otherwise.}
  \end{enumerate}
\end{lemma}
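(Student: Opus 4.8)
The plan is to treat (i)--(v) by short direct arguments and to reserve the real bookkeeping for (vi); throughout, the workhorse is the elementary identity $\sdist(X,Y)=\dim(X+Y)-\dim(X\cap Y)=\dim X+\dim Y-2\dim(X\cap Y)$ together with $0\le\dim(X\cap Y)\le\min\{\dim X,\dim Y\}$ and $\max\{\dim X,\dim Y\}\le\dim(X+Y)\le\min\{v,\dim X+\dim Y\}$. For (i) I would observe that distinct subspaces are always at distance $\ge 1$, so $\subspaces{V}{T}$ itself is a $(v,M,d';T)_q$ code with $d'\ge 1$ and $M=\sum_{t\in T}\gauss{v}{t}{q}$; it is optimal since no code over dimensions in $T$ can be larger, and it is the unique such code because there are exactly $\sum_{t\in T}\gauss{v}{t}{q}$ subspaces of dimension in $T$, forcing any code of that size to contain all of them. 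Item (ii) is immediate from the definition of $\smax_q(v,d;T)$ as a maximum over codes of minimum distance $\ge d$ (this is exactly why the paper's footnote builds that slack into the definition), and (iii) holds because a $(v,M,d;T)_q$ code is a fortiori a $(v,M,d;T')_q$ code when $T\subseteq T'$.

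For the inequality in (iv) I would take an optimal $(v,M,d;T\cup T')_q$ code $\mathcal{C}$, split it as $\mathcal{C}=\mathcal{C}_T\uplus\mathcal{C}_{T'\setminus T}$ (using $T\cup T'=T\uplus(T'\setminus T)$), and note that the two parts are a $(v,\cdot,\ge d;T)_q$ and a $(v,\cdot,\ge d;T')_q$ code respectively, invoking the convention for parts of size $\le 1$; adding the sizes gives the bound. For the equality clause, assume the Euclidean distance between $T$ and $T'$ is $\ge d$, so in particular $T\cap T'=\emptyset$ since $d\ge 1$; then for optimal codes $\mathcal{C}_1,\mathcal{C}_2$ realizing $\smax_q(v,d;T)$ resp.\ $\smax_q(v,d;T')$, the disjoint union $\mathcal{C}_1\uplus\mathcal{C}_2$ has every cross-pair at distance $\sdist(X,Y)\ge\lvert\dim X-\dim Y\rvert\ge d$ (because $\dim(X\cap Y)\le\min\{\dim X,\dim Y\}$), hence is a $(v,\cdot,\ge d;T\cup T')_q$ code of the required size. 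Item (v) follows from Theorem~\ref{thm:autos}: the polarity $\pi\colon X\mapsto X^\perp$ is an isometry of $\bigl(\PG(v-1,\F_q),\sdist\bigr)$ taking $t$-subspaces to $(v-t)$-subspaces, so $\mathcal{C}\mapsto\pi(\mathcal{C})$ is a size- and distance-preserving bijection between $(v,M,d;T)_q$ and $(v,M,d;v-T)_q$ codes, and applying it twice yields the asserted equality.

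The substantive item is (vi), though even there the argument reduces to an optimization of $\sdist$ followed by a realizability check. Putting $s=\dim X$, $t=\dim Y$ and $c=\dim(X\cap Y)$ for $X,Y\in\subspaces{V}{T}$, one has $\sdist(X,Y)=s+t-2c$ with $c$ ranging over $[\max\{0,s+t-v\},\min\{s,t\}]$, so $\sdist(X,Y)\le s+t-2\max\{0,s+t-v\}=\min\{s+t,\,2v-s-t\}=v-\lvert s+t-v\rvert$; maximizing over $s,t\in T$ shows the diameter is at most $v-\min\{\lvert s+t-v\rvert;s,t\in T\}=v-d$. For the matching lower bound I would pick $s,t\in T$ attaining that minimum and apply Lemma~\ref{lemma_iso_pairs} to the triple $\bigl(s,t,\max\{0,s+t-v\}\bigr)$, whose entries satisfy that lemma's constraints since $\max\{0,s+t-v\}\le\min\{s,t\}$ always holds, to obtain subspaces $X,Y$ with the extremal intersection dimension and hence $\sdist(X,Y)=v-d$. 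I do not expect a genuine obstacle here; the only points to watch are keeping the size-$\le 1$ convention straight in (iv) and handling the degenerate case $\#\subspaces{V}{T}\le 1$ in (vi), which forces $T=\{0\}$ or $T=\{v\}$, where the space is a single point of diameter $0$ and indeed $v-d=v-v=0$, so the formula still holds.
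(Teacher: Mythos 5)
Your proof is correct and follows essentially the same route as the paper: the splitting argument for \eqref{smax:subadd}, the polarity $\pi$ for \eqref{smax:symm}, and maximizing $\sdist$ over the admissible intersection dimensions for \eqref{smax:diam}. The only cosmetic differences are that you spell out the equality clause of \eqref{smax:subadd} and the realizability of the extremal distance in \eqref{smax:diam} explicitly (via Lemma~\ref{lemma_iso_pairs}), where the paper leaves the former implicit and handles the latter by the duality reduction to the case $s+t\le v$; both versions are fine.
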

\begin{proof}
  Only \eqref{smax:subadd}, \eqref{smax:symm} and \eqref{smax:diam}
  require a proof. 

  In \eqref{smax:subadd} we may assume $T\cap T'=\emptyset$.
  (Otherwise write $T\cup T'=T\uplus(T'\setminus T)$ and use
  \eqref{smax:subset}.) If $\mathcal{C}$ is any $(v,M,d;T\cup T')_q$
  code then $\#\mathcal{C}_T\leq\smax_q(v,d;T)$,
  $\#\mathcal{C}_{T'}\leq\smax_q(v,d;T')$ and
  $M=\#\mathcal{C}_T+\#\mathcal{C}_{T'}\leq\smax_q(v,d;T)
  +\smax_q(v,d;T')$, as asserted.

  For the proof of \eqref{smax:symm} assume $V=\F_q^v$ and use the
  fact that the map $\pi\colon X\to X^\perp$ represents an
  automorphism of the metric space $\PG(v-1,\F_q)$ and maps
  $\subspaces{V}{T}$ onto $\subspaces{V}{v-T}$.

  Finally, \eqref{smax:symm} implies that the largest possible
  distance between $X\in\subspaces{V}{s}$, $Y\in\subspaces{V}{t}$ is
  $\min\{s+t,2v-s-t\}$. (This is clearly true if $s+t\leq v$, and the
  case $s+t>v$ can be reduced to the former by setting $s'=v-s$,
  $t'=v-t$ and using \eqref{smax:symm}.) In particular, the diameter
  of $\subspaces{V}{s}$ is $2\min\{s,v-s\}$. Assertion
  \eqref{smax:diam} now follows from the observation that
  $\min\{s+t,2v-s-t\}=v-\abs{s+t-v}$.
\end{proof}

Next we discuss the growth of the numbers
$\smax_q(v,d;k)$ as a function of $k\in\{0,1,\dots,\lfloor
v/2\rfloor\}$.\footnote{By symmetry, the range
  $k\in\{\lfloor v/2\rfloor+1,\dots,v\}$ need not be considered; cf.\
  Lemma~\ref{lma:smax}\eqref{smax:symm}.} While not directly applicable to the
mixed-dimension case, this analysis provides some useful information also
for this case, since mixed-dimension codes are composed
of constant-dimension ``layers''.

Since the minimum distance of a
a constant-dimension code is an even integer, we need only consider
the case $d=2\delta\in2\Z$.
\begin{lemma}
  \label{lma:unimodal}
  For $1\leq\delta\leq k\leq\lfloor v/2\rfloor$ the inequality
  \begin{equation*}
  \frac{\smax_q(v,2\delta;k)}{\smax_q(v,2\delta;k-1)}
  >q^{v-2k+\delta}\cdot C(q,\delta)
\end{equation*}
holds with $C(q,1)=1$ and $C(q,\delta)=1-1/q$ for $\delta\geq 2$;
in particular, $\smax_q(v,2\delta;k)>q\cdot
\smax_q(v,2\delta;k-1)$. As a consequence, the numbers $\smax_q(v,2\delta;k)$,
$k\in[\delta,v-\delta]$, form a strictly unimodal sequence.
\end{lemma}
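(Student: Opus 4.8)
The plan is to squeeze the ratio between a construction lower bound for $\smax_q(v,2\delta;k)$ and a packing (anticode) upper bound for $\smax_q(v,2\delta;k-1)$, read off the constant $C(q,\delta)$ from one elementary product estimate, and then deduce unimodality formally from the symmetry of Lemma~\ref{obs_mixed_bounds}\eqref{smax:symm}. The case $\delta=1$ I would dispose of first and directly: since any two distinct $k$-subspaces of $V$ already have subspace distance $\geq 2$, the whole Grassmannian is a code of minimum distance $2$, so $\smax_q(v,2;k)=\gauss{v}{k}{q}$, and
\[
  \frac{\smax_q(v,2;k)}{\smax_q(v,2;k-1)}=\frac{\gauss{v}{k}{q}}{\gauss{v}{k-1}{q}}=\frac{q^{v-k+1}-1}{q^{k}-1}>q^{v-2k+1},
\]
the last inequality being equivalent to $q^{v-2k+1}>1$, i.e.\ to $2k\leq v$, which holds by hypothesis. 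This is the claim with $C(q,1)=1$, and since then $q^{v-2k+1}\geq q$ it also yields the ``in particular'' part.

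For $\delta\geq 2$ I would invoke two standard facts. The lifted Gabidulin (MRD) construction, applicable because $k\leq\lfloor v/2\rfloor\leq v-k$, gives $\smax_q(v,2\delta;k)\geq q^{(v-k)(k-\delta+1)}$. For the denominator I would apply the anticode (Singleton-type) bound for the Grassmann scheme to the ``ball'' anticode $\mathcal{A}=\{W\in\subspaces{V}{k-1}:W\supseteq U_0\}$ of a fixed $(k-\delta)$-dimensional subspace $U_0$: any two members of $\mathcal{A}$ meet in a subspace of dimension $\geq k-\delta$, so $\mathcal{A}$ has diameter $2(\delta-1)<2\delta$, whence
\[
  \smax_q(v,2\delta;k-1)\leq\frac{\gauss{v}{k-1}{q}}{\abs{\mathcal{A}}}=\frac{\gauss{v}{k-1}{q}}{\gauss{v-k+\delta}{\delta-1}{q}}=\prod_{j=1}^{k-\delta}\frac{q^{v-k+\delta+j}-1}{q^{\delta-1+j}-1}.
\]
(If one wants this self-contained, the same bound drops out of iterating the elementary ``project through a point'' inequality $\smax_q(n,2\delta;m)\leq\frac{q^{n}-1}{q^{m}-1}\,\smax_q(n-1,2\delta;m-1)$ down to $m=\delta$ and then using the partial-spread packing bound $\smax_q(n,2\delta;\delta)\leq\frac{q^{n}-1}{q^{\delta}-1}$.) The extreme case $k=\delta$ needs no work: then $\smax_q(v,2\delta;k-1)=1$, because two $(\delta-1)$-subspaces have distance at most $2(\delta-1)<2\delta$, and the lifted Gabidulin bound alone settles it.

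Dividing the two bounds, the power of $q$ comes out as $(v-k)(k-\delta+1)-(v-k+1)(k-\delta)=v-2k+\delta$ (write $a=v-k$, $b=k-\delta$; this is $a(b+1)-(a+1)b=a-b$), and estimating each factor of the product by $\frac{q^{v-k+\delta+j}-1}{q^{\delta-1+j}-1}<q^{v-k+1}\bigl(1-q^{-(\delta-1+j)}\bigr)^{-1}$ leaves
\[
  \frac{\smax_q(v,2\delta;k)}{\smax_q(v,2\delta;k-1)}>q^{v-2k+\delta}\prod_{m=\delta}^{k-1}\bigl(1-q^{-m}\bigr),
\]
so it remains to check $\prod_{m=\delta}^{k-1}(1-q^{-m})\geq 1-1/q$. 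Since $\delta\geq 2$, every $m$ in the range is $\geq 2$, so this product is at least $\prod_{m\geq 2}(1-q^{-m})>1-\sum_{m\geq 2}q^{-m}=1-\tfrac{1}{q(q-1)}\geq 1-\tfrac1q$, the last inequality being $q\geq 2$. This is the one genuinely delicate point: the cruder Singleton bound $\gauss{v-\delta+1}{k-\delta}{q}$ for the denominator is just short of producing the clean constant $1-1/q$, so one really has to use the ball-anticode bound, and then the deceptively trivial-looking product estimate has to be done exactly rather than asymptotically. The ``in particular'' statement then follows because under the hypotheses $q^{v-2k+\delta}C(q,\delta)\geq q$ (one checks $v-2k+\delta\geq 1$, and $q^{v-2k+\delta-1}(q-1)\geq q$ when $\delta\geq 2$), so the displayed strict inequality forces $\smax_q(v,2\delta;k)>q\cdot\smax_q(v,2\delta;k-1)$.

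Finally, applying the inequality just established for $k=\delta+1,\dots,\lfloor v/2\rfloor$ shows that $k\mapsto\smax_q(v,2\delta;k)$ is strictly increasing on $[\delta,\lfloor v/2\rfloor]$; by the symmetry $\smax_q(v,2\delta;k)=\smax_q(v,2\delta;v-k)$ of Lemma~\ref{obs_mixed_bounds}\eqref{smax:symm} it is then strictly decreasing on $[\lceil v/2\rceil,v-\delta]$, so the sequence $\bigl(\smax_q(v,2\delta;k)\bigr)_{k=\delta}^{v-\delta}$ is strictly unimodal.
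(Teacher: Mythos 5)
Your proposal is correct and follows essentially the same route as the paper: the lifted MRD code for the lower bound, the standard packing bound on the $(k-1)$-layer (your anticode bound $\gauss{v}{k-1}{q}/\gauss{v-k+\delta}{\delta-1}{q}$ is numerically identical to the paper's double-counting bound via $(k-\delta)$-dimensional subspaces), the same exponent computation $v-2k+\delta$, and the same final constant. The only cosmetic difference is that you reach $1-\tfrac{1}{q(q-1)}\geq 1-\tfrac{1}{q}$ via the elementary estimate $\prod_{m\geq 2}(1-q^{-m})>1-\sum_{m\geq 2}q^{-m}$, where the paper invokes Euler's Pentagonal Number Theorem for the same purpose.
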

Note that $C(q,\delta)$ is independent of $v,k$ and satisfies
$\lim_{q\to\infty}C(q,\delta)=1$. In fact our proof of the lemma will show that
for $\delta\geq 2$ the number
$C(q,\delta)=1-q^{-1}$ may be replaced by the larger quantity 
$\prod_{i=\delta}^\infty(1-q^{-i})$, which is even closer to $1$.
\begin{proof}
  First we consider the case $\delta=1$, in which the numbers 
  $\smax_q(v,2\delta;k)=\smax_q(v,2;k)=\gauss{v}{k}{q}$ are already known. 
  Here the assertion follows from
  \begin{equation*}
    \frac{\gauss{v}{k}{q}}{\gauss{v}{k-1}{q}}
    =\frac{q^{v-k+1}-1}{q^k-1}
    =q^{v-2k+1}\cdot\frac{1-q^{-(v-k+1)}}{1-q^{-k}}
    >q^{v-2k+1},
  \end{equation*}
  using $v-k+1>k$ for the last inequality.

  Now assume $\delta\geq 2$.
  The lifting construction produces
  $(v,q^{(k-\delta+1)(v-k)},2\delta;k)_q$ constant-dimension codes
    (``lifted MRD codes'') and gives the bound
    $\smax_q(v,d;k)\geq q^{(k-\delta+1)(v-k)}$, which is enough for our
      present purpose. On the other hand, every $(v,M,2\delta;k)_q$ code
      satisfies $\sdist(X,X')=2k-2\dim(X\cap X')\geq 2\delta$ or,
      equivalently, $\dim(X\cap X')\leq k-\delta$ for any two distinct
      codewords $X,X'\in\mathcal{C}$. This says that
      ($k-\delta+1$)-dimensional subspaces of $V$ are contained in at
      most one codeword of $\mathcal{C}$ and gives by double-counting
      the upper bound
      \begin{equation}
        \label{eq:schonheim}
        M\leq\frac{\gauss{v}{k-\delta+1}{q}}{\gauss{k}{k-\delta+1}{q}}
        =\frac{(q^v-1)(q^{v-1}-1)\dotsm(q^{v-(k-\delta)}-1)}
        {(q^k-1)(q^{k-1}-1)\dotsm(q^{\delta}-1)}
      \end{equation}
      for such codes. 
      Simplifying we get 
      $M<q^{(k-\delta+1)(v-k)}/C(q,\delta,k)$ with
      $C(q,\delta,k)=\prod_{i=\delta}^k(1-q^{-i})$. Replacing $k$ by
      $k-1$ turns this into an upper bound for
      $\smax_q(v,2\delta,k-1)$ and, together with the previously
      derived lower bound for $\smax_q(v,d;k)$, gives the estimate
      \begin{equation}
        \label{eq:quotient}
      \begin{aligned}
        \frac{\smax_q(v,2\delta;k)}{\smax_q(v,2\delta;k-1)}
        &>\frac{q^{(k-\delta+1)(v-k)}
          \cdot C(q,\delta,k)}{q^{(k-\delta)(v-k+1)}}
        =q^{v-2k+\delta}\cdot C(q,\delta,k)\\
        &>q^{v-2k+\delta}\cdot\prod_{i=\delta}^\infty(1-q^{-i})
          =q^{v-2k+\delta}\cdot
          \frac{\prod_{i=1}^\infty(1-q^{-i})}{\prod_{i=1}^{\delta-1}(1-q^{-i})}.
      \end{aligned}
      \end{equation}
      From Euler's Pentagonal Number Theorem
      (see e.g.\ \cite[Th.~15.5]{lint-wilson92}) we have
      \begin{align*}
        \prod_{i=1}^\infty(1-q^{-i})&=1+\sum_{m=1}^\infty(-1)^m\left(q^{-(3m^2-m)/2}
        +q^{-(3m^2+m)/2}\right)\\
        &=1-q^{-1}-q^{-2}+q^{-5}+q^{-7}-q^{-12}-q^{-15}\pm\dotsb\\
        &>1-q^{-1}-q^{-2}.
      \end{align*}
      Hence (and using
      $\delta\geq 2$), the quotient in \eqref{eq:quotient} is
      $>\frac{1-q^{-1}-q^{-2}}{1-q^{-1}}=\frac{q^2-q-1}{q^2-q}=1-\frac{1}{q(q-1)}
      \geq1-\frac{1}{q}$, as claimed.
      The remaining assertions of the lemma are clear.
\end{proof}

From the lemma, the numbers $\smax_q(v,d;k)$
grow fast as a function of $k$ in the range $0\leq k\leq
v/2$.  This implies that the following simple estimates yield quite a
good approximation to $\smax_q(v,d)$.

\begin{theorem}
  \label{thm:bounds}
  Suppose that for some parameters $q,v,d$ 
  we already know all of the numbers $\smax_q(v,d;k)$, $0\leq k\leq v$.
  Then
  \begin{equation*}
    \sum_{\substack{k=0\\k\equiv\lfloor v/2\rfloor\bmod d}}^v
      \smax_q\bigl(v,2\lceil d/2\rceil;k\bigr)
    \leq\smax_q(v,d)\leq 2+\sum_{k=\lceil d/2\rceil}^{v-\lceil d/2\rceil}\smax_q
    \bigl(v,2\lceil d/2\rceil;k\bigr),
  \end{equation*}
and this constitutes
the best bound for $\smax_q(v,d)$ that does not depend on
information about the cross-distance distribution between different
layers $\subspaces{V}{k}$ and $\subspaces{V}{l}$.
\end{theorem}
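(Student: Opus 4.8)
The plan is to establish the two displayed inequalities separately and then to address the optimality (``best bound'') assertion, which I expect to be the real obstacle. Write $\delta=\lceil d/2\rceil$, so $2\delta$ is the smallest even integer $\ge d$; since every constant-dimension code has even minimum distance, $\smax_q(v,d;k)=\smax_q(v,2\delta;k)$ for all $k$, and I use these interchangeably. For the upper bound, let $\mathcal{C}$ be a $q$-ary subspace code of packet length $v$ with $\sdist(\mathcal{C})\ge d$ and ambient space $V$, and split $[0,v]$ into the three consecutive blocks $[0,\delta-1]$, $[\delta,v-\delta]$, $[v-\delta+1,v]$; this is a genuine partition because $2\lceil d/2\rceil\le v+1$. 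If $X,Y\in\mathcal{C}$ both have dimension in $[0,\delta-1]$, then $\sdist(X,Y)=\dim X+\dim Y-2\dim(X\cap Y)\le 2(\delta-1)=2\lceil d/2\rceil-2<d$, which is impossible; hence $\mathcal{C}$ has at most one codeword of dimension $<\delta$, and dually --- now using $\sdist(X,Y)=2\dim(X+Y)-\dim X-\dim Y\le 2v-\dim X-\dim Y$ --- at most one codeword of dimension $>v-\delta$. Each middle layer contributes $\#\mathcal{C}_k\le\smax_q(v,d;k)=\smax_q(v,2\delta;k)$, and adding the three contributions yields $\#\mathcal{C}\le 2+\sum_{k=\delta}^{v-\delta}\smax_q(v,2\delta;k)$.

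For the lower bound, set $S=\{k\in[0,v]:k\equiv\lfloor v/2\rfloor\pmod{d}\}$; consecutive elements of $S$ differ by exactly $d$, so distinct $k,l\in S$ have $\abs{k-l}\ge d$. In each layer $k\in S$ fix a constant-dimension code $\mathcal{C}_k\subseteq\subspaces{V}{k}$ of minimum distance $\ge 2\delta$ and size $\smax_q(v,2\delta;k)$, and let $\mathcal{C}=\bigcup_{k\in S}\mathcal{C}_k$. Codewords sharing a layer are at distance $\ge 2\delta\ge d$, and codewords $X,Y$ in layers $k\ne l$ satisfy $\sdist(X,Y)=2\dim(X+Y)-\dim X-\dim Y\ge\abs{\dim X-\dim Y}=\abs{k-l}\ge d$. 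Thus $\mathcal{C}$ has minimum distance $\ge d$ and size $\sum_{k\in S}\smax_q(v,2\delta;k)$, giving the lower bound. The residue $\lfloor v/2\rfloor$ is picked so that $S$ is centred on the peak of the symmetric, unimodal sequence $k\mapsto\smax_q(v,2\delta;k)$ (Lemma~\ref{lma:unimodal} and Lemma~\ref{lma:smax}\eqref{smax:symm}); that this is the optimal such choice is part of the claim discussed next.

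It remains to argue that these are the best bounds obtainable without cross-distance information, which I read as two statements. (a) Any construction certifying distances between distinct layers only through $\sdist(X,Y)\ge\abs{\dim X-\dim Y}$ uses a layer set $S'\subseteq[0,v]$ with pairwise gaps $\ge d$, and so has size at most $\sum_{k\in S'}\smax_q(v,2\delta;k)$; one must show this is maximized by $S'=S$. (b) Any upper-bound argument using only the layer sizes together with $\abs{\dim X-\dim Y}\le\sdist(X,Y)\le\min\{\dim X+\dim Y,\,2v-\dim X-\dim Y\}$ cannot improve on $2+\sum_{k=\delta}^{v-\delta}\smax_q(v,2\delta;k)$, which I would verify by producing a family of that size compatible with exactly these constraints. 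The main obstacle is (a): it is an extremal-set optimization, which I would handle by an exchange/shifting argument driven by the \emph{quantitative} form of Lemma~\ref{lma:unimodal} (each interior $\smax_q(v,2\delta;k)$ exceeds $q$ times its neighbour), so that moving weight toward the centre always outweighs the gain from packing extra unit-weight layers near the ends --- the only delicate cases being the degenerate ones in which the central layer cannot hold two codewords at all (essentially $d=v$ with $v$ odd), where the extremal layer set is $\{0,v\}$ and the statement has to be read accordingly.
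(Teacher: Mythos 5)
Your two displayed inequalities are proved correctly and by essentially the paper's own arguments: the upper bound via the observation that the subspaces of dimension $<\lceil d/2\rceil$ and those of dimension $>v-\lceil d/2\rceil$ form sets of diameter $<d$ and hence carry at most one codeword each, and the lower bound by taking unions of optimal constant-dimension codes in the layers of one residue class modulo $d$, using $\sdist(X,Y)\ge\abs{\dim X-\dim Y}$ for the cross-distances.

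The genuine gap is the optimality assertion, which you yourself call ``the main obstacle'' and only sketch. In the paper this is the bulk of the proof: writing $\sigma_r$ for the sum of the numbers $\smax_q(v,2\delta;k)$ over $k\equiv r\bmod d$, one uses $d$-periodicity and the symmetry $\sigma_r=\sigma_{v-r}$ (Lemma~\ref{lma:smax}\eqref{smax:symm}) to reduce the claim to $\sigma_r>\sigma_{r-1}$ for $\lceil(v-d)/2\rceil+1\le r\le\lfloor v/2\rfloor$, and then pairs terms so that the inequality to be verified is $\smax_q(v,2\delta;r-td)>\smax_q(v,2\delta;r-1-td)+\smax_q\bigl(v,2\delta;v-r+1-(t+1)d\bigr)$. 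Note that shifting the residue class down by one does not merely shrink each term; it also lets a reflected term re-enter at the far end of $[0,v]$, so a single near-central value must beat the \emph{sum} of two others. This needs the quantitative content of Lemma~\ref{lma:unimodal} in both forms, $\smax_q(v,2\delta;r-td)>q\cdot\smax_q(v,2\delta;r-1-td)$ and $\smax_q(v,2\delta;r-td)\ge q^{2r-v-1+d}\cdot\smax_q\bigl(v,2\delta;v-r+1-(t+1)d\bigr)$, together with $2r-v-1+d\ge d-1\ge 1$ and $q\ge 2$; plain unimodality plus ``moving weight toward the centre'' is not enough, and your proposed exchange/shifting argument is never carried out. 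Moreover, your reading of ``best bound'' is broader than what the paper proves (arbitrary layer sets with gaps $\ge d$, plus a separate optimality claim for the upper bound that you would certify by constructing a code of that size); the paper only establishes optimality of the residue class through $\lfloor v/2\rfloor$ among the bounds $\sigma_r$, so under your reading the unproven part is even larger. Your remark about the degenerate case $d=v$ odd is a fair catch---there every layer carries a single codeword and the class $\{0,v\}$ beats the central one, a case the paper tacitly excludes along with $d=1$---but noticing it does not substitute for the missing computation.
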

\begin{proof}
  First note that $2\delta$, where
  $\delta=\lceil d/2\rceil$, is the smallest even integer $\geq d$
  and hence $\smax_q(v,d;k)=\smax_q(v,2\delta;k)$. Now the upper
  bound follows from the observation that the two
  (isomorphic) metric spaces consisting of
  all subspaces of $V$ of dimension $<\delta$
  (respectively, $>v-\delta$) have diameter $<d$ and thus contain
  at most one codeword of any $(v,M,d)_q$ code.

  The lower bound follows from the inequality
  $\sdist(X,Y)\geq\abs{\dim(X)-\dim(Y)}$ and remains valid if we replace
  $\lceil v/2\rceil$ by an arbitrary integer $r$. In order to show that
  the lower bound is maximized for $r=\lceil d/2\rceil$, let
  $\sigma_r$ denote the sum of all numbers
  $\smax_q(v,2\delta;k)$ with $k\in[0,v]$ and
  $k\equiv r\bmod d$. Since $\sigma_r$ is $d$-periodic and satisfies
  $\sigma_r=\sigma_{v-r}$ for $0\leq r\leq v$, it suffices to show 
  $\sigma_r>\sigma_{r-1}$ for $\left\lceil(v-d)/2\right\rceil+1\leq r
  \leq\left\lfloor v/2\right\rfloor$. 

  For $r$ in the indicated range, $\sigma_r-\sigma_{r-1}$ is a sum of terms
  of the form
  \begin{multline*}
    \smax_q(v,2\delta;r-td)-\smax_q(v,2\delta;r-1-td)
    +\smax_q\bigl(v,2\delta;r+(t+1)d\bigr)-\smax_q\bigl(v,2\delta;r-1
    +(t+1)d\bigr)=\\
    =\smax_q(v,2\delta;r-td)-\smax_q(v,2\delta;r-1-td)
    -\smax_q\bigl(v,2\delta;v-r+1-(t+1)d\bigr)
    +\smax_q\bigl(v,2\delta;r+(t+1)d\bigr),
  \end{multline*}
  where $0\leq t\leq\lfloor r/d\rfloor$ and the convention
  $\smax_q(v,2\delta;k)=0$ for $k\notin[0,v]$ has been used. From
  Lemma~\ref{lma:unimodal} we have
  \begin{equation*}
    \smax_q(v,2\delta;r-td)
    \begin{cases}
      >q\cdot\smax_q(v,2\delta;r-1-td),\\
      \geq q^{2r-v-1+d}\cdot
      \smax_q\bigl(v,2\delta;v-r+1-(t+1)d\bigr),
    \end{cases}
  \end{equation*}
  and $2r-v-1+d\geq d-1\geq 1$.\footnote{Of course this implies that
    the second inequality above is also strict. The trivial case $d=1$ has
    been tacitly excluded.} From this (and $q\geq 2$) we can certainly
  conclude that
  $\smax_q(v,2\delta;r-td)
  >\smax_q(v,2\delta;r-1-td)+\smax_q\bigl(v,2\delta;v-r+1-(t+1)d\bigr)$,
  so that $\sigma_r-\sigma_{r-1}$ is positive, as claimed.
\end{proof}

\subsection{Shortening and Puncturing Subspace Codes}
\label{ssec:shortpunct}

In \cite{koetter-kschischang08,etzion2013problems} two different 
constructions of $(v-1,M',d')_q$ subspace
codes from $(v,M,d)_q$ subspace codes were defined and both referred to as
``puncturing subspace codes''. Whereas the construction in
\cite{koetter-kschischang08} usually has $M'=M$ (as is the case for
puncturing block codes), the construction in \cite{etzion2013problems}
satisfies $M'<M$ apart from trivial cases and behaves very much like
the shortening construction for block codes. For this reason, we
propose to change its name to ``shortening subspace codes''. We will now
give a simple, coordinate-free definition of the shortening
construction and generalize the puncturing construction
of \cite{koetter-kschischang08} to incorporate simultaneous
point-hyperplane puncturing.

\begin{definition}
  \label{def:shortening}
  Let $\mathcal{C}$ be a subspace code with ambient space $V$, $H$ a
  hyperplane and $P$ a point of $\PG(V)$. The \emph{shortened codes} of
  $\mathcal{C}$ in $H$, $P$ and the pair $P,H$ are defined as
  \begin{align*}
    \mathcal{C}|_{H}&=\{X\in\mathcal{C};X\subseteq H\},\\
    \mathcal{C}|^{P}&=\{X/P;X\in\mathcal{C},P\subseteq X\},\\
    \mathcal{C}|_H^P&=\{X\in\mathcal{C};X\subseteq H\}\cup\{Y\cap
    H;Y\in\mathcal{C},P\subseteq Y\}\\
                    &=\mathcal{C}|_{H}\cup\{Y\cap H;Y\in\mathcal{C}|_P\}
  \end{align*}
   with ambient spaces $H$, $V/P$ and $H$, respectively.
\end{definition}
Note that the operations $\mathcal{C}\mapsto\mathcal{C}|^P$ and
$\mathcal{C}\mapsto\mathcal{C}|_H$ are dual to each other in the sense
that they are switched by the polarity $\pi$. Simultaneous
point-hyperplane shortening $\mathcal{C}\mapsto\mathcal{C}|_H^P$ glues
these parts together by means of the projection map
$X\mapsto(X+P)\cap H$. The puncturing construction in \cite{etzion2013problems}
is equivalent to $\mathcal{C}\mapsto\mathcal{C}|_H^P$ with the
additional assumption that $P$ and $H$ are not incident. This
assumption implies $\mathcal{C}|^P\cap\mathcal{C}|_H=\emptyset$ and
that $X\mapsto(X+P)\cap H$ maps $\mathcal{C}|^P$ isomorphically onto
the subspace code $\{Y\cap H;Y\in\mathcal{C}|^P\}$.\footnote{Moreover,
  one can show that in this case
  $\mathcal{C}|_H^P\cong\{X+P;X\in\mathcal{C}|_H\}\cup\mathcal{C}|^P$,
  the analogous point-hyperplane shortening using
  $X\mapsto(X\cap H)+P$ instead.}
Shortening in point-hyperplane pairs $(P,H)$ with $P\subseteq H$
seems of little value and will not be considered further in this paper.
\begin{definition}
  \label{def:puncturing}
  Let $\mathcal{C}$ be a subspace code with ambient space $V$, $H$ a
  hyperplane and $P$ a point of $\PG(V)$. 
  The \emph{punctured codes} of
  $\mathcal{C}$ in $H$, $P$ are defined as
  \begin{align*}
    \mathcal{C}_H&=\{X\cap H;X\in\mathcal{C}\},\\
    \mathcal{C}^P&=\bigl\{(X+P)/P;X\in\mathcal{C}\bigr\}
  \end{align*}
  with ambient spaces $H$ and $V/P$, respectively. Moreover, the
  punctured code of $\mathcal{C}$ in $P,H$ with respect to a splitting
  $\mathcal{C}=\mathcal{C}_1\uplus\mathcal{C}_2$ is defined as
  \begin{equation*}
    \mathcal{C}_H^P=(\mathcal{C}_1,\mathcal{C}_2)_H^P=\{X\cap
    H;X\in\mathcal{C}_1\}\cup\{(Y+P)\cap
      H;Y\in\mathcal{C}_2\}
  \end{equation*}
  with ambient space $H$.
\end{definition}
Here mutatis mutandis
the same remarks as on the shortening constructions apply. The original
puncturing operation in \cite{koetter-kschischang08} is
$\mathcal{C}\to\mathcal{C}_H$, with attention restricted to
constant-dimension codes and the following modification: If
$\mathcal{C}$ has constant-dimension $k$ then $\mathcal{C}_H$ can be
turned into a code of constant-dimension $k-1$ by replacing each
subspace $X\in\mathcal{C}$ with $X\cap H=X$ (i.e.\ $X\subseteq H$) by
some ($k-1$)-dimensional subspace contained in $X$. Simultaneous
point-hyperplane puncturing has been defined to round off the
construction principles and will not be used in later sections.

The next lemma provides general information about the parameters of
shortened and punctured subspace codes. The lemma makes reference to
the \emph{degree} of a point $P$ or a hyperplane $H$ with respect to a subspace
code $\mathcal{C}$, which are defined as
$\deg(P)=\{X\in\mathcal{C};P\subseteq X\}=\#\left(\mathcal{C}|^P\right)$
and dually as $\deg(H)=\{X\in\mathcal{C};X\subseteq H\}
=\#\left(\mathcal{C}|_H\right)$,
respectively.\footnote{Incidences with the trivial spaces $\{0\}$, $V$
  (if they are in $\mathcal{C}$) are thus not counted.}
\begin{lemma}
  \label{lma:shortpunct}
  Let $\mathcal{C}$ be a $(v,M,d)_q$ subspace code with ambient space
  $V$ and $(P,H)$ a non-incident point-hyperplane pair in $\PG(V)$.
  \begin{enumerate}[(i)]
  \item\label{l:short} If $d\geq 2$ then the shortened code
    $\mathcal{C}|_H^P$ has parameters $(v-1,M',d')_q$ with
    $M'=\deg(P)+\deg(H)$ and $d'\geq d-1$.
  \item\label{l:punct} If $d\geq 3$ then the punctured codes
    $\mathcal{C}_H$, $\mathcal{C}^P$ have parameters $(v-1,M,d')$ with
    $d'\geq d-2$. The same is true of the punctured code
    $(\mathcal{C}_1,\mathcal{C}_2)_H^P$ with
    respect to any splitting
    $\mathcal{C}=\mathcal{C}_1\uplus\mathcal{C}_2$ satisfying
    $\sdist(\mathcal{C}_1,\mathcal{C}_2)\geq d+1$.
  \end{enumerate}
\end{lemma}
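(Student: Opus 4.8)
The strategy is to handle each construction separately, verifying in each case that (a) the ambient space has dimension $v-1$, (b) the number of codewords behaves as claimed, and (c) the minimum distance does not drop by more than the stated amount. For part~\eqref{l:short}, the counting assertion $M'=\deg(P)+\deg(H)$ follows directly from the definition of $\mathcal{C}|_H^P$ as $\mathcal{C}|_H\cup\{Y\cap H;Y\in\mathcal{C}|^P\}$ together with the remarks already made after Definition~\ref{def:shortening}: since $P\not\subseteq H$, the two pieces are disjoint and $Y\mapsto Y\cap H$ is injective on $\mathcal{C}|^P$ (as $Y=(Y\cap H)+P$ recovers $Y$). For the distance bound, I would take two distinct codewords of $\mathcal{C}|_H^P$, say $X'=(X+P)\cap H$ and $Y'=(Y+P)\cap H$ coming from $X,Y\in\mathcal{C}|_H\cup\mathcal{C}|^P$ (where for $Z\in\mathcal{C}|_H$ we have $(Z+P)\cap H=Z$), and show $\sdist(X',Y')\ge\sdist(X,Y)-1\ge d-1$. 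The key observation is that $X\mapsto (X+P)\cap H$ is the composition of adding the fixed point $P$ (a map that changes dimensions by at most $1$ and distances by at most $2$) and intersecting with the fixed hyperplane $H$; a cleaner route is to note $(X+P)\cap H$ and $X$ differ by at most one "step" in the subspace lattice, so $\sdist(X,(X+P)\cap H)\le 1$, and then apply the triangle inequality twice: $\sdist(X',Y')\ge\sdist(X,Y)-\sdist(X,X')-\sdist(Y,Y')\ge d-1-1$, which is too lossy. So instead I would argue directly: for each codeword the image $X'$ satisfies $X'\subseteq X$ or $X\subseteq X'$ with the dimensions differing by at most~$1$, hence $\sdist(X',Y')\ge\sdist(X,Y)-1$ by a short case analysis on the four combinations of ``$X'$ below/above $X$'' and ``$Y'$ below/above $Y$'', using $\dim(X'\cap Y')\ge\dim(X\cap Y)-1$ in all cases.

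For part~\eqref{l:punct}, the codes $\mathcal{C}_H=\{X\cap H\}$ and $\mathcal{C}^P=\{(X+P)/P\}$ each have exactly $M$ codewords provided the defining maps are injective on $\mathcal{C}$; injectivity is where the hypothesis $d\ge 3$ enters. Indeed if $X\cap H=Y\cap H$ for distinct codewords then $\sdist(X,Y)\le 2$ (both $X,Y$ lie between $X\cap H$ and $(X\cap H)+P$, which differ by one dimension, forcing $\sdist(X,Y)\le 2$), contradicting $d\ge 3$; dually for $\mathcal{C}^P$. For the distance bound, intersecting with the fixed hyperplane $H$ changes each subspace $X$ to a subspace $X\cap H$ with $X\cap H\subseteq X$ and $\dim(X\cap H)\ge\dim X-1$; feeding two such codewords into the triangle-type estimate gives $\sdist(X\cap H,Y\cap H)\ge\sdist(X,Y)-2\ge d-2$, via $\dim(X\cap H\cap Y\cap H)\ge\dim(X\cap Y)-1$ combined with the dimension drops. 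The map $X\mapsto(X+P)/P$ is handled dually (it is $\pi$-conjugate to $X\mapsto X\cap H$), or by the same direct computation using $\dim((X+P)/P)\in\{\dim X-1,\dim X\}$.

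The mixed case $(\mathcal{C}_1,\mathcal{C}_2)_H^P$ is the one requiring the extra hypothesis $\sdist(\mathcal{C}_1,\mathcal{C}_2)\ge d+1$, and this is the main obstacle. The code is $\{X\cap H;X\in\mathcal{C}_1\}\cup\{(Y+P)\cap H;Y\in\mathcal{C}_2\}$. For two codewords both from $\mathcal{C}_1$, or both from $\mathcal{C}_2$, the estimate above gives distance $\ge d-2$ (note $\sdist(\mathcal{C}_i)\ge\sdist(\mathcal{C})=d$ since $\mathcal{C}_i\subseteq\mathcal{C}$). The delicate case is one codeword $X\cap H$ with $X\in\mathcal{C}_1$ and one codeword $(Y+P)\cap H$ with $Y\in\mathcal{C}_2$: here $X\cap H$ is obtained from $X$ by a downward step and $(Y+P)\cap H$ from $Y$ by first an upward step (adding $P$) then a downward step (meeting $H$), so the total displacement is up to $3$ in subspace distance, and we only know $\sdist(X,Y)\ge d+1$. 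The plan is to show $\sdist(X\cap H,(Y+P)\cap H)\ge\sdist(X,Y)-3\ge d-2$ by carefully bounding $\dim(X\cap H\cap(Y+P)\cap H)$ from above and the join dimension correspondingly; concretely, $X\cap H\cap(Y+P)\cap H=X\cap(Y+P)\cap H\supseteq (X\cap Y)$ intersected with $H$, and since adding $P$ to $Y$ increases intersection with $X$ by at most $1$ and then meeting with $H$ decreases by at most $1$, one gets $\dim(X\cap H\cap(Y+P)\cap H)\le\dim(X\cap Y)+1$, while the dimensions of the two codewords are at least $\dim X-1$ and $\dim Y-1$; substituting into $\sdist = \dim(\cdot)+\dim(\cdot)-2\dim(\cap)$ yields the drop of at most $3$. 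I would also check the ``$\ge d'$'' here is $\ge(d+1)-3=d-2$, matching the claim, and note that replacing $d+1$ by $d$ in the hypothesis would only guarantee $d-3$, which explains why the stronger separation is needed. The elementary lattice inequalities $\dim(A\cap B)\ge\dim A+\dim B-\dim(A\vee B)$ and $\dim(A+P)\le\dim A+1$ are the only tools required throughout; assembling them into the four or five cases is routine once the right inequality chain is identified.
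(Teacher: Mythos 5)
Your overall plan (direct dimension bookkeeping, case analysis over the types of codewords) is in the same spirit as the paper's proof, but two of your key steps do not give the claimed bounds. In part~(i), the inequality you propose to use, $\dim(X'\cap Y')\ge\dim(X\cap Y)-1$, points the wrong way: to bound $\sdist(X',Y')=\dim X'+\dim Y'-2\dim(X'\cap Y')$ from below you need an \emph{upper} bound on the meet. More importantly, in the case where both codewords come from $\mathcal{C}|^P$ (both contain $P$ and both get replaced by their intersection with $H$), worst-case bookkeeping ``each dimension drops by one, the meet drops by at most one'' only yields $\sdist(X',Y')\ge\sdist(X,Y)-2\ge d-2$, which is weaker than the required $d-1$. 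The missing observation is that here $P\subseteq X\cap Y$ while $P\nsubseteq H$, so the meet drops by \emph{exactly} one and the distance is preserved; equivalently, $Y\mapsto Y\cap H$ is an isometry on the subspaces through $P$ (this is exactly how the paper argues, reducing part~(i) to the cross-distances between $\mathcal{C}|_H$ and $\{Y\cap H;Y\in\mathcal{C}|^P\}$, which drop by exactly one). A small further point: the disjointness of the two pieces of $\mathcal{C}|_H^P$, needed for $M'=\deg(P)+\deg(H)$, uses $d\ge 2$ and not merely $P\nsubseteq H$ (without $d\ge 2$ one could have $X\in\mathcal{C}|_H$ and $Y\supseteq P$ with $Y\cap H=X$).

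In part~(ii) the mixed case has the same kind of shortfall, and there it is fatal to the stated bound: substituting $\dim(X\cap H)\ge\dim X-1$, $\dim\bigl((Y+P)\cap H\bigr)\ge\dim Y-1$ and $\dim\bigl(X\cap(Y+P)\cap H\bigr)\le\dim(X\cap Y)+1$ into the distance formula gives $\sdist\ge\sdist(X,Y)-4\ge d-3$, not the claimed drop of at most $3$. To recover $d-2$ you must exploit that the two worst cases cannot occur together: the meet can exceed $\dim(X\cap Y)$ only if $P\nsubseteq Y$, and then $\dim\bigl((Y+P)\cap H\bigr)=\dim Y$; whereas if $P\subseteq Y$ then $(Y+P)\cap H=Y\cap H$ and the meet is at most $\dim(X\cap Y)$. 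Alternatively (the paper's route), note that the four numbers $\sdist(X,Y)$, $\sdist(X,Y+P)$, $\sdist\bigl(X,(Y+P)\cap H\bigr)$, $\sdist\bigl(X\cap H,(Y+P)\cap H\bigr)$ successively differ by at most one, since each step replaces one argument by a subspace at subspace distance at most one, so the total drop is at most $3$ and $d'\ge(d+1)-3=d-2$. The remainder of your part~(ii) is essentially sound: injectivity of $X\mapsto X\cap H$ and $X\mapsto(X+P)/P$ on $\mathcal{C}$ does follow from $d\ge 3$, although your parenthetical reason is off ($X$ need not lie below $(X\cap H)+P$; rather $X\cap H\subseteq X$ with $\dim X\le\dim(X\cap H)+1$, and likewise for $Y$, already forces $\sdist(X,Y)\le 2$), and the $d-2$ bound for $\mathcal{C}_H$, $\mathcal{C}^P$ needs only the trivial estimate $\dim(X\cap Y\cap H)\le\dim(X\cap Y)$, not the inequality you cite.
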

The strong bound $d'\geq d-1$ in Part~\eqref{l:short} 
of the lemma accounts for the significance of
the shortening construction, as mentioned in \cite{etzion2013problems}. 

The usefulness of the bounds in Part~\eqref{l:punct}, which are
weaker, is less clear.  The first assertion in \eqref{l:punct} was
already observed in \cite{koetter-kschischang08} (for the code
$\mathcal{C}_H$). The condition
$\sdist(\mathcal{C}_1,\mathcal{C}_2)\geq d+1$ in the second
assertion is required, since cross-distances can decrease by $3$
during puncturing. Alternatively we could have assumed $d\geq 4$ and
replaced $d'\geq d-2$ by $d'\geq d-3$ in the conclusion.
\begin{proof}[Proof of the lemma]
  \eqref{l:short} 
  Since $P\nsubseteq H$, the codes $\mathcal{C}|_H$ and $\mathcal{C}|^P$
  are disjoint, and since $d\geq 2$, the same is true of
  $\mathcal{C}|_H$ and $\{Y\cap H;Y\in\mathcal{C}|^P\}$.
  Hence we have $\#\mathcal{C}|_H^P
  =\#\left(\mathcal{C}|_H\right)
  +\#\left(\mathcal{C}|^P\right)=\deg(H)+\deg(P)$. 

  Since $Y\mapsto
  Y\cap H$ defines an isometry from $\PG(V/P)$ onto $\PG(H)$, we need
  only check ``cross-distances'' $\sdist(X,Y\cap H)$ with
  $X\in\mathcal{C}|_H$, $Y\in\mathcal{C}|^P$. In this case we have
  \begin{align*}
    \sdist(X,Y\cap H)&=\dim(X)+\dim(Y\cap H)-2\dim(X\cap Y\cap H)\\
    &=\dim(X)+\dim(Y)-1-2\dim(X\cap Y)\\
    &=\sdist(X,Y)-1,
  \end{align*}
  and the assertion regarding $d'$ follows.

  \eqref{l:punct} 
  As in the proof of \eqref{l:short} one shows
  \begin{equation*}
    \sdist(X+P,Y+P)\in
    \begin{cases}
      \{\sdist(X,Y),\sdist(X,Y)-2\}&\text{if $P\nsubseteq X\wedge
        P\nsubseteq Y$}\\
      \{\sdist(X,Y)+1,\sdist(X,Y)-1\}&\text{if $P\nsubseteq X\wedge
        P\subseteq Y$}.
    \end{cases}
  \end{equation*}
  This implies the assertion about $\mathcal{C}^P$, and that about
  $\mathcal{C}_H$ follows by duality. For the last assertion we need
  only check cross-distances $\sdist\bigl(X\cap H,(Y+P)\cap H\bigr)$
  with $X\in\mathcal{C}_1$, $Y\in\mathcal{C}_2$. As shown above, the
  numbers $\sdist(X,Y)$, $\sdist(X,Y+P)$, $\sdist\bigl(X,(Y+P)\cap
  H\bigr)$, $\sdist\bigl(X\cap H,(Y+P)\cap
  H\bigr)$ successively differ by at most one. Hence
  $\sdist\bigl(X\cap H,(Y+P)\cap H\bigr)\geq\sdist(X,Y)-3$, and the
  result follows.\footnote{The distance actually drops by $3$ in the case
    $P\subseteq X+Y\wedge X\cap(P+Y)\subseteq H$, and nontrivial
    examples of $P,H,X,Y$ that satisfy these conditions are easily found.}
\end{proof}



\subsection{A Property of the Lifted Gabidulin Codes}
\label{ssec:gabidulin}

A $q$-ary lifted Gabidulin code $\gabidulin=\gabidulin_{v,k,\delta}$
has parameters $(v,q^{(k-\delta+1)(v-k)},2\delta;k)$, where
$1\leq\delta\leq k\leq v/2$, and can be defined in a coordinate-free
manner as follows (see\ e.g.\ \cite[Sect.~2.5]{smt:fq11proc}): The
ambient space is taken as $V=W\times\F_{q^n}$, where $n=v-k$ and $W$
denotes a fixed $k$-dimensional $\F_q$-subspace of $\F_{q^n}$, and
$\gabidulin$ consists of all subspaces
\begin{equation*}
  G(a_0,\dots,a_{k-\delta})
  =\bigl\{(x,a_0x+a_1x^q+a_2x^{q^2}+\dots+a_{k-\delta}x^{q^{k-\delta}});x\in W\bigr\}
\end{equation*}
with $a_i\in\F_{q^n}$. In other words, $\gabidulin$ consists of the
graphs $\graph_f=\bigl\{(x,f(x));x\in W\bigr\}$ of all
$\F_q$-linear maps $f\colon W\to\F_{q^n}$ that are represented by a
linearized polynomial of symbolic degree at most $k-\delta$.\footnote{Recall
  that every $\F_q$-linear endomorphism of $\F_{q^n}$ is represented
  by a unique linearized polynomial of symbolic degree $\leq
  n-1$. Restriction to $W$ then gives a canonical representation of
  $\F_q$-linear maps $f\colon W\to\F_{q^n}$ by linearized polynomials
  of symbolic degree $\leq k-1$.}

The code $\gabidulin$ forms a geometrically quite regular object. The
most significant property, shared by all lifted MRD codes with the
same parameters, is that $\gabidulin$ forms an exact $1$-cover of the
set of all ($k-\delta$)-flats of $\PG(V)$ that are disjoint from the
special flat $S=\{0\}\times\F_{q^n}$.\footnote{Since the codewords of
  $\gabidulin$ are disjoint from $S$ (since they are graphs of linear
  maps), it is clear that only flats disjoint from $S$ are
  covered. The exact cover property is a consequence of Delsarte's
  characterization of MRD codes (cf.\ Footnote~\ref{fn:delsarte}) and
  is proved in \cite[Lemma~6]{smt:fq11proc}, for example.}  A further
regularity property, which we will need later, is that every point
$P\notin S$ has degree $q^{(k-\delta)(v-k)}$ with respect to
$\gabidulin$. Indeed, $P=\F_q(a,b)\in\graph_f$ if and only if $f(a)=b$
(using $a\neq 0$), which reduces $f$ to a linear map on a
($k-1$)-dimensional subspace of $W$.\footnote{\label{fn:delsarte}Here
  the following property of Gabidulin codes (or lifted MRD codes in
  general) due to Delsarte~\cite{delsarte78a} simplifies the view
  considerably: Every $\F_q$-linear map $g\colon U\to\F_{q^n}$,
  defined on an arbitrary ($k-\delta+1$)-dimensional subspace $U$ of
  $W$, extends uniquely to a linear map $f\in\gabidulin$. This
  property also gives $\#\gabidulin$ immediately.}

From now on we assume that $n=k$ (or $v=2k$, the ``square'' case) and
hence $W=\F_{q^k}$. In this case the codes
$\gabidulin=\gabidulin_{2k,k,\delta}$, $1\leq\delta\leq k$, are 
invariant under a correlation of $\PG(\F_{q^k}\times\F_{q^k})$ fixing
$S$, as our next theorem shows. In particular, every hyperplane $H$ of
$\PG(V)$ with $H\nsupseteq S$ contains, dually so-to-speak, precisely
$q^{(k-\delta)(v-k)}$ codewords of $\gabidulin$, This property will be
needed later in Section~\ref{ssec:d=v-2}. Before stating
the theorem, let us remark that a non-degenerate bilinear form on
$V=\F_{q^k}\times\F_{q^k}$ is given by
$\bigl\langle(a,b),(x,y)\bigr\rangle=\trace(ax+by)$, where
$\trace(x)=\trace_{\F_{q^k}/\F_q}(x)=x+x^q+\dots+x^{q^{k-1}}$ is the
trace of the field extension $\F_{q^k}/\F_q$. The symbol $\perp$
will denote orthogonality with respect to this bilinear form.  Hence
hyperplanes of $\PG(V)$ have the form
$H_{a,b}=\bigl\{(x,y)\in V;\trace(ax+by)=0\bigr\}=\F_q(a,b)^\perp$
for a unique point $\F_q(a,b)$ of $\PG(V)$.

\begin{theorem}
  \label{thm:gabidual}
  The $q$-ary Gabidulin codes $\gabidulin=\gabidulin_{2k,k,\delta}$ are
  linearly isomorphic to their duals
  $\gabidulin^\perp=\{X^\perp;X\in\gabidulin\}$ and hence invariant
  under a correlation of $\PG(v-1,\F_q)$. Any correlation
  $\kappa$ fixing $\gabidulin$ fixes also $S$.
\end{theorem}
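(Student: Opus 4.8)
The plan is to exhibit an explicit correlation $\kappa$ of $\PG(V)$ that carries $\gabidulin$ into $\gabidulin^\perp$ and then to postcompose with a linear collineation sending $\gabidulin^\perp$ back to $\gabidulin$, so that the composition is a correlation fixing $\gabidulin$ setwise. The natural candidate for the first step is the polarity $\pi\colon X\mapsto X^\perp$ associated with the bilinear form $\langle(a,b),(x,y)\rangle=\trace(ax+by)$ introduced just before the statement; by definition $\pi(\gabidulin)=\gabidulin^\perp$, so everything reduces to showing $\gabidulin^\perp$ is linearly isomorphic to $\gabidulin$. To compute $\gabidulin^\perp$ I would take a codeword $G(a_0,\dots,a_{k-\delta})=\{(x,f(x));x\in\F_{q^k}\}$ with $f(x)=\sum_{i=0}^{k-\delta}a_ix^{q^i}$ and determine all $(u,v)$ with $\trace(ux+vf(x))=0$ for every $x\in\F_{q^k}$. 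Using the $\F_q$-bilinearity of the trace form and the adjoint (transpose) of a linearized polynomial with respect to the trace form — namely that the adjoint of $x\mapsto\sum c_ix^{q^i}$ is $x\mapsto\sum c_i^{q^{k-i}}x^{q^{k-i}}$ — one finds that the condition becomes $u=-\hat f(v)$, where $\hat f$ is the adjoint of $f$. Hence $G(a_0,\dots,a_{k-\delta})^\perp$ is again a graph, this time of $-\hat f$ but with the two coordinates swapped: it is $\{(-\hat f(y),y);y\in\F_{q^k}\}$. Since $\hat f$ is again a linearized polynomial of symbolic degree at most $k-\delta$ (the exponents $q^{k-i}$ for $i\le k-\delta$ all lie in the admissible ``top'' range, but after the coordinate swap they become ``bottom'' exponents again, i.e.\ degrees $\le k-\delta$ relative to the swapped variable), the set $\gabidulin^\perp$ is precisely the image of $\gabidulin$ under the linear map $\sigma\colon(x,y)\mapsto(-y,x)$ (or $(x,y)\mapsto(y,x)$, the sign being irrelevant projectively). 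Therefore $\kappa=\sigma^{-1}\circ\pi$ is a correlation with $\kappa(\gabidulin)=\gabidulin$, proving the first two assertions.

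For the last assertion, let $\kappa$ be \emph{any} correlation fixing $\gabidulin$; I want to show $\kappa(S)=S$, where $S=\{0\}\times\F_{q^k}$. The key is the degree/covering regularity of $\gabidulin$ recalled in Section~\ref{ssec:gabidulin}: every point $P\notin S$ has degree $q^{(k-\delta)(v-k)}=q^{k(k-\delta)}$ with respect to $\gabidulin$, while a point $P\in S$ has degree $0$ (codewords are graphs of linear maps, hence disjoint from $S$). Dually, by the already-established existence of a correlation fixing $\gabidulin$ (or directly, since $\gabidulin^\perp=\sigma(\gabidulin)$ and $\sigma$ fixes $S$), every hyperplane $H\nsupseteq S$ contains exactly $q^{k(k-\delta)}$ codewords, whereas — and this is the point to check — the hyperplanes $H\supseteq S$ contain a \emph{different} number of codewords. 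Concretely, $S$ itself is a $k$-dimensional subspace, and a hyperplane $H$ with $S\subseteq H$ is of the form $H=U\times\F_{q^k}$ for some hyperplane $U$ of $W=\F_{q^k}$; a codeword $\graph_f$ lies in $H$ iff $f$ vanishes on... no — iff the first coordinate $x$ always lies in $U$, which is impossible since $x$ ranges over all of $W$. Hence \emph{no} codeword of $\gabidulin$ is contained in a hyperplane through $S$: such hyperplanes have degree $0$. Since $q^{k(k-\delta)}>0$, the hyperplanes of degree $0$ are exactly those containing $S$, and the points of degree $0$ are exactly those contained in $S$. A correlation $\kappa$ maps points to hyperplanes preserving the subspace code $\gabidulin$, hence preserving degrees; so $\kappa$ maps the set of degree-$0$ points (= points of $S$) onto the set of degree-$0$ hyperplanes (= hyperplanes containing $S$). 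Thus $\bigcap\{\kappa(P):P\subseteq S\}=S$... more carefully: $\kappa$ restricted to $\PG(S)$ gives a bijection onto the hyperplanes through $S$; dualizing, $\kappa^{-1}$ sends the hyperplanes through $S$ to the points of $S$, and the common intersection of all hyperplanes through $S$ is $S$ itself, so the image $S'=\kappa(S)$ — which is a $k$-dimensional subspace since correlations reverse dimensions and $S$ has dimension $k=v-k$ — must satisfy: a point $P$ lies in $S'$ iff the hyperplane $\kappa^{-1}(P)$ contains... iff $P\in\kappa(S)$. Working this through, $\kappa(S)$ is forced to be the intersection of all hyperplanes $\kappa(P)$ with $P\subseteq S$, which is $S$. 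Hence $\kappa(S)=S$.

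The main obstacle I anticipate is the computation of $\gabidulin^\perp$ in the first part — specifically, getting the bookkeeping of linearized-polynomial adjoints and the coordinate swap exactly right so as to confirm that the perp of a Gabidulin codeword is again a Gabidulin-type graph (after swapping coordinates) with symbolic degree still bounded by $k-\delta$. This hinges on the ``square'' hypothesis $v=2k$: the adjoint exponents $q^{k-i}$ with $0\le i\le k-\delta$ occupy the range $\delta,\delta+1,\dots,k$, and after renaming the swapped variable these become the top-degree terms, which under a further reflection $j\mapsto k-j$ of the exponent lattice land back in $\{0,\dots,k-\delta\}$; it is worth double-checking that no ``wrap-around'' (i.e.\ reduction modulo $x^{q^k}-x$) is needed and that the count $k-\delta+1$ of free parameters is preserved. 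The degree-regularity facts needed in the second part are all either quoted from Section~\ref{ssec:gabidulin} or immediate, so that part should be routine once the hyperplane-degree statement ``$H\supseteq S\Rightarrow\deg(H)=0$'' is observed.
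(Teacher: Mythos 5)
Your overall architecture (compute $\gabidulin^\perp$ via the trace form, exhibit a linear collineation carrying it back to $\gabidulin$, compose to get a correlation fixing $\gabidulin$) is exactly the paper's, but the central computation contains a genuine error. From $\trace(ux+vf(x))=0$ for all $x$ you correctly get $u=-\hat f(v)$ with $\hat f(y)=\sum_{i=0}^{k-\delta}a_i^{q^{k-i}}y^{q^{k-i}}$. After reduction modulo $y^{q^k}-y$ the exponents of $\hat f$ are $q^0$ and $q^{\delta},q^{\delta+1},\dots,q^{k-1}$; these symbolic degrees exceed $k-\delta$ whenever $2\le\delta\le k-1$, so $\hat f$ is \emph{not} an admissible Gabidulin map and $\bigl\{(-\hat f(v),v)\bigr\}$ is \emph{not} the image of a codeword under the plain swap $\sigma(x,y)=(-y,x)$. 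Concretely, for $q=2$, $k=3$, $\delta=2$ and $f(x)=a_1x^2$ one gets $\hat f(y)=a_1^4y^4$, of symbolic degree $2>k-\delta=1$; the same failure occurs for the codes $\gabidulin_{2k+2,k+1,k}$ ($k\ge 2$) actually used later in the paper. Your own hedge about a ``further reflection $j\mapsto k-j$ of the exponent lattice'' is the tell: that reflection is not effected by a coordinate swap. The repair is precisely the paper's map $\phi\colon(a,b)\mapsto(b,a^{q^{k-\delta}})$ (still $\F_q$-linear): raising the relation $u=-\hat f(v)$ to the $q^{k-\delta}$-th power turns the exponents $k-i$ back into $k-\delta-i\in\{0,\dots,k-\delta\}$, so $\phi$ maps $\gabidulin^\perp$ onto $\gabidulin$, and $\phi^{-1}\circ\pi$ (or, as in the paper, $\pi\circ\phi$ after checking that $\phi$ commutes with $\perp$) is the desired correlation. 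Only for $\delta=1$ or $\delta=k$ does your untwisted $\sigma$ happen to work.

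Your argument for the last assertion is an acceptable alternative to the paper's (which simply notes that $S$ is the unique $(k-1)$-flat complementary to all codewords, and that a correlation fixing $\gabidulin$ permutes the codewords and preserves complementarity), and your direct observation that no codeword lies in a hyperplane containing $S$ is correct. But as written it is not self-contained: to conclude that the degree-zero hyperplanes are \emph{exactly} those through $S$ you invoke ``$\gabidulin^\perp=\sigma(\gabidulin)$'' or the correlation from the first part, i.e., the step that is broken. Either import the corrected first part, or drop the hyperplane-degree bookkeeping altogether and use the paper's complementarity argument, which needs only the already-quoted fact that every point outside $S$ lies on a codeword.
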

\begin{proof}
  The condition $\F_q(a,b)\in G(a_0,\dots,a_{k-\delta})^\perp$, or
  \begin{align*}
    \trace\bigl(ax+bf(x)\bigr)&=\trace\bigl((a+ba_0)x+ba_1x^q+\dots
    +ba_{k-\delta}x^{q^{k-\delta}}\bigr)\\
    &=\trace\bigl((a+ba_0)^{q^{k-\delta}}x^{q^{k-\delta}}+(ba_1)^{q^{k-\delta-1}}
      x^{q^{k-\delta}}+\dots+ba_{k-\delta}x^{q^{k-\delta}}\bigr)\\
    &=0
  \end{align*}
  for all $x\in\F_{q^k}$, is equivalent to
  \begin{equation*}
    a^{q^{k-\delta}}=\sum_{i=0}^{k-\delta}(-a_{k-\delta-i}^{q^i})b^{q^i},
  \end{equation*}
  since the trace bilinear form on $\F_{q^k}$ is non-degenerate. This
  shows
  \begin{equation*}
    \F_q(a,b)\in G(a_0,\dots,a_{k-\delta})^\perp\iff\F_q(b,a^{q^{k-\delta}})\in
    G(-a_{k-\delta},-a_{k-\delta-1}^q,\dots,-a_0^{q^{k-\delta}})    
  \end{equation*}
  In other words, the $\F_q$-linear map $\phi\colon V\to V$,
  $(a,b)\mapsto(b,a^{q^{k-\delta}})$, which represents a collineation
  of $\PG(V)$, maps $G(a_0,\dots,a_{k-\delta})^\perp$ to
  $G(-a_{k-\delta},-a_{k-\delta-1}^q,\dots,-a_0^{q^{k-\delta}})$
  and
  $\gabidulin^\perp=\bigl\{G(a_0,\dots,a_{k-\delta})^\perp;a_i\in\F_{q^k}\bigr\}$
  to $\gabidulin$. The correlation
  $\kappa\colon\F_q(a,b)\mapsto H_{b,a^{q^{k-\delta}}}=\F_q\phi(a,b)^\perp$
  then satisfies $\kappa(\gabidulin)=\gabidulin$, since
  $\trace(ax+by) =\trace(by+a^{q^{k-\delta}}x^{q^{k-\delta}})$ implies
  $\phi(H_{a,b})=H_{b,a^{q^{k-\delta}}}$, i.e.\ $\phi$ and
  $\perp$ commute.\footnote{This also shows that the square of $\kappa$ is the
    collineation induced by
    $\phi^2\colon\F_q(a,b)\mapsto\F_q(a^q,b^q)$.}

  The last assertion follows from the fact that $S$ is the unique
  ($k-1$)-flat complementary to all codewords of $\gabidulin$.
  \end{proof}
  
  \begin{remark}
    \label{rmk:gabidual}
    The automorphism group $\Aut(\gabidulin)$ of
    $\gabidulin=\gabidulin_{2k,k,\delta}$ obviously contains all collineations of
    $\PG(V)$, $V=\F_{q^k}\times\F_{q^k}$, induced by linear maps of the form
    $(x,y)\mapsto\bigl(ax,by+f(x)\bigr)$ with $a,b\in\F_{q^k}^\times$
    and $f(x)$ as above. These
    collineations form a subgroup of $\Aut(\gabidulin)$, which has two
    orbits on the point set $\points$ of $\PG(V)$, viz.\ $S$ and
    $\points\setminus S$.\footnote{Clearly this subgroup is also transitive on
      $\gabidulin$, but this fact won't be used in the sequel.}
    From this and Theorem~\ref{thm:gabidual} we have that for any
    point $P\notin S$ and any hyperplane $H\nsupseteq S$ there exists
    a correlation $\kappa\in\Aut(\gabidulin)$ satisfying $\kappa(P)=H$.
  \end{remark}

\subsection{A Second-Order Bound for the Size of the Union
  of a Finite Set Family}
\label{ssec:bonferroni}

A recurring theme in subsequent sections of this paper will be the
determination of the best lower bound for the size of a union $A_1\cup\dots
\cup A_\delta$ of a family of $t$-sets $A_1,\dots,A_\delta$ with a
prescribed upper bound $s$ on the size of 
their pairwise intersections $A_i\cap A_j$, $1\leq i<j\leq
\delta$. For example, we may ask for the least number of points
covered by $\delta$ $k$-dimensional subspaces of $\F_q^v$ at pairwise
subspace distance $\geq 2k-2$, in which case $t=1+q+\dots+q^{k-1}$ and
$s=1$.\footnote{It goes (well, almost)
  without saying that subspaces of $\F_q^v$ are identified with the
  sets of points ($1$-dimensional subspaces) they contain.} 

Assuming that $A_1,\dots,A_\delta\subseteq A$ for some ``universal''
set $A$, we define $a_i$ as the number of ``points'' $a\in A$
contained in exactly $i$ members of $A_1,\dots,A_\delta$. Our
assumptions and easy double-counting arguments give the following
``standard equations/inequalities'' for the first three binomial
moments of the sequence $a_0,a_1,a_2,\dots$\,:
\begin{equation}
  \label{eq:standard}
  \begin{aligned}
    \mu_0&=\sum_{i\geq 0}a_i=\#A,\\
    \mu_1&=\sum_{i\geq 1} ia_i=\sum_{j=1}^\delta\#A_j=t\delta,\\
    \mu_2&=\sum_{i\geq 2} \binom{i}{2}a_i=\sum_{1\leq j_1<j_2\leq\delta}\#(A_{j_1}\cap
    A_{j_2})\leq s\binom{\delta}{2}.
  \end{aligned}
\end{equation}
From this we infer that $\#(A_1\cup\dots\cup
A_\delta)=\sum_{i\geq 1}a_i\geq\mu_1-\mu_2$ with equality if and only
if $a_i=0$ for $i\geq 4$. This is a special
case of the classical second-order Bonferroni Inequality in
Probability Theory \cite{galambos-simonelli96}. 

Our setting usually permits $a_i>0$ for fairly large $i$, so that we
need a stronger bound. In the following proposition we state a bound
suitable for our purposes. In essence this result is also known from
Probability Theory \cite{galambos-simonelli96}. We have adapted it
to the present combinatorial setting and provide a self-contained
proof,
which may be of independent interest. 

\begin{lemma}
  \label{lma:bonferroni}
  Suppose that $A_1,\dots,A_\delta$ are finite nonempty sets with moments
  $\mu_1,\mu_2$ as defined in \eqref{eq:standard}.\footnote{The
    definition of $\mu_1,\mu_2$ does not depend on the choice of
    $A$.} Then
  \begin{equation*}
    \label{eq:bonferroni}
    \#(A_1\cup\dots\cup
    A_\delta)\quad\geq\frac{2(\mu_1i_0-\mu_2)}{i_0(i_0+1)}
    \quad\text{with $i_0=1+\lfloor 2\mu_2/\mu_1\rfloor$}.
  \end{equation*}
  Moreover, \eqref{eq:bonferroni} remains valid if $\mu_1,\mu_2$ are
  replaced by any known bounds $\overline{\mu}_1\leq\mu_1$ and
  $\overline{\mu}_2\geq\mu_2$.
\end{lemma}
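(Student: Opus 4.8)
The plan is to deduce the bound from the three standard (in)equalities \eqref{eq:standard} by a convexity/averaging argument on the sequence $(a_i)_{i\geq 1}$, treating the known quantities $\mu_1=\sum_{i\geq 1}ia_i$ and (an upper bound for) $\mu_2=\sum_{i\geq 2}\binom{i}{2}a_i$ as fixed and minimizing $N:=\sum_{i\geq 1}a_i=\#(A_1\cup\dots\cup A_\delta)$. First I would set up this as a linear program: minimize $\sum_i a_i$ subject to $a_i\geq 0$, $\sum_i ia_i=\mu_1$, $\sum_i\binom{i}{2}a_i\leq\mu_2$. Since we only want a lower bound, it suffices to exhibit a suitable dual certificate, i.e. constants $\lambda,\nu\geq 0$ with $1\geq \lambda i-\nu\binom{i}{2}$ for every integer $i\geq 1$; then $N=\sum a_i\geq\sum(\lambda i-\nu\binom{i}{2})a_i=\lambda\mu_1-\nu\sum\binom{i}{2}a_i\geq\lambda\mu_1-\nu\mu_2$, and we optimize over the admissible $(\lambda,\nu)$.

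The key step is to choose the certificate so that the constraint $\lambda i-\nu\binom{i}{2}\leq 1$ is tight at two consecutive integers $i_0-1$ and $i_0$ — this is exactly the condition that makes the extremal configuration supported on (at most) those two weights, which is the mechanism behind the floor in $i_0=1+\lfloor 2\mu_2/\mu_1\rfloor$. Imposing $\lambda(i_0-1)-\nu\binom{i_0-1}{2}=1$ and $\lambda i_0-\nu\binom{i_0}{2}=1$ solves to $\lambda=\nu(i_0-1)$ and then $\nu(i_0-1)-\nu\binom{i_0}{2}+\nu\binom{i_0-1}{2}=\cdots$, giving $\nu=\frac{2}{i_0(i_0-1)}$ and $\lambda=\frac{2}{i_0}$ after simplification. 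One checks that with these values the quadratic $q(i)=\lambda i-\nu\binom{i}{2}=1-\frac{(i-i_0+1)(i-i_0)}{i_0(i_0-1)}\cdot\frac{?}{?}$ — more carefully, $q(i)-1=-\nu\cdot\frac{(i-(i_0-1))(i-i_0)}{?}$ — is $\leq 1$ for all integers $i$ because $(i-i_0+1)(i-i_0)\geq 0$ for every integer $i$, and $\geq 0$ on the relevant range $1\leq i\leq\delta$ (which one must also verify, or alternatively note that $a_i=0$ for $i$ outside $[1,\delta]$, so we only need $q(i)$ bounded appropriately where $a_i$ can be nonzero — actually only the upper bound $q(i)\leq 1$ is needed for the argument, since $a_i\geq 0$). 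Plugging in yields $N\geq\lambda\mu_1-\nu\mu_2=\frac{2\mu_1}{i_0}-\frac{2\mu_2}{i_0(i_0-1)}=\frac{2(\mu_1(i_0-1)-\mu_2)}{i_0(i_0-1)}$; here I should double-check the index bookkeeping against the statement's $\frac{2(\mu_1 i_0-\mu_2)}{i_0(i_0+1)}$, which corresponds to tightness at $i_0$ and $i_0+1$ rather than $i_0-1$ and $i_0$, i.e. a relabeling $i_0\mapsto i_0+1$; with the stated $i_0=1+\lfloor 2\mu_2/\mu_1\rfloor$ the tangency integers are indeed $i_0$ and $i_0+1$, and the condition $q(i)\leq 1$ for all integers $i$ reads $(i-i_0)(i-i_0-1)\geq 0$, always true. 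Finally, the choice of $i_0$ must be justified as the one that maximizes the resulting bound over all integer tangency points: the map $i_0\mapsto\frac{2(\mu_1 i_0-\mu_2)}{i_0(i_0+1)}$ increases and then decreases, with the switch governed precisely by the sign of $\mu_1 i_0(i_0+1)-\text{(something)}$, which reduces to comparing $i_0$ with $2\mu_2/\mu_1$ — this is where the formula $i_0=1+\lfloor 2\mu_2/\mu_1\rfloor$ comes from.

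The last sentence of the lemma — that \eqref{eq:bonferroni} survives replacing $\mu_1,\mu_2$ by $\overline{\mu}_1\leq\mu_1$, $\overline{\mu}_2\geq\mu_2$ — follows because the right-hand side of the dual bound $\lambda\mu_1-\nu\mu_2$ is nondecreasing in $\mu_1$ and nonincreasing in $\mu_2$ for $\lambda,\nu\geq 0$; one just has to observe that the optimal $i_0$ is computed from the bounds $\overline{\mu}_1,\overline{\mu}_2$ and that the certificate inequality $q(i)\leq 1$ is purely about $i_0$ and does not involve the moments, so it remains valid. I expect the main obstacle to be the purely administrative one of getting all the index shifts and the floor function consistent — deciding cleanly whether tangency is at $(i_0-1,i_0)$ or $(i_0,i_0+1)$ and confirming that the definition $i_0=1+\lfloor 2\mu_2/\mu_1\rfloor$ is exactly the integer that makes the two-point extremal distribution feasible (i.e. that $2\mu_2/\mu_1$ lies in $[i_0-1,i_0)$ in the right normalization) — together with verifying that no boundary effect at $i=1$ or $i=\delta$ spoils the certificate inequality; the underlying LP-duality idea itself is routine.
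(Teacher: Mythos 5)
Your proposal is correct and is essentially the paper's own argument in LP-duality clothing: your certificate $q(i)=\lambda i-\nu\binom{i}{2}$ with $1-q(i)=\frac{(i-i_0)(i-i_0-1)}{i_0(i_0+1)}$ is exactly the paper's quadratic $p(X)=\binom{X-i_0}{2}$, vanishing at the consecutive integers $i_0,i_0+1$, after normalizing by $p(0)$, the only cosmetic difference being that the paper bounds $a_0$ via $p(0)a_0\leq\sum_j p(j)a_j$ and uses $\mu_0$, while you bound $\sum_{i\geq 1}a_i$ directly. The choice $i_0=1+\lfloor 2\mu_2/\mu_1\rfloor$ and the monotonicity observation ($\lambda,\nu\geq 0$) handling $\overline{\mu}_1\leq\mu_1$, $\overline{\mu}_2\geq\mu_2$ likewise match the paper; note also that optimality of $i_0$ need not be verified at all, since the bound holds for every positive integer $i$.
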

\begin{proof}
  Using the representation $p(X)=p_0+p_1X+p_2X(X-1)/2$, we can
  evaluate $\sum_{i\geq 0}p(i)a_i$ in terms of $\mu_0,\mu_1,\mu_2$
  for every polynomial $p(X)\in\R[X]$ of degree at most $2$.
  If $p(0)>0$ and $p(i)\geq 0$ for $i=1,2,\dots$, this will give an
  upper bound for $a_0$ and hence a lower bound for $\#(A_1\cup\dots\cup
    A_\delta)=\sum_{i\geq 1}a_i=\mu_0-a_0$.\footnote{Exhibiting a quadratic
      polynomial $p(X)$ suitable
    for a particular problem in Combinatorics is sometimes referred to as
    the ``variance trick''; cf.\ \cite[p.~6]{cameron-lint91}.}
  The polynomials
  \begin{equation*}
    p(X)=\binom{X-i}{2}=\frac{1}{2}(X-i)(X-i-1)
    =\binom{X}{2}-i\binom{X}{1}+\frac{i(i+1)}{2},\quad
    i\in\Z^+,
  \end{equation*}
  have this property, since they are convex and vanish
  at two successive integers. We obtain 
  \begin{equation*}
    \frac{i(i+1)}{2}a_0=p(0)a_0\leq\sum_jp(j)a_j
    =\frac{i(i+1)}{2}\mu_0-\mu_1i+\mu_2\quad(i\in\Z^+),
  \end{equation*}
  which is equivalent to
  \begin{equation}
    \label{eq:bonferroni-p1}
    \sum_{j\geq 1}a_j\geq\frac{2(\mu_1i-\mu_2)}{i(i+1)}\quad(i\in\Z^+).
  \end{equation}
  The best (largest) of these bounds can be determined by applying standard
  calculus techniques to the function
  $f(x)=\frac{2(\mu_1x-\mu_2)}{x(x+1)}$, $x>0$. The function $f$ satisfies
  \begin{equation*}
    f\left(\frac{2\mu_2}{\mu_1}\right)=
    f\left(\frac{2\mu_2}{\mu_1}+1\right)=\frac{\mu_1^2}{\mu_1+2\mu_2}
  \end{equation*}
  and $f(x)\lessgtr f(x+1)$ for $x\lessgtr 2\mu_2/\mu_1$. Hence the
  maximum value of $f$ at integral arguments is obtained at
  $i_0=\lfloor 2\mu_2/\mu_1\rfloor+1$.\footnote{If $2\mu_2/\mu_1\in\Z$
    then there are two optimal solutions, viz.\ $2\mu_2/\mu_1$ and 
    $2\mu_2/\mu_1+1$. The (unique) maximum of $f$ is at
    $x=\frac{\mu_2}{\mu_1}\left(1+\sqrt{1+\frac{\mu_1}{\mu_2}}\right)\in
    \left(\frac{2\mu_2}{\mu_1},\frac{2\mu_2}{\mu_1}+1\right)$.}

  For a proof of the last assertion note that the family of bounds
  \eqref{eq:bonferroni-p1} also holds for
  $\overline{\mu}_1,\overline{\mu}_2$ in place of $\mu_1,\mu_2$. 
  The rest of the proof is (mutatis mutandis) the same.
\end{proof}

\section{Classification
  Results for General Parameter Sets}\label{sec:general}

In this section we present old and new results on optimal subspace
codes in the mixed-dimension case for general $q$, $v$, and $d$.  We
start with the largest possible minimum distances (i.e.\ $d\approx v$)
and later switch to small $d$. Whenever possible, we determine the
numbers $\smax_q(v,d)$, the dimension distributions realized by the
corresponding optimal codes, and a classification of the different
isomorphism types. In order to avoid trivialities, we assume from now on
$v\geq 3$ and $2\leq d\leq v$.

\subsection{Subspace Distance $v$}\label{ssec:d=v}

Apart from the trivial case $d=1$ covered already by
Lemma~\ref{lma:smax}\eqref{smax:d=1}, the case $d=v$ is the easiest to
settle. For the statement of Part~\eqref{d=v:even} of the
classification result recall that the largest size of a
$(2k,M,2k;k)_q$ constant-dimension code is $M=\smax_q(2k,2k;k)=q^k+1$
and that optimal $(2k,q^k+1,2k;k)_q$ codes are the same as
($k-1$)-spreads in $\PG(2k-1,\F_q)$, i.e.\ sets of mutually disjoint
($k-1$)-flats (or $k$-dimensional subspaces of $\F_q^{2k}$)
partitioning the point set of $\PG(2k-1,\F_q)$.  The number of
isomorphism classes of such spreads or, equivalently, the number of
equivalence classes of translation planes of order $q^k$ with kernel
containing $\F_q$ under the equivalence relation generated by
isomorphism and transposition
\cite{dembowski68,johnson-jha-biliotti07}, is generally unknown (and
astronomically large even for modest parameter sizes).

\begin{theorem}
  \label{thm:d=v}
  \begin{enumerate}[(i)]
  \item\label{d=v:odd} 
    If $v$ is odd then $\smax_q(v,v)=2$. There are $(v+1)/2$
    isomorphism classes of optimal $(v,2,v)_q$ subspace codes. These have the
    form $\{X,X'\}$ with $\dim(X)=i\in[0,(v-1)/2]$, $\dim(X')=v-i$ and
    $X\cap X'=\{0\}$.
  \item\label{d=v:even}
    If $v=2k$ is even then $\smax_q(v,v)=\smax_q(v,v,k)
    =q^k+1$. Every optimal
    $(v,q^k+1,v)_q$ subspace code has constant dimension $k$.
    The exact number of isomorphism classes of such codes is known in
    the following cases:
    \begin{equation*}
      \begin{array}{c|c|c}
        q&v&\text{\# isomorphism classes}\\\hline
        2&4&1\\
        2&6&1\\
        2&8&7\\
        3&4&2\\
        3&6&7\\
        4&4&3\\
        5&4&20\\
        7&4&973
      \end{array}
    \end{equation*}
  \end{enumerate}
\end{theorem}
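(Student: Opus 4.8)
The plan is to separate the two congruence classes of $v$ and handle each via the trivial lower/upper bounds from Section~\ref{ssec:basic} specialised to $d=v$. First I would recall that for $d=v$ the upper bound of Theorem~\ref{thm:bounds} reads $\smax_q(v,v)\le 2+\sum_{k=\lceil v/2\rceil}^{\lfloor v/2\rfloor}\smax_q(v,2\lceil v/2\rceil;k)$. When $v=2k+1$ is odd the summation range $\lceil v/2\rceil\le k\le\lfloor v/2\rfloor$ is empty, giving $\smax_q(v,v)\le 2$; when $v=2k$ is even the range is the single value $k$, so $\smax_q(v,v)\le 2+\smax_q(2k,2k;k)=2+(q^k+1)$ — not yet sharp, so for the even case I would instead argue directly: any two codewords $X,Y$ at distance $v$ satisfy $\dim(X)+\dim(Y)-2\dim(X\cap Y)=v=2\dim(X+Y)-\dim(X)-\dim(Y)$, forcing $X\cap Y=\{0\}$ and $X+Y=V$, i.e.\ $X,Y$ are complementary, and summing the two displayed expressions for $\sdist$ gives $\dim(X)+\dim(Y)=v$. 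This is the key structural observation in both parts.

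For the odd case $v=2k+1$: the lower bound $\smax_q(v,v)\ge 2$ comes from $\{X,X'\}$ with complementary subspaces, which exist in all dimension splits $\{i,v-i\}$; combined with the upper bound this yields $\smax_q(v,v)=2$. For the classification, every optimal code is a pair of complementary subspaces $\{X,X'\}$ with $\dim(X)+\dim(X')=v$, so up to reordering $\dim(X)=i\in[0,(v-1)/2]$ and $\dim(X')=v-i$ (note $i=v/2$ is impossible since $v$ is odd, so the two dimensions are always distinct and there is no ambiguity in choosing the smaller one). By Lemma~\ref{lemma_iso_pairs}, $\GL(v,q)$ acts transitively on ordered pairs of complementary subspaces of fixed dimensions $(i,v-i)$, so each value of $i$ gives exactly one isomorphism class; the polarity $\pi$ of Theorem~\ref{thm:autos} maps the class with parameter $i$ to the class with parameter $v-i$, which is the \emph{same} unordered class, so no further identifications occur. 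Hence there are exactly $(v+1)/2$ classes, indexed by $i\in\{0,1,\dots,(v-1)/2\}$.

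For the even case $v=2k$: the lower bound $\smax_q(2k,2k)\ge\smax_q(2k,2k;k)=q^k+1$ is realised by a $(k-1)$-spread in $\PG(2k-1,\F_q)$ (these always exist). For the matching upper bound and the constant-dimension claim, let $\mathcal{C}$ be any optimal code; by the structural observation every pair of codewords is complementary, so $\dim(X)+\dim(Y)=v=2k$ for all $X\ne Y$ in $\mathcal{C}$. If $\#\mathcal{C}\ge 3$, pick three codewords $X,Y,Z$; then $\dim X+\dim Y=\dim X+\dim Z=\dim Y+\dim Z=2k$ forces $\dim X=\dim Y=\dim Z=k$, so $\mathcal{C}$ has constant dimension $k$ (the residual case $\#\mathcal{C}\le 2$ gives at most $2<q^k+1$ codewords and is thus not optimal). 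Then $\mathcal{C}\subseteq\subspaces{V}{k}$ is a set of pairwise complementary $k$-subspaces, i.e.\ a partial $(k-1)$-spread, whence $\#\mathcal{C}\le q^k+1$ by the standard spread bound, and equality forces $\mathcal{C}$ to be a spread. This proves $\smax_q(2k,2k)=q^k+1$ and that the optimal codes are precisely the $(k-1)$-spreads. The enumeration of isomorphism classes in the listed cases $(q,v)\in\{(2,4),(2,6),(2,8),(3,4),(3,6),(4,4),(5,4),(7,4)\}$ is then quoted from the literature on spreads / translation planes \cite{dembowski68,johnson-jha-biliotti07}, with care taken that the relevant equivalence is the one generated by collineation and transposition (duality), matching the automorphism group of Theorem~\ref{thm:autos}.

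The main obstacle I anticipate is not the coding-theoretic part — which is essentially a two-line argument from the diameter/dimension bounds — but rather the bookkeeping for the classification: in the odd case one must verify that the polarity does not merge two of the $(v+1)/2$ candidate classes (it does not, since it fixes each unordered pair-of-complements type setwise), and in the even case one must correctly cite the spread enumeration results under the \emph{right} notion of equivalence (isomorphism together with duality/transposition), since the raw spread classification numbers in the literature sometimes use a finer or coarser equivalence than the one induced by $\Aut(\PG(2k-1,\F_q),\sdist)$.
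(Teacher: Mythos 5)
Your proposal is correct and follows essentially the same route as the paper's proof: the key observation that $\sdist(X,Y)=v$ forces $X,Y$ to be complementary with $\dim X+\dim Y=v$, which for odd $v$ excludes a third codeword and for even $v=2k$ forces constant dimension $k$ (so optimal codes are exactly the $(k-1)$-spreads), with the isomorphism-class counts quoted from the translation-plane classifications up to isomorphism and polarity. Your extra scaffolding (the empty-sum case of Theorem~\ref{thm:bounds} for the odd upper bound and Lemma~\ref{lemma_iso_pairs} for the transitivity step in the odd classification) just makes explicit what the paper calls ``immediate'' and does not change the argument.
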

The numbers $\smax_q(v,v)$ have also been determined in
\cite[Sect.~5]{gabidulin2009algebraic}. 

\begin{proof}[Proof of the theorem]
  \eqref{d=v:odd} 
  Subspaces $X,X'$ of $V$ are at distance $\sdist(X,X')=v$ iff they are
  complementary ($X\cap X'=\{0\}$, $X+X'=V$). Since $v$ is odd,
  subspaces of the same dimension cannot be complementary, excluding
  the existence of three mutually complementary subspaces. This
  implies $\smax_q(v,v)=2$. The classification of optimal $(v,2,v)_q$
  codes is then immediate.

  \eqref{d=v:even} 
  Suppose that $\mathcal{C}$
  is an arbitrary $(2k,M,2k)_q$ code. If $\mathcal{C}$ contains
  a codeword of dimension $i\neq k$ then all other codewords must have
  dimension $2k-i\neq i$ and hence $\#\mathcal{C}\leq 2$. Certainly
  $\mathcal{C}$ cannot be optimal in this case. Hence
  $\mathcal{C}$ has constant dimension $k$ and size $M=q^k+1$.
  
  Determining the isomorphism classes of the optimal
  $(2k,q^k+1,2k;k)_q$ codes in the table amounts to classifying the
  translation planes of order $\leq 49$ up to isomorphism and
  polarity. This has been done in a series of papers
  \cite{czerwinski-oakden92,dempwolff94,dempwolff-reifart83,hall-swift-walker56,
    mathon-royle95}, from which we have collected the relevant
  information; cf.\ also \cite[Sect.~5]{moorhouse95}.
  \footnote{Uniqueness of the projective planes of orders $4$ and $8$
    gives the uniqueness of the $(4,5,4;2)_2$ and $(6,9,6;3)_2$
    codes. The $8$ translation planes of order $16$ include $1$ polar
    pair (the Lorimer-Rahilly and Johnson-Walker planes), accounting
    for $7$ isomorphism classes of $(8,17,8;4)_2$ codes. The $2$
    translation planes of order $9$ ($\PG(2,\F_3)$ and the Hall plane)
    are both self-polar, accounting for $2$ isomorphism classes of
    $(4,10,4;2)_3$ codes. The $7$ translation planes of order $27$ are
    all self-polar, accounting for $7$ isomorphism classes of
    $(6,28,6;3)_3$ codes. Among the translation planes of order $16$,
    three planes ($\PG(2,\F_{16})$, the Hall plane and one of the two
    semifield planes) have a kernel of order $4$. All three planes are
    self-polar, accounting for $3$ isomorphism classes of
    $(4,17,4;2)_4$ codes. Finally, there are $21$ translation planes
    of order $25$ including $1$ polar pair (the two Foulser planes)
    and $1347$ translation planes of order $49$ including $374$
    polar-pairs, accounting for the remaining two table entries.}
\end{proof}

We remark that it is easy to obtain the numbers $\smax_q(v,v;T)$ for
arbitrary subsets $T\subseteq[0,v]$ from 
Theorem~\ref{thm:d=v}.


\subsection{Subspace Distance $v-1$}\label{ssec:d=v-1}

The case $d=v-1$ is considerably more involved. Here we can no
longer expect that optimal subspace codes have constant dimension,
since for example in a $(2k,q^k+1,2k;k)_q$ constant-dimension
code replacing any codeword by an incident ($k-1$)- or
($k+1$)-dimensional subspace produces a subspace code with
$d=v-1$. However, it turns out that the largest constant-dimension
codes satisfying $d\geq v-1$ are still optimal among all $(v,M,v-1)_q$
codes and that there are only few possibilities for the dimension
distribution of an optimal $(v,M,v-1)_q$ code.

Before stating the classification result for $d=v-1$, let us recall
that in the case of odd length $v=2k+1$ the optimal constant-dimension
codes in the two largest layers $\subspaces{V}{k}$ and
$\subspaces{V}{k+1}$ (which are isomorphic as metric spaces)
correspond to maximal partial ($k-1$)-spreads in $\PG(2k,\F_q)$ and
their duals.\footnote{A partial spread is a set of mutually disjoint
  subspaces of the same dimension which does not necessarily cover
  the whole point set of the geometry.} The 
maximum size of a partial ($k-1$)-spread in $\PG(2k,\F_q)$ is
$q^{k+1}+1$, as determined by Beutelspacher
\cite[Th.~4.1]{beutelspacher75}; cf.\ also
\cite[Th.~2.7]{eisfeld-storme00}. This gives $\smax_q(2k+1,2k;k)_q=
\smax_q(2k+1,2k;k+1)_q=q^{k+1}+1$. Moreover, there are partial spreads
$\mathcal{S}$ of the following type: The $q^k$ holes (uncovered
points) of $\mathcal{S}$ form the complement of a $k$-dimensional
subspace $X_0$ in a ($k+1$)-dimensional subspace $Y_0$, and
$X_0\in\mathcal{S}$. We may call $X_0$ the ``moving subspace'' of
$\mathcal{S}$, since it can replaced by any other $k$-dimensional
subspace of $Y_0$ without destroying the spread property of $\mathcal{S}$.
\begin{theorem}
  \label{thm:d=v-1}
  \begin{enumerate}[(i)]
  \item\label{d=v-1:even}
    If $v=2k$ is even then $\smax_q(v,v-1)=\smax_q(v,v;k)
    =q^k+1$. All optimal
    subspace codes contain, apart from codewords of dimension $k$,
    at most one codeword of each of the dimensions $k-1$ and $k+1$. The 
    dimension distributions realized by optimal subspace codes are
    $(\delta_{k-1},\delta_k,\delta_{k+1})=(0,q^k+1,0)$, $(1,q^k,0)$, 
    $(0,q^k,1)$, $(1,q^k-1,1)$ (and $\delta_t=0$ for all other $t$).
  \item\label{d=v-1:odd} If $v=2k+1\geq 5$ is odd then
    $\smax_q(v,v-1)=\smax_q(v,v-1;k)=q^{k+1}+1$. The dimension
    distributions realized by optimal subspace codes are
    $(\delta_{k-1},\delta_k,\delta_{k+1},\delta_{k+2})=(0,q^{k+1}+1,0,0)$,
    $(0,0,q^{k+1}+1,0)$, $(0,q^{k+1},1,0)$, $(0,1,q^{k+1},0)$,
    $(0,q^{k+1},0,1)$, and $(1,0,q^{k+1},0)$.
  \end{enumerate}
\end{theorem}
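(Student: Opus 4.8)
The plan is to prove Theorem~\ref{thm:d=v-1} by combining the constant-dimension bounds recalled just before the statement with the shortening construction of Lemma~\ref{lma:shortpunct}\eqref{l:short}. The strategy is an induction-in-spirit reduction: a $(v,M,v-1)_q$ code, after a single point-hyperplane shortening, becomes a $(v-1,M',d')_q$ code with $d'\geq v-2$, so the extremal constant-dimension sizes $q^k+1$ (for $v=2k$) and $q^{k+1}+1$ (for $v=2k+1$) must govern everything. First I would treat the even case \eqref{d=v-1:even}. The lower bound $\smax_q(2k,2k-1)\geq q^k+1$ is immediate from a $(k-1)$-spread, and the listed dimension distributions are realised by the spread-modification argument sketched in the text (replace a spread element, or use a partial spread with a moving subspace of the appropriate flavour). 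For the upper bound I would argue that a $(2k,M,2k-1)_q$ code $\mathcal{C}$ has codewords only in dimensions $k-1,k,k+1$ — anything of dimension $\leq k-2$ or $\geq k+2$ is at distance $\geq 2k-1$ from everything, but two such codewords would be at distance $\leq 2(k-2)+?$, and more simply the diameter bound of Lemma~\ref{lma:smax}\eqref{smax:diam} forces at most one codeword in the extreme layers and then a parity/incidence check rules out combining far layers. Concretely, if $\dim X=k-1$ and $\dim Y=k+1$ then $\sdist(X,Y)=\dim X+\dim Y-2\dim(X\cap Y)=2k-2\dim(X\cap Y)$, which is $\geq 2k-1$ only if $X\cap Y=\{0\}$, impossible for dimension reasons inside a $2k$-space; so the $(k-1)$- and $(k+1)$-layers cannot both be nonempty. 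Similarly a $(k-1)$-dimensional codeword $X$ forces every $k$-dimensional codeword $Z$ to satisfy $\dim(X\cap Z)=0$, i.e.\ $X\not\subseteq Z$ and $X\cap Z=\{0\}$; dually for a $(k+1)$-dimensional codeword. Counting the $k$-subspaces disjoint from a fixed $(k-1)$-subspace, together with the mutual-disjointness of the $k$-layer itself, then caps the total at $q^k+1$ and pins down the four dimension distributions.

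For the odd case \eqref{d=v-1:odd} with $v=2k+1$, the lower bound $q^{k+1}+1$ comes from Beutelspacher's maximal partial $(k-1)$-spread in $\PG(2k,\F_q)$ (and its dual in the $(k+1)$-layer), and the six listed distributions come from the ``moving subspace'' partial spread described before the theorem together with its polarity image and the trivial one-codeword modifications. The real content is the upper bound. Here I would again restrict the support: in a $2k+1$-space the only layers that can jointly host codewords at pairwise distance $\geq 2k$ are $k,k+1$ plus at most a single stray codeword in $k-1$ or $k+2$ (an easy diameter computation: $\subspaces{V}{j}$ has diameter $2\min\{j,2k+1-j\}$, which is $<2k$ once $\lvert 2j-(2k+1)\rvert\geq 1$, i.e.\ always, so every single layer holds a diameter-$<2k$ set; the interaction of two layers is controlled as above). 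So up to the polarity $\pi$ we may assume $\mathcal{C}=\mathcal{C}_k\uplus\mathcal{C}_{k+1}\uplus(\text{at most one codeword of dimension }k-1)$. Then I would shorten: pick a point $P$ and a hyperplane $H$ with $P\not\subseteq H$ cleverly (e.g.\ with $\deg(P)+\deg(H)$ as large as possible, using averaging over all such pairs as in the standard double-counting), obtaining a $(2k,M',d')_q$ code with $d'\geq v-2=2k-1$, hence by part \eqref{d=v-1:even} $M'\leq q^k+1$. Averaging $\deg(P)+\deg(H)$ over all non-incident $(P,H)$ gives $M'\approx M\cdot(\text{average incidence density})$, and choosing the best pair turns $M'\leq q^k+1$ into $M\leq q^{k+1}+1$ after a short computation with Gaussian binomials; equality analysis then forces $\mathcal{C}_{k+1}=\emptyset$ or $\mathcal{C}_k=\emptyset$ down to one element and recovers exactly the six distributions.

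The main obstacle, and where I would spend the most care, is the equality/uniqueness analysis of the dimension distributions in the odd case: getting the numerical bound $M\leq q^{k+1}+1$ from shortening is routine averaging, but showing that the only possible distributions are the six listed ones — in particular that one cannot have, say, nonzero $\delta_{k-1}$ and $\delta_{k+2}$ simultaneously, or nonzero $\delta_{k-1}$ together with a full-size $(k+1)$-layer unless forced into the ``moving subspace'' configuration — requires chasing the intersection conditions through the shortening map and matching them against the structure of extremal partial spreads. A clean way to organise this is: (a) show a codeword of dimension $k-1$ forces it to be disjoint from all $k$-codewords and contained in all $(k+1)$-codewords' duals, so that under $\pi$ it behaves like a ``hole-pattern'' witness; (b) invoke the known structure (Beutelspacher/Eisfeld--Storme) of the hole set of a maximal partial $(k-1)$-spread of size $q^{k+1}+1$ in $\PG(2k,\F_q)$, namely that the $q^k$ holes fill out $Y_0\setminus X_0$ for an incident $k$-/$(k+1)$-flat pair; (c) read off that adding the extra low- or high-dimensional codeword is possible only in the configurations corresponding to replacing or adjoining the moving subspace. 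Steps (b)--(c) are the crux; everything else is diameter bookkeeping and the single application of Lemma~\ref{lma:shortpunct}\eqref{l:short} reducing $d=v-1$ to the already-settled case $d=v-1$ in dimension $v-1$ (equivalently $d'\geq(v-1)-1$).
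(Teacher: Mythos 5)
There are two concrete gaps, one in each part. In the even case your key step asserts that for $\dim X=k-1$ and $\dim Y=k+1$ the condition $X\cap Y=\{0\}$ is ``impossible for dimension reasons inside a $2k$-space'', and you conclude that the $(k-1)$- and $(k+1)$-layers cannot both be nonempty. This is false: since $(k-1)+(k+1)=2k=v$, such $X$ and $Y$ can be complementary, and indeed the theorem you are proving lists $(\delta_{k-1},\delta_k,\delta_{k+1})=(1,q^k-1,1)$ as a realizable distribution, so your argument contradicts the statement itself (and also your own later claim of ``four dimension distributions''). The paper's proof instead notes that if both $X_0$ and $Y_0$ exist they must be complementary, and then separates the cases $\delta_k=q^k+1$, $q^k$, $q^k-1$ by counting the holes of the partial spread $\mathcal{C}_k$: a $(k-1)$-dimensional codeword would contain $1+q+\dots+q^{k-2}$ holes while a $(k+1)$-dimensional codeword must contain all $1+q+\dots+q^{k-1}$ holes, and these requirements conflict exactly when $\delta_k=q^k$.

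In the odd case the central step of your plan---shorten in a well-chosen non-incident pair $(P,H)$ and convert $M'=\deg(P)+\deg(H)\leq q^k+1$ into $M\leq q^{k+1}+1$ by averaging---does not work quantitatively. For codewords of dimensions $k$ and $k+1$ in a $(2k+1)$-space, the average of $\deg(P)+\deg(H)$ over non-incident pairs is roughly $M\bigl(q^{-k}+q^{-k-1}\bigr)$, so the best pair is only guaranteed to satisfy $\deg(P)+\deg(H)\gtrsim Mq^{-k}$; combined with the bound $q^k+1$ from part (i) this yields $M\lesssim q^{2k}$, weaker than the target $q^{k+1}+1$ by a factor of about $q^{k-1}$, and no choice of ``best pair'' can close this gap without entirely new structural input. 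The paper's actual argument is different: it shows $\mathcal{C}_k$ and $\mathcal{C}_{k+1}$ are a partial spread and a dual partial spread whose covered point sets are disjoint, and then excludes the hard range $2\leq\delta_{k+1}\leq\delta_k\leq q^{k+1}-1$ by lower-bounding, via the second-order bound of Lemma~\ref{lma:bonferroni}, the number of points covered by $\delta_{k+1}$ subspaces of dimension $k+1$ pairwise meeting in at most a point, and checking this exceeds the room left by $\mathcal{C}_k$; this covering analysis is the real content of the proof and is absent from your outline. Finally, your steps (b)--(c) invoke a structure theorem for the hole set of an arbitrary maximum partial $(k-1)$-spread (that the $q^k$ holes form $Y_0\setminus X_0$); Beutelspacher and Eisfeld--Storme give only the maximum size $q^{k+1}+1$, not this hole structure, and the paper's determination of the admissible dimension distributions needs no such result---for instance $\delta_{k+2}=1$ is handled by noting that $Y_0$ meets each element of $\mathcal{C}_k$ in at most a point, hence contains at least $1+q+\dots+q^{k+1}-\delta_k$ holes, forcing $\delta_k\leq q^{k+1}$.
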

In \eqref{d=v-1:odd} it is necessary to exclude the case $v=3$, since
$\smax_q(3,2)=q^2+q+2$; cf.\ Section~\ref{ssec:d=2}.  Some results on
the numbers $\smax_q(v,v-1)$ can also be found in
\cite[Sect.~5]{gabidulin2009algebraic}
\begin{proof}
  \eqref{d=v-1:even} Let $\mathcal{C}$ be an optimal $(2k,M,2k-1)_q$
  code. Since $\subspaces{V}{<k}$ has diameter $2k-2$, at most one
  codeword of dimension $<k$ can occur in $\mathcal{C}$, and similarly
  for dimension $>k$. This and Theorem~\ref{thm:d=v}\eqref{d=v:even}
  give $q^k+1\leq M\leq q^k+3$. Clearly there must be codewords of
  dimension $k$, and hence none of dimensions $<k-1$ or $>k+1$.

  If there exists $X_0\in\mathcal{C}$ with $\dim(X_0)=k-1$ then $X_0\cap
  Z=\emptyset$ for all other codewords $Z\in\mathcal{C}_k$. Similarly,
  if there exists $Y_0\in\mathcal{C}$ with $\dim(Y_0)=k+1$ then
  $Y_0\cap Z=P$ is a point for all $Z\in\mathcal{C}_k$, and if both
  $X_0$ and $Y_0$ exist then they must be complementary subspaces of
  $V$.

  If $\delta_k=q^k+1$ then $\mathcal{C}_k$ is a spread and covers all
  points. Hence $X_0$ cannot exist, i.e.\ $\delta_{k-1}=0$. By duality
  we then have also $\delta_{k+1}=0$ and hence $M=q^k+1$.  Next suppose
  $\delta_k=q^k$. Then $\mathcal{C}_k$ is a partial spread with
  exactly $1+q+\dots+q^{k-1}$ holes (uncovered points). If $X_0$
  exists, it contains $1+q+\dots+q^{k-2}$ of these holes. If $Y_0$
  exists, it contains $\#Y_0-q^k=1+q+\dots+q^{k-1}$ of these holes
  (i.e.\ all holes). Since these two properties conflict with each
  other, we must have $\delta_{k-1}\delta_{k+1}=0$ and hence
  $M=q^k+1$.  The only remaining possibility is $\delta_k =q^k-1$,
  $\delta_{k-1}=\delta_{k+1}=1$; here $M=q^k+1$ as well.
  
  Until now we have shown that $\smax_q(v,v-1;k)=q^k+1$ and the
  dimension distributions realized by optimal codes are among those
  listed. Conversely, the distribution
  $(\delta_{k-1},\delta_k,\delta_{k+1})=(0,q^k+1,0)$ is realized by a
  ($k-1$)-spread, $(1,q^k,0)$ by a ($k-1$)-spread with one subspace
  $X$ replaced by a ($k-1$)-dimensional subspace $X_0\subset X$,
  $(0,q^k,1)$ by the dual thereof, and $(1,q^k-1,1)$ by removing from
  a ($k-1$)-spread a pair of subspaces $X,Y$ and adding $X_0,Y_0$ with
  $\dim(X_0)=k-1$, $\dim(Y_0)=k+1$, $X_0\subset X$, $Y_0\supset Y$ and
  $X_0\cap Y_0=\emptyset$.\footnote{This can be done, since the
    (final) choice of $Y_0$ amounts to selecting a complement to
    $(X_0+Y)/Y$ in $V/Y$, i.e.\ a point outside a hyperplane in the
    quotient geometry $\PG(V/Y)$.}

  \eqref{d=v-1:odd} Let $\mathcal{C}$ be an optimal $(2k+1,M,2k)_q$
  code. From the remarks preceding Theorem~\ref{thm:d=v-1}
  we know that $M\geq q^{k+1}+1$. For reasons of diameter (cf.\ the proof of
  \eqref{d=v-1:even}\footnote{The argument is almost the same: There
    must be codewords of dimension $k$ or $k+1$ (otherwise
    $\#\mathcal{C}\leq 2$), and hence none of dimension $<k-1$ or $>k+2$}),
  $\mathcal{C}$ can contain only codewords of dimensions $k-1$, $k$,
  $k+1$ and $k+2$; moreover, $\delta_{k-1},\delta_{k+2}\leq 1$ and
  $\delta_{k-1}\delta_k=\delta_{k+1}\delta_{k+2}=0$.

  If $\delta_{k-1}=\delta_{k+2}=1$ then $M=2$, which is
  absurd.\footnote{We know already that $M\geq q^{k+1}+1$.} If exactly
  one of $\delta_{k-1},\delta_{k+2}$ is nonzero, we can assume by
  duality that $\delta_{k+2}=1$. The $\delta_k$ codewords in $\mathcal{C}_k$ form
  a partial spread and meet the single codeword
  $Y_0\in\mathcal{C}_{k+2}$ in distinct points. Hence $Y_0$ contains
  $1+q+\dots+q^{k+1}-\delta_k$ holes of $\mathcal{C}_k$ and
  $1+q+\dots+q^{k+1}-\delta_k\leq
  1+q+\dots+q^{2k}-\delta_k(1+q+\dots+q^{k-1})$, the total number of
  holes of $\mathcal{C}_k$. This implies
  $\delta_k\leq q^{k+1}$ and hence $M=q^{k+1}+1$,
  $(\delta_{k-1},\delta_k,\delta_{k+1},\delta_{k+2})=(0,q^{k+1},0,1)$. A
  subspace code realizing this dimension distribution can be obtained
  from a maximal partial ($k-1$)-spread $\mathcal{S}$ of the type
  discussed before Theorem~\ref{thm:d=v-1}, if we replace the moving
  subspace $X_0$ by any ($k+2$)-dimensional subspace
  $Y\supset Y_0$.\footnote{The remaining blocks $X\in\mathcal{S}$
    satisfy $X\cap Y_0=\emptyset$ and hence $\dim(X\cap Y)\leq 1$.}
  
  In the remaining case $\delta_{k-1}=\delta_{k+2}=0$ we may assume
  $\delta_{k+1}\leq\delta_{k}$, again by duality. Assuming further
  $\delta_{k+1}\in\{0,1\}$ or, by symmetry,
  $\delta_{k}\in\{q^{k+1}+1,q^{k+1}\}$ easily leads to $M=q^{k+1}+1$
  and one of the dimension distributions $(0,q^{k+1}+1,0,0)$,
  $(0,q^{k+1},1,0)$. The first distribution is realized by any maximal
  partial ($k-1$)-spread and the second distribution by a subspace code
  obtained from a maximal partial ($k-1$)-spread $\mathcal{S}$ of the
  type discussed before Theorem~\ref{thm:d=v-1}, if we replace the
  moving subspace $X_0$ by $Y_0$.
  
  The only remaining case is $2\leq\delta_{k+1}\leq\delta_{k}\leq
  q^{k+1}-1$ (and $\delta_{k-1}=\delta_{k+2}=0$). Here our goal
  is to show that this forces $\delta_k+\delta_{k+1} <q^{k+1}+1$, a
  contradiction. Since $M=\#\mathcal{C}>q^{k+1}+1$ implies the
  existence of a $(2k+1,q^{k+1}+1,2k)_q$ code, we can
  assume $\delta_{k}=q^{k+1}+1-\delta_{k+1}$ and hence $2\leq\delta_{k+1}
  \leq\left\lfloor(q^{k+1}+1)/2\right\rfloor$.\footnote{Using the
    smaller upper bound for $\delta_{k+1}$, however, does not simplify the
    subsequent proof, and we may just consider the full range
    $2\leq\delta_{k+1}\leq q^{k+1}-1$.}
  
  Let $A$, $B$ be the sets of points covered by $\mathcal{C}_k$ and
  $\mathcal{C}_{k+1}$, respectively. Since codewords in
  $\mathcal{C}_k$ are mutually disjoint and disjoint from those in
  $\mathcal{C}_{k+1}$, we have $A\cap B=\emptyset$,
  $\#A=\delta_k(1+q+\dots+q^{k-1})
  =(q^{k+1}+1-\delta_{k+1})(1+q+\dots+q^{k-1})=1+q+\dots+q^{k-1}+q^{k+1}+\dots+
  q^{2k}-\delta_{k+1}(1+q+\dots+q^{k-1})$,
  and hence $\#B\leq q^k+\delta_{k+1}(1+q+\dots+q^{k-1})$. Further we
  know that codewords in $\mathcal{C}_{k+1}$ intersect each other in
  at most a point. Hence the desired contradiction will follow if we
  can show that the minimum number $c(\delta)$ of points covered by
  $\delta$ subspaces of $V$ of dimension $k+1$ mutually
  intersecting in at most a point is strictly larger than
  than the linear function $g(\delta)=q^k+\delta(1+q+\dots+q^{k-1})$ for all
  $\delta\in\{2,3,\dots,q^{k+1}-1\}$.

  For bounding $c(\delta)$ we use Lemma~\ref{lma:bonferroni}.
  The \emph{degree distribution}
  $(b_0,b_1,b_2,\dots)$ of such a set of subspaces
  is defined by $b_i=\#\{P;\deg(P)=i\}$ and satisfies the
  ``standard equations''
  \begin{align*}
    \sum_{i\geq 0}b_i&=1+q+\dots+q^{2k},\\
    \sum_{i\geq 1} ib_i&=\delta(1+q+\dots+q^k),\\
    \sum_{i\geq 2} \binom{i}{2}b_i&=\binom{\delta}{2}.
  \end{align*}
  In this case the moments $\mu_i=\mu_i(\delta)$, $i=1,2$, 
  and hence also the function $f(x)=f(x;\delta)$ used in the proof
  of Lemma~\ref{lma:bonferroni}, depend on $\delta$. The lemma gives
  \begin{align*}
    c(\delta)=\sum_{i\geq
               1}b_i&\geq\frac{2\delta(1+q+\dots+q^k)i_0-\delta(\delta-1)}
               {i_0(i_0+1)}\\
                    &=\frac{\delta\bigl(1+2i_0(1+q+\dots+q^k)-\delta\bigr)}
                      {i_0(i_0+1)}=f(i_0;\delta)
  \end{align*}
  with $i_0=i_0(\delta)
  =1+\bigl\lfloor\frac{\delta-1}{1+q+\dots+q^k}\bigr\rfloor$,
  which is equivalent to
  \begin{equation}
    \label{eq:simul}
    (i_0-1)(1+q+\dots+q^k)+1\leq\delta\leq i_0(1+q+\dots+q^k).
  \end{equation}
  Since 
  \begin{align*}
    f(i_0;i_0(1+q+\dots+q^k)\bigr)
    &=f\bigl(i_0;1+i_0(1+q+\dots+q^k)\bigr)\\
    &=f\bigl(i_0+1;1+i_0(1+q+\dots+q^k)\bigr),
  \end{align*}
  the latter equality being an instance of
  $f\bigl(\frac{2\mu_2}{\mu_1}\bigr)
  =f\bigl(\frac{2\mu_2}{\mu_1}+1\bigr)$ (cf.\ the proof of
  Lemma~\ref{lma:bonferroni}),
  we see that the lower bound $\delta\mapsto
  f\bigl(i_0(\delta);\delta\bigr)$ has a continuous
  extension $F\colon[1,q^{k+1}]\to\R$, which is strictly concave in the
  intervals displayed in \eqref{eq:simul} and constant on the ``holes''
  of length $1$ in between.
  From this it is clear that we need only check the inequality
  $F(\delta)>g(\delta)$ at the points
  $\delta=(i_0-1)(1+q+\dots+q^k)+1$, $1\leq i_0\leq q$ (left endpoints
  of the intervals in \eqref{eq:simul}). Moreover, at the first
  endpoint $\delta=1$ ($i_0=1$) and the last endpoint
  $\delta=q^{k+1}$ ($i_0=q$) equality is sufficient.\footnote{The last
    point $\delta=q^{k+1}$ is best viewed as the right endpoint of the
    hole $[q^{k+1}-1,q^{k+1}]$, since the function $f(q;\delta)$ not really
    matters. Alternatively, one could check the strict inequality at
    $\delta=2$ and $\delta=q^{k+1}-1$, respectively.}

  In the boundary cases we have indeed equality,
  $F(1)=f(1;1)=q^k+1\cdot(1+q+\dots+q^{k-1})=g(1)$ and
  $F(q^{k+1})=f(q;q^{k+1})=q^k+q^{k+1}(1+q+\dots+q^{k-1})=g(q^{k+1})$,
  as is easily verified from the definition of
  $f(i_0;\delta)$.\footnote{This comes not unexpected, since in
    these cases the optimal codes have size $q^{k+1}+1$ and the sets
    $A,B$ partition the point set of $\PG(2k,\F_q)$.} Finally, for
  $2\leq i_0\leq q-1$ we have
  \begin{align*}
    F\bigl(1+(i_0-1)(1+q+\dots+q^k)\bigr)
    &=\frac{1}{i_0}(1+q+\dots+q^k)\bigl(1+(i_0-1)(1+q+\dots+q^k)\bigr)\\
    &=(1+q+\dots+q^k)\left(1+\tfrac{i_0-1}{i_0}\cdot(q+\dots+q^k)\right)\\
    &>(1+q+\dots+q^k)\left(1+(i_0-1)(1+\dots+q^{k-1})\right)\\
    &=1+\dots+q^k+(i_0-1)(1+\dots+q^k)(1+\dots+q^{k-1})\\
    &=q^k+\bigl(1+(i_0-1)(1+\dots+q^k\bigr)(1+\dots+q^{k-1})\\
    &=g\bigl(1+(i_0-1)(1+q+\dots+q^k)\bigr),
  \end{align*}
  where we have used $\frac{i_0-1}{i_0}\cdot q>i_0-1$. This completes the
  proof of the theorem.\footnote{The last computation could be
    replaced by another convexity argument involving the function
    $i_0\mapsto f\bigl(i_0;1+(i_0-1)(1+q+\dots+q^k)\bigr)$.}
\end{proof}

\begin{remark}
  It is known that any partial ($k-1$)-spread in $\PG(2k-1,\F_q)$ of
  cardinality $q^k-1$ can be completed to a spread; see for example
  \cite[Th.~4.5]{eisfeld-storme00}. This implies that all optimal
  $(2k,q^k+1,2k-1)_q$ subspace codes arise from a ($k-1$)-spread by
  the constructions described at the end of the proof of
  Part~\eqref{d=v-1:even} of Theorem~\ref{thm:d=v-1}. 
\end{remark}

\subsection{Subspace Distance $v-2$}\label{ssec:d=v-2}

The case $d=v-2$ is yet more involved and we are still far from being
able to determine the numbers $\smax_q(v,v-2)$ in general. For even
$v=2k$ the problem almost certainly includes the determination of the
numbers $\smax_q(2k,2k-2;k)$, which are known so far only in a single
nontrivial case, viz.\ $\smax_2(6,4;3)=77$ \cite{smt:fq11proc}. On the
other hand, we will present rather complete information on the odd
case $v=2k+1$, for which the corresponding numbers
$\smax_q(2k+1,2k-1;k)=\smax_q(2k+1,2k;k)=q^{k+1}+1$, equal to the size
of a maximal partial ($k-1$)-spread in $\PG(2k,\F_q)$, are known; cf.\
the references in Section~\ref{ssec:d=v-1}. Our results are collected
in Theorem~\ref{thm:d=v-2} below. For the proof of the theorem we will
need the fact that a maximal partial ($k-1$)-spread $\mathcal{S}$ in
$\PG(2k,\F_q)$ covers each hyperplane at least once. This (well-known)
fact may be seen as follows: If a hyperplane $H$ of $\PG(2k,\F_q)$
contains $t$ members of $\mathcal{S}$, it intersects the remaining
$q^{k+1}+1-t$ members in a ($k-1$)-dimensional space and hence
\begin{equation*}
  t(1+q+\dots+q^{k-1})+(q^{k+1}+1-t)(1+q+\dots+q^{k-2})\leq\#H=1+q+\dots+q^{2k-1},
\end{equation*}
or $tq^{k-1}\leq q^{k-1}+q^{k}$. The difference
$q^{k-1}+q^{k}-tq^{k-1}=q^{k-1}(q+1-t)$ gives the number of holes of
$\mathcal{S}$ in $H$, which must be $\leq q^k$ (the total number of
holes of $\mathcal{S}$). This implies $1\leq t\leq q+1$, as asserted.

We also see that the number of holes of $\mathcal{S}$ in every hyperplane
of $\PG(2k,\F_q)$ is of the form $sq^{k-1}$ with
$s\in\{0,1,\dots,q\}$. Further, since the average number of members of
$\mathcal{S}$ in a hyperplane is
\begin{equation*}
  \frac{(q^{k+1}+1)(1+q+\dots+q^k)}{1+q+\dots+q^{2k}}
  =\frac{1+q+\dots+q^{2k+1}}{1+q+\dots+q^{2k}}>q,
\end{equation*}
there exists at least one hyperplane containing $q+1$ members, and
hence no holes of $\mathcal{S}$. The latter says that the set of holes
of $\mathcal{S}$ does not form a blocking set with respect to
hyperplanes and implies in particular that no line
consists entirely of holes of $\mathcal{S}$. This
fact will be needed later in Remark~\ref{rmk:d=v-2:delta}.

\begin{theorem}
  \label{thm:d=v-2}
  \begin{enumerate}[(i)]
  \item\label{d=v-2:even}
    If $v=2k\geq 8$ is even then 
    $\smax_q(v,v-2)=\smax_q(v,v-2;k)$,
    and the known bound $q^{2k}+1\leq\smax_q(v,v-2;k)\leq(q^k+1)^2$
    applies. Moreover, $\smax_q(4,2)=q^4+q^3+2q^2+q+3$ for all $q$,
    $\smax_2(6,4)=77$ 
    and $q^6+2q^2+2q+1\leq\smax_q(6,4)\leq(q^3+1)^2$ for all
    $q\geq 3$.
  \item\label{d=v-2:odd} If $v=2k+1\geq 5$ is odd then
    $\smax_q(v,v-2)\in\{2q^{k+1}+1,2q^{k+1}+2\}$. Moreover,
    $\smax_q(5,3)=2q^3+2$ 
    for all $q$ and $\smax_2(7,5)=2\cdot 2^4+2=34$.%
    \footnote{The bounds for $\smax_2(v,v-2)$ were already established in
      \cite[Th.~5]{etzion2013problems} and $\smax_2(5,3)=18$ in
      \cite[Th.~14]{etzion2011error}.}
  \end{enumerate}
\end{theorem}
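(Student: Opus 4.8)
The plan is to handle the even case (i) and the odd case (ii) separately, and in both to begin by invoking Theorem~\ref{thm:bounds} with $\delta=\lceil(v-2)/2\rceil$: the subspaces of dimension $<\delta$, and those of dimension $>v-\delta$, each form a metric space of diameter $<v-2$, so an optimal $(v,M,v-2)_q$ code has at most one codeword of dimension in $[0,\delta-1]$, at most one of dimension in $[v-\delta+1,v]$, and all remaining codewords in two or three ``central'' layers. The rest is a structural analysis of how these layers interact, in the spirit of the proofs of Theorems~\ref{thm:d=v} and~\ref{thm:d=v-1}, using partial-spread sizes and hole-counting.

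For $v=2k\ge8$ ($\delta=k-1$) the central layers are $k-1,k,k+1$: the dimension-$k$ codewords form a constant-dimension code with $\dim(X\cap Y)\le1$, codewords of dimension $k-1$ are pairwise disjoint and disjoint from every $k$-dimensional codeword, and dually for dimension $k+1$. The bounds $q^{2k}+1\le\smax_q(2k,2k-2;k)\le(q^k+1)^2$ are known (the lower from $\gabidulin_{2k,k,k-1}\cup\{S\}$, where $S$ is disjoint from each graph codeword and hence at distance $2k$ from it; the upper from the theory of constant-dimension codes). To prove $\smax_q(2k,2k-2)=\smax_q(2k,2k-2;k)$ I would show that an optimal code cannot be properly mixed: the side layers $k\mp1$ are partial $(k-2)$-spreads of $\PG(2k-1,\F_q)$ and hence of size $\mathrm{O}(q^{k+1})=\mathrm{o}(q^{2k})$, so the issue is whether a near-maximal central layer can coexist with a side codeword; it cannot, because a $(k-1)$-dimensional codeword, being disjoint from every $k$-dimensional one, depletes the central layer by more than it contributes. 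Ruling out the simultaneous presence of a $(k-1)$- and a $(k+1)$-codeword (a hole-count as in the proof of Theorem~\ref{thm:d=v-1}\eqref{d=v-1:even}) and absorbing the at most two extremal codewords by passing to incident $k$-flats then completes the reduction. The remaining assertions of~(i) follow: $\smax_q(4,2)=q^4+q^3+2q^2+q+3=\gauss{4}{2}{q}+2$ is the case $v=4$, $d=2$, lying outside this reduction, and comes from the determination of $\smax_q(v,2)$ in Section~\ref{ssec:d=2}; for $q=2$ the reduction still applies at $v=6$, giving $\smax_2(6,4)=\smax_2(6,4;3)=77$ by \cite{smt:fq11proc}; and for $q\ge3$ the bounds for $\smax_q(6,4)$ combine an explicit mixed-dimension construction attaining $q^6+2q^2+2q+1$ with the constant-dimension upper bound $(q^3+1)^2$.

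For $v=2k+1\ge5$ ($\delta=k$) the central layers are $k$ and $k+1$: dimension-$k$ codewords form a partial $(k-1)$-spread of $\PG(2k,\F_q)$ of size $\le q^{k+1}+1$, dually for dimension $k+1$, and cross-distances force $\dim(X\cap Y)\le1$ between the two layers. Theorem~\ref{thm:bounds} yields only $M\le2q^{k+1}+4$, and the improvement to $2q^{k+1}+2$ is the key point. If the code has a codeword $Y$ of dimension $k+2$, then $\dim(Y\cap Z)=1$ for each of the $\delta_k$ dimension-$k$ codewords $Z$ (the lower bound from dimensions, the upper from the distance $v-2$), so the $\delta_k$ points in which $Y$ meets the partial spread are distinct and the other $\frac{q^{k+2}-1}{q-1}-\delta_k$ points of $Y$ are holes of the spread; comparison with the total number $\frac{q^{2k+1}-1}{q-1}-\delta_k\frac{q^{k}-1}{q-1}$ of holes gives, after simplification (using $k\ge2$), $\delta_k\le q^{k+1}$. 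Applying the same argument to the polar code shows that a $(k-1)$-dimensional codeword forces $\delta_{k+1}\le q^{k+1}$; the remaining low/high possibilities (e.g.\ $\{0\}$ or $V$ as a codeword) only yield small codes. Hence in all cases $\delta_k+\delta_{k+2}\le q^{k+1}+1$ and $\delta_{k+1}+\delta_{k-1}\le q^{k+1}+1$, so $M\le2q^{k+1}+2$. For the matching lower bound $2q^{k+1}+1$ I would combine a maximal partial $(k-1)$-spread $\mathcal{S}$ (filling the dimension-$k$ layer) with its image under a suitable polarity (filling the dimension-$(k+1)$ layer, whose members pairwise meet in a point), deleting the few elements whose cross-intersection with a member of $\mathcal{S}$ has dimension $\ge2$.

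Finally, the exact values are obtained by matching the bound $2q^{k+1}+2$ with a construction of that size. For $v=5$ ($k=2$) the full polar image can be kept: pairwise disjoint lines contained in a common plane of $\PG(4,\F_q)$ number at most one, so the only obstructions are incidences of the form ``a line of $\mathcal{S}$ lies in some plane $Z^\perp$'', which a careful choice of $\mathcal{S}$ and of the polarity (e.g.\ one attached to an orthogonal form) avoids, giving $\smax_q(5,3)=2q^3+2$. For $(7,5)_2$ a construction of size $34=2\cdot2^4+2$ is more ad hoc, and confirming that $34$ is also an upper bound in this single case --- that is, that no extremal codeword can be attached to two maximal partial-spread layers here --- is expected to need a computer search (integer linear programming, or isomorph-free generation over $\PG(6,\F_2)$). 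The main obstacle is, I expect, twofold: (a) the odd-case lower-bound constructions, i.e.\ controlling the cross-distances between the two polar partial-spread layers and, for $v=5$ and $(7,5)_2$, pushing the size all the way to $2q^{k+1}+2$, together with the exhaustive verification in the latter case; and (b) in the even case, quantifying precisely how a side-layer codeword depletes the central layer, so as to carry the mixed-to-constant-dimension reduction through for $v\ge8$ (and, when $q=2$, already for $v=6$).
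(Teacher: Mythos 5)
Your skeleton matches the paper's: diameter restrictions confine an optimal $(v,M,v-2)_q$ code to the central layers plus at most one codeword of low and one of high dimension, and your odd-case upper bound is essentially the paper's argument (your hole count showing that a $(k+2)$-dimensional codeword, meeting the pairwise disjoint $k$-dimensional codewords in exactly one point each, forces $\delta_k\le q^{k+1}$, plus duality, gives $M\le 2q^{k+1}+2$ in all cases). But two genuine gaps remain, and you name them yourself as ``obstacles'' without resolving them. First, in the even case the entire content of the reduction $\smax_q(2k,2k-2)=\smax_q(2k,2k-2;k)$ is the quantitative claim that a side codeword ``depletes the central layer by more than it contributes''; this is not at all automatic, since being disjoint from a fixed $(k-1)$-space is a weak-looking condition on a family of $k$-spaces pairwise meeting in at most a point. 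The paper proves it by a double count: every $X\in\mathcal{C}_k$ lies in a $(2k-2)$-dimensional subspace meeting the side codeword $X_0$ in dimension $t-2$, and comparing the number of such subspaces with the degree of each $X$ gives $\#\mathcal{C}_k\le q^{2k}$ (with a separate hyperplane argument for $t=1$, $k=3$); moreover this alone does not suffice when $\delta_{k-1}\ge 2$, where the paper needs the sharper bound $\#\mathcal{C}_k\le q^{2k}-q^{k+1}$ (via simultaneous complements of two disjoint $(k-1)$-spaces) for $\delta_{k-1}\le\frac12 q^{k+1}$ and a point-degree bound for larger $\delta_{k-1}$, and a further improvement by one in the case $\delta_{k-2}+\delta_{k-1}=\delta_{k+1}+\delta_{k+2}=1$. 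None of this range analysis appears in your sketch, so part~(i) is unproved.

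Second, your odd-case lower bounds are only asserted. For general $k$ you propose a maximal partial $(k-1)$-spread together with its image under ``a suitable polarity, deleting the few elements whose cross-intersection has dimension $\ge 2$''; to reach $2q^{k+1}+1$ you may delete at most one element from the union of the two layers, and you give no reason such a spread/polarity pair exists --- indeed, whether the two layers can be combined in full is exactly the open ``doubling'' question raised in the paper. The paper instead shortens the lifted Gabidulin code $\gabidulin_{2k+2,k+1,k}$ in a non-incident point--hyperplane pair, using its self-duality (Theorem~\ref{thm:gabidual}) to know that the hyperplane contains exactly $q^{k+1}$ codewords, and appends $H\cap S$. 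Likewise, for $v=5$ your claim that an orthogonal polarity and a careful choice of spread avoid all incidences ``line of $\mathcal{S}_1$ inside a plane of $\mathcal{S}_2^\perp$'' is unverified; the paper gets $\smax_q(5,3)=2q^3+2$ by an explicit construction (the $(6,q^6,4;3)_q$ Gabidulin code extended by two planes through the special plane, then shortened). Finally, a small confusion: for $(7,5)_2$ the computer is needed only for the \emph{construction} of a code of size $34$; the upper bound $\le 34$ already follows from the general argument you gave, so no additional exhaustive verification of the bound is required.
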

\begin{proof}
  \eqref{d=v-2:even}
  The evaluation of $\smax_q(4,2)$ is a special case of
  Theorem~\ref{thm:d=2} (but could also be easily accomplished
  ad hoc). From now on we assume $k\geq 3$.

  In the constant-dimension case the bounds
  $q^{2k}+1\leq\smax_q(v,v-2;k)\leq(q^k+1)^2$ are well-known; see
  e.g.\ \cite{smt:fq11proc}. In order to show that the upper bound
  holds in the mixed-dimension case as well, let $\mathcal{C}$ be an
  optimal $(2k,M,2k-2)_q$ code and suppose $\mathcal{C}$ contains a
  codeword $X_0$ with $t=\dim(X_0)\neq k$. By duality we may assume
  $t\leq k-1$, and we certainly have $t\geq k-2$, since otherwise
  $\mathcal{C}_{k-1}=\mathcal{C}_k=\emptyset$ and
  $\#\mathcal{C}\leq 1+\smax_q\bigl(2k,2k-2;[k+1,2k]\bigr)
  =1+\smax_q\bigl(2k,2k-2;[0,k-1]\bigr)=1+\smax_q(2k,2k-2;k-1)
  \leq 1+\frac{q^{2k}-1}{q^{k-1}-1}\leq q^{2k}$, contradicting the
  optimality of $\mathcal{C}$. The codewords in
  $\mathcal{C}_{k-2}\cup\mathcal{C}_{k-1}\neq\emptyset$ must be
  mutually disjoint and also disjoint from every codeword in
  $\mathcal{C}_k$. Moreover, $\delta_{k-2}\delta_{k-1}=0$ and
  $\delta_{k-2}\leq 1$.
 
  
  Our strategy now is to bound the size of the ``middle layer''
  $\#\mathcal{C}_k$ in terms of $t=\dim(X_0)$. If this leads to 
  a sharp upper bound for $\mathcal{C}$, which conflicts with
  the best known lower bound for
  $\smax_q(2k,2k-2;k)$, we can conclude $\mathcal{C}=\mathcal{C}_k$,
  and hence $\smax_q(2k,2k-2)=\smax_q(2k,2k-2;k)$.

  Since any two codewords
  in $\mathcal{C}_k$ span at least a ($2k-1$)-dimensional space, we
  have that any ($2k-2$)-dimensional subspace of $V$ contains at most
  one codeword of $\mathcal{C}_k$. Conversely, every codeword of
  $\mathcal{C}_k$, being disjoint from $X_0$, is contained in a
  ($2k-2$)-dimensional subspace intersecting $X_0$ in a subspace of
  the smallest possible dimension, viz.\
  $\max\{t-2,0\}$.\footnote{The condition $\dim(S\cap X_0)=t-2$ is
    of course equivalent to $S+X_0=V$, but we need the former
    for the counting argument.} Denoting by
  $\mathcal{S}$ the set of all such ($2k-2$)-dimensional subspaces and
  by $r$ the (constant) degree of $\#\mathcal{C}_k$ with respect to
  $\mathcal{S}$, we get the bound
  $\#\mathcal{C}_k\leq\#\mathcal{S}/r$. It is easily seen that
  \begin{align*}
    \#\mathcal{S}&=\gauss{t}{t-2}{q}q^{2(2k-t)}=\gauss{t}{2}{q}q^{4k-2t},\\
    r&=\gauss{t}{t-2}{q}q^{2(k-t)}=\gauss{t}{2}{q}q^{2k-2t}
  \end{align*}
  for $t\geq 2$ and hence $\#\mathcal{C}_k\leq q^{2k}$ in this
  case. 

  For $t=1$ (the case $t=0$ does not occur on account of our
  assumption $k\geq 3$) we are in the case $k=3$ and have instead
  $\#\mathcal{S}=(1+q+q^2+q^3+q^4)q^4$, $r=q^2+q$, yielding only the
  weaker bound $\#\mathcal{C}_3\leq\lfloor
  q^3(1+q+q^2+q^3+q^4)/(1+q)\rfloor=q^6+q^4+q^2-q$. However, this
  bound can be sharpened by using for $\mathcal{S}$ the set of
  hyperplanes $H$ not incident with the point $X_0$ and the bound
  $\#(\mathcal{C}\cap H)\leq q^3+1$. The improved bound is
  $\#\mathcal{C}_3\leq(q^3+1)\#\mathcal{S}/r=(q^3+1)q^5/q^2=q^6+q^3$.

  These bounds are sufficient to conclude the proof in the case where
  at most one codeword of dimension $\neq k$ exists. But for the case
  $\delta_{k-1}\geq 2$ and its dual, and for several cases having
  $\delta_{k-2}+\delta_{k-1}=\delta_{k+1}+\delta_{k+2}=1$ we need
  better bounds.

  First we do the case $\delta_{k-1}\geq 2$. Let $X_1,X_2$ be two
  distinct codewords in $\mathcal{C}_{k-1}$. Then $X_1,X_2$ are
  disjoint, and every $X\in\mathcal{C}_k$ is simultaneously disjoint
  from both $X_1$ and $X_2$. In this case we can bound
  $\#\mathcal{C}_k$ in the same way as above, using for $\mathcal{S}$
  the set of ($2k-2$)-dimensional subspaces $S$ of $V$ satisfying
  $\dim(S\cap X_1)=\dim(S\cap X_2)=k-3$. Since the number of
  simultaneous complements of two disjoint lines in $\PG(n-1,\F_q)$ is
  $q^{2n-7}(q^2-1)(q-1)$,\footnote{This is probably well-known and
    perhaps most easily established by counting triples $(L_1,L_2,U)$
    of mutually skew subspaces of $\F_q^n$ with
    $\dim(L_1)=\dim(L_2)=2$, $\dim(U)=n-2$ in two ways: Using
    canonical matrices, the number $\#\bigl\{(L_1,L_2,S)\bigr\}
    =\#\bigl\{(S,L_1,L_2)\bigr\}$ of
    such triples is easily found to be
    $\gauss{n}{n-2}{q}q^{2(n-2)}(q^{n-2}-1)(q^{n-2}-q)$. Dividing this
    number by
    $\#\bigl\{(L_1,L_2)\bigr\}=\gauss{n}{2}{q}\gauss{n-2}{2}{q}q^4$
    gives $q^{2n-7}(q^2-1)(q-1)$, as asserted.}
  we obtain
  \begin{equation*}
    \#\mathcal{S}=\gauss{k-1}{k-3}{q}^2q^{2\cdot 6-7}(q^2-1)(q-1)
                   =\gauss{k-1}{2}{q}^2q^5(q^2-1)(q-1).
  \end{equation*}
  The degree $r$ of $X\in\mathcal{C}_k$ with respect to $\mathcal{S}$
  is equal to the number of ($k-2$)-dimensional subspaces of the
  $k$-dimensional space $V/X$ meeting the (not necessarily distinct)
  hyperplanes $H_1=(X_1+X)/X$ and $H_2=(X_2+X)/X$ in a
  ($k-3$)-dimensional space. By duality, $r$ is also equal to the
  number of lines in $\PG(k-1,\F_q)$ off two points $P_1$, $P_2$
  (which may coincide or not), and
  hence
  \begin{align*}
    r&\geq\gauss{k}{2}{q}-2\gauss{k-1}{1}{q}+1
       =\gauss{k-1}{2}{q}q^2-\left(\gauss{k-1}{1}{q}-1\right)\\
     &=\frac{(q^k-q)(q^{k-1}-q)}{(q^2-1)(q-1)}-\frac{q^{k-1}-q}{q-1}
       =\frac{(q^{k-1}-q)(q^k-q^2-q+1)}{(q^2-1)(q-1)},\\
\#\mathcal{C}_k&\leq\frac{q^4(q^{k-1}-1)^2(q^{k-2}-1)}{q^k-q^2-q+1}
                 =\frac{q^{3k}-q^{2k+2}-2q^{2k+1}+2q^{k+3}+q^{k+2}-q^4}{q^k-q^2-q+1}\\
     &\leq q^{2k}-q^{k+1},
  \end{align*}
  where the last inequality follows from a straightforward
  computation.\footnote{The inequality is sharp precisely in the case
    $k=3$, all $q$.}  This new bound is sufficient for the range
  $2\leq\delta_{k-1}\leq\frac{1}{2}q^{k+1}$ (since we may obviously
  assume $\delta_{k+1}\leq\delta_{k-1}$), but there remains a gap to
  the known upper bound $\delta_{k-1}\leq q^{k+1}+q^2$ for $k\geq 4$,
  respectively, $\delta_2\leq q^4+q^2+1$ for $k=3$. However, for
  $\delta_{k-1}>\frac{1}{2}q^{k+1}$ the standard method to bound
  $\#\mathcal{C}_k$ in terms of the point degrees can be used: Since
  the $\delta_{k-1}(1+q+\dots+q^{k-2})$ points covered by the
  codewords in $\mathcal{C}_{k-1}$ must have degree $0$ in
  $\mathcal{C}_k$, we obtain
  \begin{align*}
    \#\mathcal{C}_k
    &\leq\frac{q^k+1}{q^k-1}\left(q^{2k}-1-\delta_{k-1}(q^{k-1}-1)\right)
    =(q^k+1)^2-\delta_{k-1}\cdot\frac{(q^k+1)(q^{k-1}-1)}{q^k-1}\\
         &<q^{2k}+1-\delta_{k-1}
           \left(\frac{(q^k+1)(q^{k-1}-1)}{q^k-1}-\frac{4}{q}\right).
  \end{align*}
  The factor of $\delta_{k-1}$ is $\geq 2$ in all cases except $q=2$,
  $k=3$, leading to the desired contradiction
  $\#\mathcal{C}\leq\#\mathcal{C}_k+2\delta_{k-1}\leq q^{2k}$. in the
  exceptional case we have $\#\mathcal{C}\leq
  64+1+\frac{1}{7}\delta_{k-1}\leq 68$, which also does the job.


  It remains to consider the cases with
  $\delta_{k-2}+\delta_{k-1} =\delta_{k+1}+\delta_{k+2}=1$.  We may
  assume $k\geq 4$ and need only improve the previously established
  bound $\#\mathcal{C}_k\leq q^{2k}$ by one. We denote the unique
  codewords of dimensions $t<k$ and $u>k$ by $X_0$ and $Y_0$,
  respectively.  From the proof we have $\#\mathcal{C}_k=q^{2k}$ if
  and only if every ($2k-2$)-dimensional subspace of $V$ meeting $X_0$
  in a $t-2$-dimensional space contains a codeword of
  $\mathcal{C}_k$. Since
  $\dim(X_0\cap Y_0) =\frac{1}{2}\bigl(t+u-\sdist(X_0,Y_0)\bigr)
  \leq\left\lfloor\frac{1}{2}\bigl(k-1+k+2-(2k-2)\bigr)\right\rfloor
  =\left\lfloor\frac{3}{2}\right\rfloor=1$,
  there exists a ($2k-2$)-dimensional subspace $S\subset V$ such that
  $\dim(S\cap X_0)=t-2$ and $\dim(S\cap Y_0)\geq u-1$. Then
  $\dim(S+Y_0)\leq 2k-1$ and hence $S+Y_0$, and a fortiori $S$, cannot
  contain a codeword of $\mathcal{C}_k$.\footnote{Note that $X+Y_0=V$
    for every $X\in\mathcal{C}_k$, the dual of $X\cap X_0=\{0\}$.}
  This gives $\#\mathcal{C}_k\leq q^{2k}-1$ and $\#\mathcal{C}\leq
  q^{2k}+1$, as desired.

  Finally, the equality $\smax_2(6,4)=\smax_2(6,4;3)=77$ follows from
  $\smax_2(6,4;3)>2^6+2^3$, which implies $\delta_i=0$ for
  $i\in\{1,2,4,5\}$. The lower bound for $\smax_q(6,4)$, $q\geq 3$
  follows from the corresponding bound for $\smax_q(6,4;3)$,
  established in \cite[Th.~2]{smt:fq11proc}.

  \eqref{d=v-2:odd} First we show $\smax_q(2k+1,2k-1)\geq 2q^{k+1}+1$.
  For this we take the $q$-ary lifted $(2k+2,q^{2(k+1)},2k;k+1)$
  Gabidulin code $\gabidulin=\gabidulin_{2k+2,k+1,k}$, which contains
  $q^{k+1}$ codewords passing through any point $P$ outside the
  special subspace $S=\{0\}\times\F_{q^{k+1}}$ and similarly $q^{k+1}$
  codewords in any hyperplane $H\nsubseteq S$; cf.\
  Theorem~\ref{thm:gabidual}. Among these points and hyperplanes we
  choose a non-incident pair $(P,H)$ and shorten the code $\gabidulin$
  in $(P,H)$; cf.\ Section~\ref{ssec:shortpunct}. The code
  $\mathcal{C}=\gabidulin|_H^P\cup\{H\cap S\}$ then has the required
  parameters $(2k+1,2k-1,2q^{k+1}+1)$.\footnote{The dimension
    distribution of $\mathcal{C}$ is $\delta_k=q^{k+1}+1$,
    $\delta_{k+1}=q^{k+1}$, and $\delta_t=0$ otherwise. It is also
    possible to add a ($k+1$-dimensional space, either through $P$ or
    in $H$, to $\gabidulin$ before
    shortening.}
  Let us remark here that $\mathcal{C}$ admits only extensions
  (without decreasing the minimum distance) by ($k+1$)-dimensional
  subspaces of $\PG(H)$ containing $S\cap H$ (which is $k$-dimensional) and by
  $k$-dimensional subspaces contained in the ($k+1$)-dimensional
  subspace $(S+P)\cap H$. Hence, if $k\geq 3$ then it is impossible to
  extend $\mathcal{C}$ by more than one subspace and improve the
  construction.\footnote{For $k=2$ we can extend by a line in $H$ meeting $S$
    in a point and a plane in $H$ above $S$; cf.\ a subsequent part
    of the proof.}

  Next we establish the upper bound $\smax_q(2k+1,2k-1)\leq
  2q^{k+1}+2$. Let $\mathcal{C}$
  be an optimal $(2k+1,M,2k-1)_q$ code. If $\mathcal{C}$ contains only
  codewords of dimensions $k$ and $k+1$, the bound follows from
  $\smax_q(2k+1,2k;k)=\smax_q(2k+1,2k;k+1)=q^{k+1}+1$. 
  Otherwise we must have $\delta_t=0$ for
  $t\notin\{k-1,k,k+1,k+2\}$, since a codeword of dimension $t\leq
  k-2$ forces $\mathcal{C}_k=\emptyset$, contradicting $M\geq
  2q^{k+1}+1$ (and likewise, using duality, for $t\geq k+3$). The
  remaining cases to consider are
  $(\delta_{k-1},\delta_{k+2})=(1,0)$, $(0,1)$ or $(1,1)$. In these
  cases the bound is established using the remarks preceding the
  theorem. For example, if $X\in\mathcal{C}_{k-1}$ exists then all
  $Y\in\mathcal{C}_{k+1}$ must be disjoint from $X$. Since the dual
  of a maximal partial ($k-1$)-spread in $\PG(2k,\F_q)$ necessarily
  covers every point, this excludes the possibility
  $\delta_{k+1}=q^{k+1}+1$.

  It remains to construct codes meeting the upper bound for
  $k=2$, all $q$ and for $k=3$, $q=2$. 

  First we consider the case $k=2$. Using the shortening construction
  from Section~\ref{ssec:shortpunct}, it suffices to exhibit a
  $(6,2q^3+2,6;3)_q$ constant-dimension code consisting of $q^3+1$
  planes through a point $P$ and $q^3+1$ planes in a hyperplane $H$ of
  $\PG(5,\F_q)$ with $P\notin H$. This can be accomplished by adding
  to the $(6,q^6,4;3)_q$ Gabidulin code two planes $E$, $E'$ with
  $\sdist(E,E')=4$ meeting the special plane $S=\{0\}\times\F_{q^3}$
  in distinct lines $L\neq L'$, respectively, and choose for
  shortening a point $P\in E\setminus S$ and a hyperplane
  $H\supset E'$ with $H\cap S=L'$. Clearly $P,H$ can be taken as
  non-incident, and then shortening the $(6,q^6+2,4;3)_q$ code
  $\gabidulin\cup\{E,E'\}$ in $(P,H)$ yields the desired
  $(5,2q^3+2,3)_q$ code.\footnote{Another 
  construction for $(5,2q^3+2,3)_q$ codes was recently found by 
  Cossidente, Pavese and Storme \cite{new_preprint}.}

In the case $k=3$, $q=2$ we found several $(7,34,5)_2$ codes by a
computer search.\footnote{In fact, we succeeded in a complete
    classification of codes with these parameters.  The total number
    of equivalence classes is $20$.  Details will be given in
    \cite{smt:v7}.} An example is the subspace code given
  by the row spaces of the matrices
  \[\setlength{\arraycolsep}{2pt}
  \begin{array}{ccccc}
\left(\begin{smallmatrix}
1 & 0 & 0 & 0 & 0 & 1 & 1 \\
0 & 1 & 0 & 1 & 1 & 1 & 1 \\
0 & 0 & 1 & 1 & 1 & 0 & 0
\end{smallmatrix}\right)\text{,} &
\left(\begin{smallmatrix}
0 & 1 & 0 & 0 & 1 & 0 & 0 \\
0 & 0 & 1 & 0 & 0 & 1 & 0 \\
0 & 0 & 0 & 1 & 0 & 0 & 1
\end{smallmatrix}\right)\text{,} &
\left(\begin{smallmatrix}
1 & 0 & 0 & 1 & 1 & 0 & 1 \\
0 & 1 & 0 & 0 & 0 & 1 & 1 \\
0 & 0 & 1 & 1 & 1 & 0 & 1
\end{smallmatrix}\right)\text{,} &
\left(\begin{smallmatrix}
0 & 0 & 0 & 0 & 1 & 0 & 0 \\
0 & 0 & 0 & 0 & 0 & 1 & 0 \\
0 & 0 & 0 & 0 & 0 & 0 & 1
\end{smallmatrix}\right)\text{,} &
\left(\begin{smallmatrix}
1 & 0 & 0 & 0 & 1 & 1 & 1 \\
0 & 1 & 0 & 1 & 0 & 1 & 0 \\
0 & 0 & 1 & 1 & 1 & 1 & 1
\end{smallmatrix}\right)\text{,} \\[1.5ex]
\left(\begin{smallmatrix}
1 & 0 & 0 & 0 & 0 & 0 & 0 \\
0 & 0 & 1 & 0 & 0 & 1 & 1 \\
0 & 0 & 0 & 1 & 0 & 1 & 0
\end{smallmatrix}\right)\text{,} &
\left(\begin{smallmatrix}
1 & 0 & 0 & 0 & 0 & 1 & 0 \\
0 & 1 & 0 & 1 & 0 & 0 & 1 \\
0 & 0 & 1 & 0 & 1 & 0 & 1
\end{smallmatrix}\right)\text{,} &
\left(\begin{smallmatrix}
1 & 0 & 0 & 0 & 1 & 1 & 0 \\
0 & 1 & 0 & 1 & 1 & 0 & 0 \\
0 & 0 & 1 & 0 & 1 & 1 & 1
\end{smallmatrix}\right)\text{,} &
\left(\begin{smallmatrix}
1 & 0 & 0 & 0 & 1 & 0 & 1 \\
0 & 1 & 1 & 0 & 0 & 0 & 1 \\
0 & 0 & 0 & 1 & 1 & 0 & 0
\end{smallmatrix}\right)\text{,} &
\left(\begin{smallmatrix}
1 & 0 & 1 & 0 & 0 & 1 & 0 \\
0 & 1 & 0 & 0 & 1 & 0 & 1 \\
0 & 0 & 0 & 1 & 1 & 1 & 0
\end{smallmatrix}\right)\text{,} \\[1.5ex]
\left(\begin{smallmatrix}
1 & 0 & 0 & 0 & 1 & 0 & 0 \\
0 & 0 & 1 & 0 & 0 & 0 & 1 \\
0 & 0 & 0 & 1 & 0 & 1 & 1
\end{smallmatrix}\right)\text{,} &
\left(\begin{smallmatrix}
1 & 0 & 0 & 1 & 1 & 1 & 0 \\
0 & 1 & 0 & 0 & 1 & 1 & 0 \\
0 & 0 & 1 & 0 & 1 & 0 & 0
\end{smallmatrix}\right)\text{,} &
\left(\begin{smallmatrix}
1 & 0 & 0 & 1 & 0 & 0 & 0 \\
0 & 1 & 0 & 0 & 1 & 1 & 1 \\
0 & 0 & 1 & 1 & 1 & 1 & 0
\end{smallmatrix}\right)\text{,} &
\left(\begin{smallmatrix}
0 & 1 & 0 & 0 & 0 & 0 & 0 \\
0 & 0 & 1 & 0 & 0 & 0 & 0 \\
0 & 0 & 0 & 1 & 0 & 0 & 0
\end{smallmatrix}\right)\text{,} &
\left(\begin{smallmatrix}
1 & 0 & 0 & 1 & 0 & 1 & 1 \\
0 & 1 & 0 & 0 & 0 & 1 & 0 \\
0 & 0 & 1 & 0 & 1 & 1 & 0
\end{smallmatrix}\right)\text{,} \\[1.5ex]
\left(\begin{smallmatrix}
1 & 0 & 1 & 0 & 1 & 0 & 0 \\
0 & 1 & 0 & 0 & 0 & 0 & 1 \\
0 & 0 & 0 & 1 & 1 & 1 & 1
\end{smallmatrix}\right)\text{,} &
\left(\begin{smallmatrix}
1 & 0 & 0 & 0 & 0 & 0 & 1 \\
0 & 1 & 1 & 0 & 1 & 1 & 1 \\
0 & 0 & 0 & 1 & 1 & 0 & 1 \\
\end{smallmatrix}\right)\text{,} \\[2ex]
\left(\begin{smallmatrix}
1 & 0 & 0 & 0 & 1 & 1 & 0 \\
0 & 1 & 0 & 0 & 1 & 0 & 0 \\
0 & 0 & 1 & 0 & 1 & 1 & 0 \\
0 & 0 & 0 & 1 & 1 & 1 & 0
\end{smallmatrix}\right)\text{,} &
\left(\begin{smallmatrix}
0 & 1 & 0 & 0 & 0 & 1 & 0 \\
0 & 0 & 1 & 0 & 0 & 0 & 1 \\
0 & 0 & 0 & 1 & 0 & 1 & 0 \\
0 & 0 & 0 & 0 & 1 & 0 & 0
\end{smallmatrix}\right)\text{,} &
\left(\begin{smallmatrix}
1 & 0 & 0 & 0 & 0 & 1 & 1 \\
0 & 1 & 0 & 0 & 0 & 1 & 1 \\
0 & 0 & 0 & 1 & 0 & 1 & 0 \\
0 & 0 & 0 & 0 & 1 & 1 & 1
\end{smallmatrix}\right)\text{,} &
\left(\begin{smallmatrix}
1 & 0 & 0 & 0 & 1 & 1 & 0 \\
0 & 1 & 0 & 0 & 0 & 0 & 0 \\
0 & 0 & 1 & 0 & 0 & 1 & 1 \\
0 & 0 & 0 & 1 & 1 & 0 & 1
\end{smallmatrix}\right)\text{,} &
\left(\begin{smallmatrix}
1 & 0 & 0 & 0 & 0 & 0 & 0 \\
0 & 1 & 1 & 0 & 0 & 0 & 0 \\
0 & 0 & 0 & 1 & 0 & 0 & 1 \\
0 & 0 & 0 & 0 & 0 & 1 & 0
\end{smallmatrix}\right)\text{,} \\[2ex]
\left(\begin{smallmatrix}
1 & 0 & 0 & 0 & 0 & 0 & 1 \\
0 & 1 & 0 & 0 & 1 & 0 & 1 \\
0 & 0 & 1 & 0 & 0 & 0 & 0 \\
0 & 0 & 0 & 1 & 0 & 1 & 1
\end{smallmatrix}\right)\text{,} &
\left(\begin{smallmatrix}
1 & 0 & 0 & 0 & 0 & 1 & 1 \\
0 & 1 & 0 & 0 & 1 & 0 & 1 \\
0 & 0 & 1 & 0 & 1 & 0 & 1 \\
0 & 0 & 0 & 1 & 1 & 1 & 1
\end{smallmatrix}\right)\text{,} &
\left(\begin{smallmatrix}
1 & 0 & 0 & 0 & 0 & 1 & 0 \\
0 & 1 & 0 & 1 & 0 & 0 & 0 \\
0 & 0 & 1 & 1 & 0 & 0 & 1 \\
0 & 0 & 0 & 0 & 1 & 0 & 1
\end{smallmatrix}\right)\text{,} &
\left(\begin{smallmatrix}
1 & 0 & 0 & 0 & 1 & 0 & 1 \\
0 & 1 & 0 & 1 & 0 & 0 & 1 \\
0 & 0 & 1 & 0 & 0 & 0 & 1 \\
0 & 0 & 0 & 0 & 0 & 1 & 0
\end{smallmatrix}\right)\text{,} &
\left(\begin{smallmatrix}
1 & 0 & 0 & 0 & 1 & 0 & 0 \\
0 & 0 & 1 & 0 & 1 & 1 & 0 \\
0 & 0 & 0 & 1 & 0 & 0 & 0 \\
0 & 0 & 0 & 0 & 0 & 0 & 1
\end{smallmatrix}\right)\text{,} \\[2ex]
\left(\begin{smallmatrix}
1 & 0 & 0 & 1 & 0 & 0 & 1 \\
0 & 1 & 0 & 1 & 0 & 1 & 0 \\
0 & 0 & 1 & 0 & 0 & 1 & 0 \\
0 & 0 & 0 & 0 & 1 & 1 & 0
\end{smallmatrix}\right)\text{,} &
\left(\begin{smallmatrix}
1 & 0 & 1 & 0 & 0 & 0 & 0 \\
0 & 1 & 1 & 0 & 1 & 0 & 0 \\
0 & 0 & 0 & 1 & 1 & 0 & 0 \\
0 & 0 & 0 & 0 & 0 & 1 & 1
\end{smallmatrix}\right)\text{,} &
\left(\begin{smallmatrix}
1 & 0 & 0 & 1 & 0 & 1 & 0 \\
0 & 1 & 0 & 0 & 0 & 1 & 0 \\
0 & 0 & 1 & 1 & 0 & 0 & 0 \\
0 & 0 & 0 & 0 & 1 & 0 & 1
\end{smallmatrix}\right)\text{,} &
\left(\begin{smallmatrix}
1 & 0 & 0 & 1 & 1 & 0 & 1 \\
0 & 1 & 0 & 1 & 1 & 0 & 0 \\
0 & 0 & 1 & 1 & 0 & 0 & 1 \\
0 & 0 & 0 & 0 & 0 & 1 & 1
\end{smallmatrix}\right)\text{,} &
\left(\begin{smallmatrix}
1 & 0 & 1 & 0 & 0 & 0 & 1 \\
0 & 1 & 0 & 0 & 0 & 1 & 1 \\
0 & 0 & 0 & 1 & 0 & 0 & 0 \\
0 & 0 & 0 & 0 & 1 & 0 & 0
\end{smallmatrix}\right)\text{,} \\[2ex]
\left(\begin{smallmatrix}
1 & 0 & 0 & 1 & 0 & 0 & 0 \\
0 & 1 & 0 & 0 & 0 & 0 & 1 \\
0 & 0 & 1 & 0 & 0 & 0 & 0 \\
0 & 0 & 0 & 0 & 1 & 1 & 1
\end{smallmatrix}\right)\text{,} &
\left(\begin{smallmatrix}
1 & 0 & 0 & 0 & 0 & 0 & 0 \\
0 & 1 & 0 & 0 & 0 & 0 & 0 \\
0 & 0 & 1 & 1 & 1 & 0 & 0 \\
0 & 0 & 0 & 0 & 0 & 0 & 1
\end{smallmatrix}\right)\text{.}
\end{array}
\]

  The proof of Theorem~\ref{thm:d=v-2} is now complete.
\end{proof}

\begin{remark}
  \label{rmk:d=v-2:even}
  It seems likely that $\smax_q(v,v-2)=\smax_q(v,v-2;k)$, $k=v/2$, holds
  for $v=6$ as well.\footnote{For $q=2$ this is known, as we stated in
    the theorem.} From the proof of Part~\eqref{d=v-2:even} it is
  clear that an optimal $(6,M,4)_q$ code has $\delta_1\leq 1$,
  $\delta_5\leq 1$, $\delta_2=\delta_4=0$, and hence
  $\smax_q(6,4)\leq\smax_q(6,4;3)+2$.  The proof also shows that
  $\smax_q(6,4)>\smax_q(6,4;3)$ requires $\smax_q(6,4;3)\leq q^6+q^3$,
  and hence $\smax_q(6,4)=\smax_q(6,4;3)$ would follow from an
  improved lower bound on $\smax_q(6,4;3)$.
  
  In those cases where $\smax_q(v,v-2)=\smax_q(v,v-2;k)$ one may ask
  whether all optimal codes must have constant dimension. The
  parameter set $(v,d;k)_q=(8,6;4)_2$ illustrates the difficulties in
  answering this question: Presently it is only known that
  $257\leq\smax_2(8,6)=\smax_2(8,6;4)\leq 289$, with the corresponding
  Gabidulin code $\gabidulin=\gabidulin_{8,4,3}$ of size $256$
  accounting for the lower bound. If the true value turns out to be
  $257$ then there are both constant-dimension and mixed-dimension
  codes attaining the bound, since $\gabidulin$ can be extended by any at
  least $2$-dimensional subspace of its special solid $S$; cf.\
  Section~\ref{ssec:gabidulin}.\footnote{Apart from such extensions,
    it is also possible to extend $\gabidulin$ by any $5$- or
    $6$-dimensional space containing $S$ and by a solid meeting $S$
    in a plane. Extensions by more than one codeword are not possible
    (i.e.\ the minimum distance would necessarily be $<6$).}
\end{remark}

\begin{remark}
  \label{rmk:d=v-2:delta}
  The dimension distributions realized by $(2k+1,2q^{k+1}+2,2k-1)_q$
  codes, provided that codes with these parameters actually exist, can
  be completely determined. 

  In the case $k=2$ these are all four
  distributions that have ``survived'' the proof of
  Theorem~\ref{thm:d=v-2}\eqref{d=v-2:odd}, viz.\
  $(0,0,q^3+1,q^3+1,0,0)$, $(0,1,q^3+1,q^3,0,0)$,
  $(0,0,q^3,q^3+1,1,0)$ and $(0,1,q^3,q^3,1,0)$.\footnote{Recall from
    the proof that $\delta_{1}=1$ forces $\delta_{3}\leq q^{3}$, and
    similarly for $\delta_{4}=1$.} This can be seen as follows:

  The shortening construction used in the proof yields a code
  $\mathcal{C}$ in $\PG(H)$ with $\delta_2=\delta_{3}=q^3+1$ and such
  that the layers $\mathcal{C}_2$ and $\mathcal{C}_3$ are (dual)
  partial spreads of the type discussed before
  Theorem~\ref{thm:d=v-1}. Let $E_1,L_1$ be the special plane
  (containing the holes) and the moving line of $\mathcal{C}_2$, and
  $L_2,E_2$ the special line (meet of the dual holes) and moving plane
  of $\mathcal{C}_3$. Then, using the notation in the proof of
  Theorem~\ref{thm:d=v-2}, $E_1=(S+P)\cap H$, $L_1=E\cap H$,
  $L_2=L'$, $E_2=E'$. In other words, $L_1,L_2$ meet in a point
  (the point $L\cap L'\in S$), $E_1=L_1+L_2$,
  and $E_2$ is some other plane through $L_2$.
  Replacing the plane $E_2\in\mathcal{C}$ by any point $Q\in
  L_2\setminus L_1$ minimum distance $3$, since $Q\in E_1\setminus
  L_1$ is a hole of $\mathcal{C}_2$ and $E_2$ is the only plane in
  $\mathcal{C}_3$ containing $Q$,\footnote{For the latter note that
    the points of degree $1$ with respect to $\mathcal{C}_3$ are those
    on $L_2$, since they are contained in $q^2$ dual holes of
  $\mathcal{C}_3$.} and hence 
  gives a $(5,2q^3+2,3)_q$ code with dimension distribution
  $(0,1,q^3+1,q^3,0,0)$. Similarly, replacing $L_1$ by any solid $T$
  containing $E_1$ but not $E_2$ produces 
  a $(5,2q^3+2,3)_q$ code with dimension distribution
  $(0,0,q^3,q^3+1,1,0)$. Finally, since $\sdist(Q,T)=3$, the code
  $\{Q\}\cup(\mathcal{C}_2
  \setminus\{L_1\})\cup(\mathcal{C}_3\setminus\{E_2\})\cup\{T\}$ has
  parameters $(5,2q^3+2,3)_q$ as well and dimension distribution
  $(0,1,q^3,q^3,1,0)$.

  For $k\geq 3$ the only possible dimension distribution is
  $\delta_k=\delta_{k+1}=q^{k+1}+1$. In order to see this, we may
  suppose by duality that
  $(\delta_{k-1},\delta_k,\delta_{k+1},\delta_{k+2})=(1,q^{k+1}+1,q^{k+1},0)$
  or $(1,q^{k+1},q^{k+1},1)$ and must reduce this ad absurdum. In the
  first case, the codeword of dimension $k-1$ must be disjoint from
  the codewords in $\mathcal{C}_k$, which form a maximal partial
  spread in $\PG(2k,\F_q)$. This is impossible, since $k-1\geq 2$ but
  the set of holes of $\mathcal{C}_k$ cannot contain a line. In the
  second case, let
  $X\in\mathcal{C}_{k-1}$, $Y\in\mathcal{C}_{k+2}$ be the unique
  codewords of their respective dimensions, and note that the
  codewords in $\mathcal{C}_{k-1}\cup\mathcal{C}_k$ are mutually
  disjoint and meet $Y$ in at most a point. Since $\delta_k=q^{k+1}$
  is one less than the size of a maximal partial spread,
  $\mathcal{C}_k$ has $1+q+\dots+q^k$ holes. The set of holes contains
  $X$ and at least $\#Y-(q^{k+1}+1)=q+q^2+\dots+q^k$ further points
  from $Y$. This gives the inequality
  \begin{equation*}
    1+2(q^1+q^2+\dots+q^{k-2})+q^{k-1}+q^k\leq 1+q+\dots+q^k,
  \end{equation*}
  which is impossible for $k\geq 3$.
\end{remark}

\begin{question}
  By our preceding considerations, any $(2k+1,2q^{k+1}+2,2k-1)_q$ code
  has the form $\mathcal{C}=\mathcal{S}_1 \cup \mathcal{S}_2^\perp$,
  where $\mathcal{S}_1,\mathcal{S}_2$ are maximal partial
  ($k-1$)-spreads in $\PG(2k,\F_q)$.  To construct a
  $(2k+1,2q^{k+1}+2,2k-1)_q$ code in this way, one has to pick
  $\mathcal{S}_1,\mathcal{S}_2$ in such a way that the intersection of
  any element of $\mathcal{S}_1$ with any element of
  $\mathcal{S}_2^\perp$ is at most a point.  We refer to this as
  \emph{doubling construction}.  The natural question is: Which $q$
  and $k$ admit a doubling construction?

  Apart from $k=2$, where a doubling construction is possible for
  every $q$, the only decided case is $(q,k) = (2,3)$; cf.\
  Theorem~\ref{thm:d=v-2}(\ref{d=v-2:odd}).  On the other hand, by the
  usual interpretation of $q=1$ as the binary Hamming space, the case
  $q=1$ corresponds to a $(2k+1,4,2k-1)$ binary block code.  This code
  exists for $k \leq 2$, but it does not exist for $k\geq 3$.  This
  might be a hint in the direction that the doubling construction is
  not possible for all combinations of $q\geq 2$ and $k$.
\end{question}

\subsection{Subspace distance $2$}\label{ssec:d=2}
\label{subsec_distance_2}

The projective geometry $\PG(v-1,\F_q)$ or, in the vector space view,
the set of $\F_q$-subspaces of $\F_q^v$ under set inclusion,
forms a finite modular geometric
lattice. In particular $\PG(v-1,\F_q)$ is a ranked poset with   
rank function $X\mapsto\dim(X)$. The theory of finite posets
can be used to determine the numbers $\smax_q(v,2)$ and the
corresponding optimal codes, as outlined in \cite{ahlswede2009error}.
The proof uses a result of Kleitman \cite{kleitman1975extremal} on
finite posets with the so-called LYM property, of which the geometries
$\PG(v-1,\F_q)$ are particular examples. Partial results on the numbers
$\smax_q(v,2)$ can also be found in \cite[Sect.~4]{gabidulin2009algebraic}.

The classification of optimal $(v,M,2)_q$ subspace codes is stated as
Theorem~\ref{thm:d=2} below. We will provide a self-contained
proof of the theorem.
The underlying idea is to use information on the 
intersection patterns of a $(v,M,2)_q$ code with the various maximal chains of
subspaces of $\F_q^v$ for a bound on the code size $M$. Recall that a
maximal chain in a poset is a totally ordered subset which is maximal
with respect to set inclusion among all such subsets. The maximal
chains in $\PG(v-1,\F_q)$ have the form
$\mathcal{K}=\{X_0,X_1,\dots,X_v\}$ with $\dim(X_i)=i$ and $X_i\subset
X_{i+1}$.

If we assign to a subspace $X$ as weight $w(X)$ the reciprocal of the number
of maximal chains containing $X$, we can express the code size as
\begin{equation*}
  \#\mathcal{C}=\sum_{X\in\mathcal{C}}1=\sum_{X\in\mathcal{C}}w(X)
\cdot\#\{\mathcal{K};X\in\mathcal{K}\}
=\sum_{\mathcal{K}}\left(\sum_{X\in\mathcal{C}\cap\mathcal{K}}w(X)\right).
\end{equation*}
Since $w(X)=n_i$ depends only on $i=\dim(X)$, the inner sums are all
alike, and it turns out that they attain a simultaneous maximum at
some subspace code, which then of course must be optimal. For other
parameters the same method could in principle be applied using
suitably chosen families of subsets of the lattice $\PG(v-1,\F_q)$,
but it seems difficult to find families producing tight
bounds.\footnote{Usually much is lost through the fact that no
  subspace code can maximize all inner sums simultaneously.}
 


\begin{theorem}
  \label{thm:d=2}
  \begin{enumerate}[(i)]
  \item\label{d=2:even} If $v=2k$ is even then
    \begin{equation*}
      \smax_q(v,2)=\sum_{\substack{0\leq i\leq v\\i\equiv 0\bmod 2}}\gauss{v}{i}{q}.
    \end{equation*}
    The unique (as a set of subspaces) 
    optimal code in $\PG(v-1,\F_q)$ consists of all subspaces $X$ of $\F_q^v$
    with $\dim(X)\equiv k\bmod 2 $, and thus of all even-dimensional
    subspaces for $v\equiv0\bmod 4 $ and of all odd-dimensional
    subspaces for $v\equiv2\bmod 4 $.
  \item\label{d=2:odd} $v=2k+1$ is odd then
    \begin{equation}
      \label{eq:kleitman}
      \smax_q(v,2)=\sum_{\substack{0\leq i\leq v\\i\equiv
          0\bmod 2}}\gauss{v}{i}{q}
      =\sum_{\substack{0\leq i\leq v\\i\equiv
          1\bmod 2}}\gauss{v}{i}{q},
    \end{equation}
    and there are precisely two distinct optimal codes in
    $\PG(v-1,\F_q)$, containing all even-dimensional and all
    odd-dimensional subspaces of $\F_q^v$, respectively. Moreover
    these two codes are isomorphic.
  \end{enumerate}
\end{theorem}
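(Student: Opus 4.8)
The plan is to turn the chain-weighting heuristic sketched above into a rigorous upper bound and then solve the resulting finite extremal problem. Write $c_{j}=\prod_{m=1}^{j}\gauss{m}{1}{q}$ for the number of maximal chains in a $j$-dimensional $\F_{q}$-space. Through a fixed $i$-dimensional subspace $X\subseteq V$ there pass exactly $c_{i}c_{v-i}$ maximal chains of $\PG(V)$, so the weight of $X$ is $w(X)=n_{i}:=(c_{i}c_{v-i})^{-1}$; counting the maximal chains through a variable $i$-subspace in two ways gives $c_{v}=\gauss{v}{i}{q}c_{i}c_{v-i}$, hence $n_{i}=\gauss{v}{i}{q}/c_{v}$. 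Since on a maximal chain $\mathcal{K}=\{X_{0}\subset\dots\subset X_{v}\}$ one has $\sdist(X_{i},X_{j})=\lvert i-j\rvert$, every $q$-ary code $\mathcal{C}$ with $\sdist(\mathcal{C})\ge2$ meets $\mathcal{K}$ in a set whose dimensions form a subset $S_{\mathcal{K}}\subseteq\{0,\dots,v\}$ containing no two consecutive integers. Plugging this into $\#\mathcal{C}=\sum_{\mathcal{K}}\sum_{i\in S_{\mathcal{K}}}n_{i}$ yields $\#\mathcal{C}\le\max\bigl\{\sum_{i\in S}\gauss{v}{i}{q}:S\subseteq\{0,\dots,v\}\text{ sparse}\bigr\}$, with equality exactly when $S_{\mathcal{K}}$ attains this maximum for \emph{every} maximal chain $\mathcal{K}$.

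The combinatorial heart is then to evaluate $\max_{S}\sum_{i\in S}\gauss{v}{i}{q}$ over sparse $S$ and to identify all maximizers. The basic tool is the rapid-growth estimate $\gauss{v}{i}{q}/\gauss{v}{i-1}{q}=(q^{v-i+1}-1)/(q^{i}-1)>q$ for $1\le i\le\lfloor v/2\rfloor$ (and its mirror for $i\ge\lceil v/2\rceil$); in particular $\gauss{v}{i}{q}$ exceeds $\sum_{j<i}\gauss{v}{j}{q}$ whenever $i\le\lfloor v/2\rfloor$, and $\gauss{v}{v/2}{q}>\gauss{v}{v/2-1}{q}+\gauss{v}{v/2+1}{q}$ when $v$ is even. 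Using this one argues: a maximizing sparse $S$ may be taken \emph{maximal} (else insert a term), so all its gaps have size $2$ or $3$ with $\min S\le1$ and $\max S\ge v-1$; and a short case analysis of local exchanges — sliding an extremal element one step inward, resolving a gap of size $3$ in favour of the correct parity, or replacing a near-central pair $\{v/2-1,v/2+1\}$ by $\{v/2\}$ — strictly increases $\sum_{i\in S}\gauss{v}{i}{q}$ unless $S$ already equals $\{i:i\equiv k\bmod 2\}$ (for $v=2k$) or one of the two parity classes $\{i\text{ even}\},\{i\text{ odd}\}$ (for $v=2k+1$, which give equal sums since $\gauss{v}{i}{q}=\gauss{v}{v-i}{q}$ interchanges them). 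Carrying out this analysis cleanly — in particular ruling out that in the odd case any other maximal sparse set ties with the two parity classes — is the step I expect to cost the most effort; everything else is routine.

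Finally, tightness and classification. The code $\mathcal{C}^{*}=\bigcup_{i\equiv k\bmod 2}\subspaces{V}{i}$ has $\sdist(\mathcal{C}^{*})=2$ (distinct subspaces whose dimensions share a parity are at subspace distance $\ge2$, and the value $2$ occurs) and size $\sum_{i\equiv k}\gauss{v}{i}{q}$, matching the bound; for odd $v$ the polarity $\pi$ of Theorem~\ref{thm:autos} carries the even-dimensional code isometrically onto the odd-dimensional one, and these have equal size by \eqref{eq:kleitman}. For the classification, note that in an optimal $\mathcal{C}$ every maximal chain must meet $\mathcal{C}$ in a \emph{maximizing} sparse set, and that $\{0\}$ and $V$ lie on every maximal chain: according to whether $\{0\}\in\mathcal{C}$ or not, the maximizing set used by each chain must contain, respectively avoid, the index $0$, so one and the same parity class $P$ is used by all chains. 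Hence $\mathcal{C}$ contains every subspace of dimension in $P$ (each lies on some maximal chain) and nothing else, i.e.\ $\mathcal{C}=\subspaces{V}{P}$. For even $v$ the maximizer is unique, forcing $P=\{i:i\equiv k\bmod 2\}$ and hence the stated unique optimal code; for odd $v$ both parity classes do occur, giving precisely the two optimal codes, which are isomorphic via $\pi$.
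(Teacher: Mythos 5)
Your proposal follows essentially the same route as the paper's proof: the same chain-weighting identity (each maximal chain contributing $\sum_{i\in S_{\mathcal{K}}}\gauss{v}{i}{q}/c_v$), the same reduction to maximizing $\sum_{i\in S}\gauss{v}{i}{q}$ over dimension sets $S$ without consecutive integers, the same growth estimate $\gauss{v}{i}{q}>q\,\gauss{v}{i-1}{q}$ for $i\leq\lfloor v/2\rfloor$, and the same classification step exploiting that $\{0\}$ and $V$ lie on every maximal chain. The one part you only sketch --- that the maximizing sparse sets are exactly the indicated parity class(es) --- is handled with comparable brevity in the paper, and your exchange argument together with the stated inequalities does close it, so there is no genuine gap.
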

\begin{proof}
  Since the collineation group of
  $\PG(v-1,\F_q)$ is transitive on subspaces of fixed dimension, the
  number $n_i$ of maximal chains through a
  subspace $X$ of dimension $i$ does not depend on the choice of
  $X$. Further, since each maximal chain passes through a unique
  subspace of dimension $i$, we must have $n_i=n/\gauss{v}{i}{q}$,
  where $n$ denotes the total number of maximal chains. Hence
  \eqref{eq:kleitman} can be rewritten as
  \begin{equation}
    \label{eq:kleitman2}
    \#\mathcal{C}=\frac{1}{n}
    \sum_{\mathcal{K}=\{X_0,\dots,X_v\}}
    \left(\sum_{\substack{i\in\{0,\dots,v\}\\
        X_i\in\mathcal{C}}}\gauss{v}{i}{q}\right)
  \end{equation}
  In order to maximize one of the inner sums in \eqref{eq:kleitman2},
  the best we can do (remember the constraint $d\geq 2$) is to choose
  $\mathcal{C}$ such that either
  $\mathcal{C}\cap\mathcal{K}=\{X_0,X_2,X_4,\dots\}$ or
  $\mathcal{C}\cap\mathcal{K}=\{X_1,X_3,X_5,\dots\}$,
  depending on which of the sums
  $\sum_{i\text{ even}}\gauss{v}{i}{q}$,
  $\sum_{i\text{ odd}}\gauss{v}{i}{q}$ is larger.

  For odd $v$ both sums are equal, since
  $\gauss{v}{i}{q}=\gauss{v}{v-i}{q}$. Hence the inner sums are
  maximized by either choice, and for simultaneous maximization
  of all inner sums it is necessary and sufficient
  that the code $\mathcal{C}$ consists either of all even-dimensional
  subspaces or of all odd-dimensional subspaces of
  $\F_q^v$.\footnote{Since $\{0\}$ and $\F_q^v$ belong to all maximal
    chains, the choice of $\mathcal{C}\cap\mathcal{K}$ for one chain
    determines all others.}

  For even $v=2k$ we use that the Gaussian binomial coefficients
  satisfy  
  $\gauss{v}{i}{q}>q\gauss{v}{i-1}{q}$ for $1\leq i\leq k$; cf.\ the
  proof of Lemma~\ref{lma:unimodal}.
  Together with symmetry this implies
  $\gauss{v}{k}{q}>\gauss{v}{k-1}{q} +\gauss{v}{k+1}{q}$ and
  $\gauss{v}{k-2t}{q}+\gauss{v}{k+2t}{q}
  >\gauss{v}{k-2t-1}{q}+\gauss{v}{k+2t+1}{q}$
  for $1\leq t\leq(k-1)/2$. It follows that
  $\sum_{i\equiv k\bmod2}\gauss{v}{i}{q}>\sum_{i\equiv
    k+1\bmod 2}\gauss{v}{i}{q}$
  and that the unique subspace code $\mathcal{C}$ simultaneously
  maximizing all inner sums in \eqref{eq:kleitman2} consists of all
  subspaces $X$ of $\F_q^v$ with $\dim(X)\equiv k\bmod 2$.
\end{proof}

\section{Bounds and Classification Results for Small
  Parameters}\label{sec:special}

In this section we present the best currently known bounds for the
numbers $\smax_2(v,d)$, $v\leq 7$. In most of those cases, where the
numbers $\smax_2(v,d)$ are known, we also provide the classification
of the optimal subspace codes.

Before turning attention to the binary case $q=2$, let us
remark that the results of Section~\ref{sec:general} determine the
numbers $\smax_q(v,d)$ for all $q$ and $v\leq 5$; see
Table~\ref{tbl:smaxq}.
\begin{table}[htbp]
  \centering
  \(
  \begin{array}{|c||c|c|c|c|}
    \hline
    v\backslash d&2&3&4&5\\\hline\hline
    3&q^2+q+2&2&\multicolumn{1}{c}{}&\\\cline{1-4}
    4&q^4+q^3+2q^2+q+3&q^2+1&q^2+1&\\\hline
    5&q^6+q^5+3q^4+3q^3+3q^2+2q+3&2q^3+2&q^3+1&2\\\hline
  \end{array}
  \)\\[1ex]
  \caption{The numbers $\smax_q(v,d)\label{tbl:smaxq}$ for $v\leq 5$} 
\end{table}
Regarding the corresponding classification, we remark that for
$(v,d)\in\{(3,2), (4,2), (5,2)\}$ the optimal codes are unique up to
subspace code isomorphism (cf.\ Theorem~\ref{thm:d=2}); those for
$(v,d)\in\{(3,3),(5,5)\}$ are classified by
Theorem~\ref{thm:d=v}\eqref{d=v:odd};\footnote{The different
  isomorphism types are represented by $\bigl\{\{0\},\F_q^3\bigr\}$
  and a non-incident point-line pair in $\PG(2,\F_q)$, respectively,
  by $\bigl\{\{0\},\F_q^5\bigr\}$, a non-incident point-solid pair and
  a complementary line-plane pair in $\PG(4,\F_q)$.} those for
$(v,d)=(4,3)$ (and essentially for $(v,d)=(4,4)$ as well) have been
classified for $q\leq 7$ as part of the classification of translation
planes of small order (cf.\ Theorems~\ref{thm:d=v}\eqref{d=v:even}
and~\ref{thm:d=v-1}\eqref{d=v-1:even}).

The remainder of this section is devoted exclusively to the case
$q=2$. With a few notable exceptions, the numbers $\smax_2(v,d)$ are
known for $v\leq 7$; see Table~\ref{tbl:smax2}.
 
\begin{table}[htbp]
  \centering
  \(
  \begin{array}{|c||c|c|c|c|c|c|}
    \hline
    v\backslash d&2&3&4&5&6&7\\\hline\hline
    3&8(1)&2(2)&\multicolumn{1}{c}{}&\multicolumn{1}{c}{}&\multicolumn{1}{c}{}
                           &\\\cline{1-4}
    4&37(1)&5(3)&5(1)&\multicolumn{1}{c}{}&\multicolumn{1}{c}{}&\\\cline{1-5}
    5&187(1)&18(24298)&9(7)&2(3)&\multicolumn{1}{c}{}&\\\cline{1-6}
    6&1521(1)&\text{104--118}&77(4)&9(4)&9(1)&\\\hline
    7&14606(1)&\text{593--776}&\text{330--407}&34(20)&17(928)&2(4)\\\hline
  \end{array}
  \)\\[1ex]
  \caption{$\smax_2(v,d)\label{tbl:smax2}$ and isomorphism types
    of optimal codes for $v\leq 7$} 
\end{table}

The exact values in the table come from Section~\ref{sec:general}.
The number of isomorphism types of optimal
$\bigl(v,\smax_2(v,d),d\bigr)_2$ codes is given in parentheses.
The remaining entries in the table will be derived in the
rest of this section, except for the number of isomorphism types
in the two cases $(v,d)=(7,5)$ and $(7,6)$, which will be derived
in a subsequent paper \cite{smt:v7}.

Regarding the classification of the optimal codes corresponding to the
exact values in the table, we
have that the codes for $d=2$ are unique (Theorem~\ref{thm:d=2}) and
those for $d=v\in\{3,5,7\}$ are classified into $2$, $3$ and $4$
isomorphism types, respectively
(Theorem~\ref{thm:d=v}\eqref{d=v:odd}). The codes for $(v,d)=(4,4)$,
$(6,6)$ are unique, since they correspond to the unique line spread in
$\PG(3,\F_2)$, respectively, the unique plane spread in $\PG(5,\F_2)$;
cf.\ Theorem~\ref{thm:d=v}\eqref{d=v:even}. For $(v,d)=(4,3)$ there
are $3$ different isomorphism types, represented by (i) a line spread
$\mathcal{S}=\{L_1,L_2,L_3,L_4,L_5\}$, (ii) the lines
$L_1,L_2,L_3,L_4$ and a point on $L_5$ and (iii) the lines
$L_1,L_2,L_3$, a point $P\in L_4$ and a plane $E\supset L_5$ with
$P\notin E$. For the proof of this assertion we use that by
Theorem~\ref{thm:d=v-1}\eqref{d=v-1:even} the dimension distribution
of an optimal $(4,5,3)_2$ code up to duality must be one of
$(\delta_1,\delta_2,\delta_3)=(0,5,0)$, $(1,4,0)$, $(1,3,1)$ and that
the corresponding codes form a single $\GL(4,\F_2)$-orbit.  The latter
has already been noted for the line spread and can be seen
for the other two configurations as follows: The (element-wise) stabilizer in
$\GL(4,\F_2)$ of $3$ pairwise skew lines $L_1,L_2,L_3$, which may be
taken as the row spaces of $\left(
  \begin{smallmatrix}
    1&0&0&0\\
    0&1&0&0
  \end{smallmatrix}\right)$, $\left(
  \begin{smallmatrix}
    0&0&1&0\\
    0&0&0&1
  \end{smallmatrix}\right)$, $\left(
  \begin{smallmatrix}
    1&0&1&0\\
    0&1&0&1
  \end{smallmatrix}\right)$, is conjugate to the subgroup
formed by all block-diagonal matrices
of the form $\left(
  \begin{smallmatrix}
    \mat{A}&\\
    &\mat{A}
  \end{smallmatrix}\right)$ with $\mat{A}\in\GL(2,\F_2)$.
This subgroup acts regularly on the $6$
points in $L_4\cup L_5$, as is easily verified,\footnote{The $6$
  points have the form $\F_2(\vek{x},\vek{y})$ with
  $\vek{x},\vek{y}\in\F_2^2$ nonzero and distinct, so that 
  regularity follows from the doubly-transitive action of
  $\GL(2,\F_2)$ on $\F_2^2\setminus\{\vek{0}\}$.}
and leaves $\{L_4,L_5\}$ invariant.\footnote{This follows from the
  fact that $L_4\cup L_5$ is uniquely a union of two lines; in
  other words, the spread containing $L_1,L_2,L_3$ is uniquely
  determined.}
Hence the stabilizer of $L_1,L_2,L_3,L_4$ (and $L_5$) acts transitively on
$L_5$, showing uniqueness of the code with dimension distribution
$(1,4,0)$. Moreover, the subgroup of $\GL(4,\F_2)$ fixing
$\{L_1,L_2,L_3\}$ set-wise and the point $P$ is isomorphic to
$\mathrm{S}_3$, and hence there exists $\mat{M}\in\GL(4,\F_2)$
interchanging $L_1$, $L_2$ and fixing $L_3$, $P$.\footnote{Using
  coordinates as above and writing $P=\F_2(\vek{x}|\vek{y})$, 
  we have $\mat{M}=\left(
    \begin{smallmatrix}
      &\mat{A}\\\mat{A}
    \end{smallmatrix}\right)$, where $\mat{A}\in\GL(2,\F_2)$ is
  the ``transposition'' satisfying $\vek{x}\mat{A}=\vek{y}$,
  $\vek{y}\mat{A}=\vek{x}$.}  The matrix $\mat{M}$ cannot fix all
three points on $L_3$ (otherwise it would fix all points in the
plane $L_3+P$ and hence also $L_1,L_2$). Since the three planes
containing $L_5$ are transversal to $L_3$ and one of them (the
plane containing $P$) is fixed by $\mat{M}$, the other two planes
must be switched by $\mat{M}$. This shows that the code with
dimension distribution $(1,3,1)$ is unique as well.

For $(v,d)=(6,5)$ a similar argument shows that there are $4$
isomorphism types of optimal codes, unique codes with dimension distribution
$(\delta_2,\delta_3,\delta_4)=(0,9,0)$, $(1,8,0)$ and two
non-isomorphic codes with distribution $(1,7,1)$. Taking the ambient
space as $\F_8\times\F_8$, $\F_8=\F_2[\alpha]$ with
$\alpha^3+\alpha+1=0$,
the latter two codes can be obtained from
the partial plane spread $\bigl\{\F_8(1,y);y\in\F_8^\times\bigr\}$ by
adding the line $\{0,\alpha,\alpha^2,\alpha^4\}\times\{0\}$ and
either the solid $\F_2\times\F_8$ or $\F_2\alpha^3\times\F_8$.

The remaining cases, which are more complex,
are described in separate subsections. Their analysis often required
computer calculations. The computations for Section~\ref{ssec:v=7,d=4}
were done in SageMath (\texttt{www.sagemath.org}).
For isomorphism checks and the computation of
automorphism groups, we used nauty \cite{mckay-piperno60} and the
algorithm described in~\cite{feulner13} (based on~\cite{feulner09},
see also~\cite{feulner-diss}).  To find subspace codes of maximum
possible size, we used different approaches.

\subsection{The case $(v,d)=(7,4)$}\label{ssec:v=7,d=4}

This case is in a sense the most challenging,
since its exact resolution will very likely encompass an answer
to the existence question for a $2$-analogue of the Fano plane. 
\begin{theorem}
  \label{thm:v=7,d=4}
  We have $330\leq \smax_2(7,4)\le 407$.
\end{theorem}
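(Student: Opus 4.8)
The plan is to establish the two inequalities by entirely separate arguments.

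\emph{Lower bound.} It suffices to produce one $(7,330,4)_2$ code. The natural source is the plane layer: a binary $q$-analogue of the Fano plane would be a $(7,381,4;3)_2$ constant-dimension code, $381=\gauss{7}{2}{2}/\gauss{3}{2}{2}$, and although its existence is open, plane codes of size well above $330$ are known. Such a code covers every point of $\PG(6,\F_2)$ many times over, so one cannot append a point or a hyperplane; but one can try to append a $4$-dimensional subspace meeting every codeword in at most a point---equivalently, containing none of the lines already used by the plane-codewords---or a $5$-dimensional subspace containing no codeword, each of which raises the size by one without lowering the distance. If no such ad hoc extension is available for a given code, a $(7,330,4)_2$ code, of constant or mixed dimension, can instead be found by a computer search that fixes a suitable automorphism group and solves the resulting integer linear feasibility problem; this is the route followed in \cite{smt:v7}.

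\emph{Upper bound, dimension distribution.} Let $\mathcal{C}$ be an optimal $(7,M,4)_2$ code and write $\delta_k$ for the number of its $k$-dimensional codewords. Because $d=v-3=4$, the metric subspaces $\subspaces{V}{\{0,1\}}$ and $\subspaces{V}{\{6,7\}}$ have diameter $2<4$ by Lemma~\ref{lma:smax}\eqref{smax:diam}, so $\mathcal{C}$ contains at most one codeword of dimension $\le1$ and at most one of dimension $\ge6$; moreover a codeword of dimension $0$ (resp.\ $7$) forces $\delta_2=\delta_3=0$ (resp.\ $\delta_4=\delta_5=0$). Hence in every case $M\le2+\delta_2+\delta_3+\delta_4+\delta_5$. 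The polarity $\pi$ shows that $\delta_2$ and $\delta_5$ are at most the maximal size of a partial line spread in $\PG(6,\F_2)$, while the Johnson-type bound \eqref{eq:schonheim} gives $\delta_3,\delta_4\le\smax_2(7,4;3)\le381$.

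\emph{Upper bound, the core estimate.} The sum of these four bounds is far larger than $407$, so the whole difficulty lies in exploiting how the layers obstruct one another. The $\delta_3$ plane-codewords meet pairwise in at most a point and therefore use $7\delta_3$ pairwise distinct lines of $\PG(6,\F_2)$; a line-codeword must be skew to each plane-codeword and meet each $4$-dimensional codeword in at most a point; a $4$-dimensional codeword contains none of those $7\delta_3$ lines and lies on any fixed line together with no more codewords than a maximal partial line spread of $\PG(4,\F_2)$ has members; and, dually, every $5$-dimensional subspace of $V$ contains at most one $4$- and at most one $5$-dimensional codeword. Translating all of these and their $\pi$-images into linear inequalities in $\delta_2,\dots,\delta_5$, reinforcing them by Bonferroni-type counting of point degrees in the plane layer (Lemma~\ref{lma:bonferroni}) and by the shortening estimate of Lemma~\ref{lma:shortpunct}\eqref{l:short} (each non-incident point--hyperplane pair $(P,H)$ yields a $\bigl(6,\deg(P)+\deg(H),\ge3\bigr)_2$ code, so $\deg(P)+\deg(H)\le\smax_2(6,3)$, and averaging over such pairs bounds $M$), and finally feeding the resulting constraints into the Delsarte-type linear-programming bound for mixed-dimension codes, is designed to pin the maximum of $\delta_2+\delta_3+\delta_4+\delta_5$ down to $405$, giving $\smax_2(7,4)\le407$.

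\emph{Main obstacle.} The genuine difficulty is that neither bound is close to tight: $\smax_2(7,4;3)$---and hence $\smax_2(7,4)$---is effectively tied to the unresolved existence question for a binary $q$-analogue of the Fano plane, so an exact determination is beyond current reach, while the layer-interaction inequalities above are surely not best possible either. I expect that narrowing $[330,407]$ will demand genuinely new constructions on one side and new non-existence arguments (or a substantially stronger linear-programming bound) on the other, rather than a mere refinement of the tools assembled here.
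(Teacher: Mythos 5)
Your lower bound does not actually produce a $(7,330,4)_2$ code. The best known $(7,M,4;3)_2$ constant-dimension codes have $M=329$, not ``well above $330$'', so you cannot start from a plane code that already meets the target; and the extensions you propose (a $4$-dimensional subspace containing none of the $7\delta_3$ covered lines, or a $5$-dimensional subspace containing no codeword) are not shown to exist for any specific code, while the fallback ``computer search as in \cite{smt:v7}'' is an appeal to authority, not an argument. The paper's construction is simpler and unconditional: adjoin the full ambient space $V=\F_2^7$ to a $(7,329,4;3)_2$ code, which is always admissible because $\sdist(X,V)=7-\dim(X)=4$ for every plane codeword $X$, giving $329+1=330$ with nothing left to verify.

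Your upper bound is a plan rather than a proof. The reduction $M\le 2+\delta_2+\delta_3+\delta_4+\delta_5$ is fine, but none of the quantitative layer-interaction bounds is actually derived, and ``feeding the constraints into the Delsarte-type linear-programming bound'' is precisely the method that previously gave only $\smax_2(7,4)\le 776$; there is no reason offered why your list of constraints pins $\delta_2+\delta_3+\delta_4+\delta_5$ to $405$. In fact the paper's explicit bounds $\delta_3\le f(\delta_4)$, $\delta_3\le g(\delta_2)$, $\delta_3\le h(\delta_5)$ (obtained from line-counting arguments and Lemma~\ref{lma:bonferroni}), combined with an exhaustive optimization over $(\delta_2,\delta_5)$, yield only $\delta_2+\delta_3+\delta_4+\delta_5\le 406$, so your ``$2+405$'' bookkeeping cannot be expected to close; the value $407$ comes from a further refinement you are missing, namely that every distribution with $\delta_2+\dots+\delta_5\ge 405$ has $\delta_5>0$, and a $5$-dimensional codeword is at distance at most $3$ from every subspace of dimension $\ge 6$, so such a code can be extended by at most one additional codeword (necessarily of dimension $\le 1$), not two. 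A further weakness: the shortening inequality $\deg(P)+\deg(H)\le\smax_2(6,3)$ rests on a quantity only known to lie in $[104,118]$, which is far too coarse to drive the bound on its own. Without the explicit lemmas bounding $\delta_3$ in terms of $\delta_2,\delta_4,\delta_5$ and without the final $\delta_5>0$ refinement, the inequality $\smax_2(7,4)\le 407$ is not established by your outline.
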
 
The lower bound is realized by adding the whole space $V=\F_2^7$ to
the best currently known $(7,329,4;3)_2$ constant-dimension code
\cite{braun-reichelt14,lt:0328,mt:alb80}.\footnote{Note added in
  proof: Recently a $(7,333,4;3)_2$ constant-dimension code has been
  found \cite{heinlein-kiermaier-kurz-wassermann16}. This improves the
  lower bound for $\smax_2(7,4)$ to $334$.}  If a $2$-analogue of the
Fano plane exists, the same construction yields a $(7,382,4)_2$ code.

Let us remark that the previously best known upper bound was
$\smax_2(7,4)\leq 776$ \cite{bachoc2013bounds}.
The proof of the new upper bound will be divided into a series of
lemmas. Let $\mathcal{C}$ be a $(7,M,4)_2$
subspace code with constant-dimension layers
$(\mathcal{C}_0,\dots,\mathcal{C}_7)$ and
dimension distribution $(\delta_0,\dots,\delta_7)$.
By duality, we may assume w.l.o.g. that $\delta_3\geq\delta_4$.

\begin{lemma}
  \label{lma:k=3,4}
  \begin{enumerate}[(i)]
  \item\label{lma:k=3,4:i} 
  $\delta_4\leq 190$ and $\delta_3\leq f(\delta_4)$ for the function
  $f\colon\{0,1,\dots,190\}\to\Z$ defined by $f(0)=381$ and
\begin{equation*}
   f(\delta)=
   \begin{cases}
     381-\left\lceil\frac{\delta(1+70i_0-\delta)}{7i_0(i_0+1)}\right\rceil
   &\text{if\quad $35(i_0-1)+1\leq\delta\leq 35i_0$, $1\leq i_0\leq 4$},\\
   381-\delta&\text{if\quad $141\leq\delta\leq 190$.}     
 \end{cases}
\end{equation*}
\item\label{lma:k=3,4:ii}
  $\delta_3+\delta_4\leq 381$; equality can hold only in the
constant-dimension case (i.e., for a $2$-analogue of the Fano plane
and its dual).  
\end{enumerate}
\end{lemma}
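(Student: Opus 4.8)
The plan is to reduce everything to a double count of the $\gauss{7}{2}{2}=2667$ lines (2‑dimensional subspaces) of $\PG(6,\F_2)$. From $\sdist(\mathcal{C})\ge 4$ one first reads off the three intersection constraints that drive the proof: any two planes in $\mathcal{C}_3$ meet in at most a point (since $\sdist=6-2\dim(X\cap Y)\ge 4$), any two solids in $\mathcal{C}_4$ meet in at most a line (since $\sdist=8-2\dim(X\cap Y)\ge 4$), and a plane of $\mathcal{C}_3$ and a solid of $\mathcal{C}_4$ meet in at most a point (here $\sdist=7-2\dim(X\cap Y)$ is automatically odd, so $\ge 4$ already forces $\dim(X\cap Y)\le 1$). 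Each plane contains $\gauss{3}{2}{2}=7$ lines and each solid $\gauss{4}{2}{2}=35$ lines; the first constraint says a line lies in at most one codeword of $\mathcal{C}_3$, and the third says that no line contained in a codeword of $\mathcal{C}_4$ can lie in a codeword of $\mathcal{C}_3$. Writing $\mathcal{L}$ for the set of lines covered by $\mathcal{C}_4$, this yields the master inequality $7\delta_3+\#\mathcal{L}\le 2667$, i.e.\ $\delta_3\le 381-\#\mathcal{L}/7$. (Equivalently, $\mathcal{C}_3\cup\mathcal{C}_4$ is a $(7,\delta_3+\delta_4,4;\{3,4\})_2$ code, so part~(ii) is exactly the claim $\smax_2(7,4;\{3,4\})\le 381$.)

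The remaining task is to bound $\#\mathcal{L}$ from below in terms of $\delta_4$. By the second constraint the $\delta_4$ many $35$‑element line sets $\{L:L\subseteq Y\}$, $Y\in\mathcal{C}_4$, have pairwise intersections of size at most $1$, so Lemma~\ref{lma:bonferroni} applies with $\overline\mu_1=35\delta_4$ and $\overline\mu_2=\binom{\delta_4}{2}$; this gives $i_0=1+\lfloor(\delta_4-1)/35\rfloor$ and $\#\mathcal{L}\ge \delta_4(1+70i_0-\delta_4)/\bigl(i_0(i_0+1)\bigr)$. Feeding this into the master inequality and using $\delta_3\in\Z$ produces $\delta_3\le 381-\lceil \delta_4(1+70i_0-\delta_4)/(7i_0(i_0+1))\rceil$, which is precisely the first branch of $f$, valid as long as $i_0\le 4$, i.e.\ $\delta_4\le 140$. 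For $141\le\delta_4\le 190$ this Bonferroni estimate degrades below $7\delta_4$ and is no longer good enough; there one argues instead directly that any $\delta_4$ solids of $\PG(6,\F_2)$ pairwise meeting in at most a line already cover at least $7\delta_4$ lines (so $\#\mathcal{L}\ge 7\delta_4$), which gives the sharper bound $\delta_3\le 381-\delta_4=f(\delta_4)$ in that range. Finally the bound $\delta_4\le 190$: for $\delta_4\le 140$ it is trivial, and for $\delta_4\ge 141$ we have just shown $\delta_3\le 381-\delta_4$, so the standing assumption $\delta_3\ge\delta_4$ forces $2\delta_4\le 381$, i.e.\ $\delta_4\le 190$. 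This establishes (i).

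Part~(ii) follows by summing the bound of~(i): $\delta_3+\delta_4\le f(\delta_4)+\delta_4$, and one checks $f(\delta)+\delta\le 381$ for all $\delta\in\{0,\dots,190\}$. For $141\le\delta\le 190$ this is an equality by the definition of $f$, but there the line‑covering argument of~(i) is in fact strict (already one solid covers $35>7$ lines, and the strictness persists), so equality in~(ii) is impossible in that range. For $1\le\delta_4\le 140$ one checks $\delta_4(1+70i_0-\delta_4)>7\delta_4\,i_0(i_0+1)$, equivalently $\delta_4<1+63i_0-7i_0^2$, which holds throughout the interval $\{35(i_0-1)+1,\dots,35i_0\}$ for each $i_0\le 4$; hence the ceiling in $f$ is $\ge\delta_4+1$ and $f(\delta_4)+\delta_4\le 380<381$. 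Only $\delta_4=0$ gives $f(0)+0=381$, in which case $\delta_3=381$ and $\mathcal{C}_3$ is a $2$‑analogue of the Fano plane; applying the polarity $\pi$ (Theorem~\ref{thm:autos}) accounts symmetrically for the dual possibility $\delta_3=0$.

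The step I expect to be the real obstacle is the regime $141\le\delta_4\le 190$. There the second‑order Bonferroni bound on $\#\mathcal{L}$ is only of order $35\delta_4/i_0$ with $i_0\approx 6$, which falls short of the $7\delta_4$ lines needed, and even sharpening $\overline\mu_2$ via the fact that a line lies in at most $9$ solids of $\mathcal{C}_4$ (a partial line‑spread bound in $\PG(4,\F_2)$) does not help, since $\binom{\delta_4}{2}$ remains the binding estimate. One therefore needs a genuinely geometric argument in $\PG(6,\F_2)$ showing that $\delta_4$ solids with pairwise intersections of dimension $\le 2$ must cover at least $7\delta_4$ lines — for instance by passing to a derived partial plane‑Steiner system, selecting one suitable plane inside each solid so that the union with $\mathcal{C}_3$ remains a set of planes meeting pairwise in at most a point, and then invoking the $q$‑Fano bound $381$. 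Making that selection work (the compatibility/conflict bookkeeping for solids sharing a common line) is where the work of the lemma really lies.
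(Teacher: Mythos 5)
Your framework coincides with the paper's: the double count of the $2667$ lines of $\PG(6,\F_2)$, the master inequality $7\delta_3+\#\mathcal{L}\leq 2667$, and the application of Lemma~\ref{lma:bonferroni} with $\overline{\mu}_1=35\delta_4$, $\overline{\mu}_2=\binom{\delta_4}{2}$ all match, and they correctly yield the first branch of $f$ for $\delta_4\leq 140$, as well as your deduction of $\delta_4\leq 190$ and of part~(ii) in the range $\delta_4\leq 140$. The genuine gap is exactly where you suspect it: the second branch, i.e.\ the claim that $\delta_4$ solids at mutual distance $\geq 4$ cover at least $7\delta_4$ lines when $141\leq\delta_4\leq 190$, is asserted but not proved. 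The remedy you sketch (selecting one plane inside each solid to build a partial plane system and invoking the bound $381$) is speculative and is not the route taken; moreover, since the $\delta_4\leq 190$ statement and the whole range $141\leq\delta_4\leq 190$ of $f$ rest on this unproven covering bound, part~(i) is incomplete as it stands.

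The missing idea is not a new geometric construction but a sharper second-moment bound fed into the very same Bonferroni lemma. The quantity $\mu_2$ is the number $e$ of edges of the distance-$4$ graph on $\mathcal{C}_4$, and each solid meets at most $7\cdot(21-1)=140$ other codewords of $\mathcal{C}_4$ in a line: dually the solids become planes pairwise meeting in at most a point, and the code planes through a fixed point project to a partial line spread of $\PG(5,\F_2)$, of size at most $21$. Hence $\mu_2=e\leq 70\delta_4$, which beats $\binom{\delta_4}{2}$ precisely for $\delta_4\geq 141$, and Lemma~\ref{lma:bonferroni} with $i_0=1+\lfloor 2\cdot 70\delta_4/(35\delta_4)\rfloor=5$ gives $c(\delta_4)\geq\frac{2(35\delta_4\cdot 5-70\delta_4)}{30}=7\delta_4$, i.e.\ the second branch of $f$. (Your proposed sharpening via ``a line lies in at most $9$ solids'' bounds multiplicities, not the pair count, so it indeed does not help.) The same degree bound also repairs part~(ii) in that range, where ``the strictness persists'' is not an argument: equality $\delta_3+\delta_4=381$ with $\delta_4\geq 141$ forces $c(\delta_4)=7\delta_4$ and $e=70\delta_4$, i.e.\ the distance-$4$ graph is $140$-regular; in the dual view every point on a codeword then lies on exactly $21$ code planes, these induce a full line spread of the quotient $\PG(5,\F_2)$, so every point of $\PG(6,\F_2)$ is covered and $\delta_4=127\cdot 21/7=381$, contradicting $\delta_4\leq 190$. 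Your treatment of the cases $\delta_4=0$ and $1\leq\delta_4\leq 140$ in part~(ii) is correct and agrees with the paper.
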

Part~\eqref{lma:k=3,4:ii} of Lemma~\ref{lma:k=3,4} will not be used in
the sequel, but has been included since it seems interesting in its
own right---showing $\smax_2(7,4;\{3,4\})\le 381$ and characterizing
the corresponding extremal case.
\begin{proof}[Proof of the lemma]
  The codewords in $\mathcal{C}_3$ cover each
  line of $\PG(6,\F_2)$ at most once. Codewords in $\mathcal{C}_4$ may
  cover a line multiple times (up to $9$ times, the size of a maximal
  partial spread in $\PG(4,\F_2)$), but at least they cannot cover the
  same line as a codeword in $\mathcal{C}_3$. Denoting by $c(\delta)$
  the minimum number of lines covered by $\delta$ solids
  $S_1,\dots,S_\delta$ in $\PG(6,\F_2)$ at mutual distance $\geq 4$,
  we have the bound $7\delta_3+c(\delta_4)\leq 7\cdot 381=2667$, the
  total number of lines in $\PG(6,\F_2)$. Thus $\delta_3\leq
  381-\lceil c(\delta_4)/7\rceil$, and any lower bound on
  $c(\delta)$ will yield a corresponding upper bound for
  $\delta_3$.

For lower-bounding $c(\delta)$ we use Lemma~\ref{lma:bonferroni}.
If $b_i$ is the number
of lines contained in exactly $i$ solids then
  \begin{align*}
    \sum_{i\geq 0}b_i&=2667,\\
    \sum_{i\geq 1} ib_i&=35\delta,\\
    \sum_{i\geq 2} \binom{i}{2}b_i&=e\leq\binom{\delta}{2},
  \end{align*}
  where $e$ denotes the number of edges of the distance-$4$ graph of
  $\{S_1,\dots,S_\delta\}$.
  Since the degree of $S_i$ in
  the distance-$4$ graph is at most $7\cdot(21-1)=140$,\footnote{The
    bound is the same as for the distance-$4$ graph of a set of planes
    at mutual distance $\geq 4$.}  a reasonable upper bound for the second
  binomial moment of $b_0,b_1,b_2,\dots$ is
  \begin{equation*}
    \overline{\mu}_2=
    \begin{cases}
      \delta(\delta-1)/2&\text{if $\delta\leq 140$},\\
      70\delta&\text{if $\delta\geq 141$}.
    \end{cases}
  \end{equation*}
Lemma~\ref{lma:bonferroni} gives
\begin{equation*}
   c(\delta)\geq
   \begin{cases}
\frac{\delta(1+70i_0-\delta)}{i_0(i_0+1)}
   &\text{if\quad $35(i_0-1)+1\leq\delta\leq 35i_0$, $1\leq i_0\leq 4$},\\
   7\delta&\text{if\quad $\delta\geq 141$.}     
   \end{cases}
\end{equation*}
Together with the
bound $\delta_3\leq 381-\lceil c(\delta_4)/7\rceil$ this
proves Part~\eqref{lma:k=3,4:i}.

Moreover, since $f(\delta)<381-\delta$ for $\delta\leq 140$ (as is
easily verified), we have the bound $\delta_3+\delta_4\leq 381$;
equality is possible only if $141\leq\delta_4\leq 190$ and the
distance-$4$ graph on $\mathcal{C}_4=\{S_1,\dots,S_{\delta_4}\}$ is
regular of degree $140$.\footnote{Going backwards,
  $\delta_3+\delta_4=381$ implies $c(\delta_4)=7\delta_4$ and
  $e=70\delta_4$.}  But this is absurd, since in the dual (plane) view
every point of $\PG(6,\F_2)$ would be covered by a plane and hence by
exactly $21$ planes (implying $\delta_4=381$). This
proves Part~\eqref{lma:k=3,4:ii}.
\end{proof}
For the next lemma recall that the size of a maximal partial line
spread in $\PG(6,\F_2)$ is $41$ (leaving $127-3\cdot 41=4=q^2$ holes,
the smallest number one can achieve for odd $v$; cf.\
\cite{beutelspacher75,eisfeld-storme00}). This implies $\delta_2\leq
41$ (and, by duality, also $\delta_5\leq 41$).
\begin{lemma}
  \label{lma:k=2,3}
  $\delta_3\leq g(\delta_2)$, where $g\colon\{0,1,\dots,41\}\to\Z$
  takes the following values:
\begin{gather*}
  \setlength{\arraycolsep}{1pt}
    \begin{array}{c||c|c|c|c|c|c|c|c|c|c|c|c|c|c|c|c|c|c}
      \delta&0&1&2&3&4&5&6&7&8&9&10&11&12&13&14&15&16&17\\\hline
      g(\delta)&381&354&328&304&281&260&240&221&203&186&171
                                   &157&145&134&124&115&107&101
    \end{array}\\
    \setlength{\arraycolsep}{1.4pt}
    \begin{array}{c||c|c|c|c|c|c|c|c|c|c|c|c|c|c|c|c|c|c|c|c|c|c|c|c}
      \delta&18&19&20&21&22&23&24&25&26&27&28&29&30&31&32&33&34&35&36&37&38&39&40&41\\\hline
      g(\delta)& 96& 93& 91& 90& 87& 77& 70& 61& 56& 48& 43& 25& 17&
      12& 9& 8& 5& 4& 3& 2& 2& 1& 1& 0
    \end{array}
\end{gather*}
\end{lemma}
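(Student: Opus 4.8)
The distance hypotheses immediately pin down the relevant geometry. Since $\sdist\ge 4$ forces $\dim(X\cap Y)=0$ for $2$-dimensional $X,Y$, the codewords of $\mathcal{C}_2$ form a partial line spread of $\PG(6,\F_2)$; write $B$ for their union, a set of $3\delta_2$ points that is a union of $\delta_2$ mutually skew lines. Likewise two codewords of $\mathcal{C}_3$ meet in at most a point, and every codeword of $\mathcal{C}_2$ is disjoint from every codeword of $\mathcal{C}_3$, so each $E\in\mathcal{C}_3$ lies inside the complement $B^{\mathrm c}:=\points\setminus B$, which has $127-3\delta_2$ points. Two observations will drive everything. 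First, two planes meeting in at most a point share no line, so $\mathcal{C}_3$ covers exactly $7\delta_3$ \emph{distinct} lines of $\PG(6,\F_2)$, all of them disjoint from $B$. Second, the planes of $\mathcal{C}_3$ through a fixed point $P$ meet pairwise exactly in $P$, hence project to pairwise disjoint lines of $\PG(V/P)\cong\PG(5,\F_2)$, whence $\deg_{\mathcal{C}_3}(P)\le 21$; more precisely each such plane uses up $3$ of the lines through $P$ that are disjoint from $B$, so $\deg_{\mathcal{C}_3}(P)\le\min\{21,\lfloor m_P/3\rfloor\}$, where $m_P$ is the number of lines through $P$ disjoint from $B$.

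\textbf{Plan.} I would derive three upper bounds for $\delta_3$ in terms of $\delta_2$, optimised for the small, large, and intermediate ranges of $\delta_2$, and let $g(\delta_2)$ be their minimum. (1) \emph{Line counting} (which I expect to be sharp for $0\le\delta_2\le 21$). The $7\delta_3$ lines covered by $\mathcal{C}_3$ are among the lines of $\PG(6,\F_2)$ disjoint from $B$, so $7\delta_3\le 2667-\#\{\text{lines meeting }B\}$. Each of the $\delta_2$ spread lines is met by exactly $187$ lines, and two skew lines span a $\PG(3,\F_2)$ with exactly $9$ common transversals; Lemma~\ref{lma:bonferroni} applied to the family $A_i=\{\text{lines meeting }L_i\}$ ($t=187$, $s=9$) lower-bounds $\#\{\text{lines meeting }B\}$, and for $\delta_2\le 21$ this reduces to $187\delta_2-9\binom{\delta_2}{2}$, giving $\delta_3\le\bigl\lfloor\bigl(2667-187\delta_2+9\binom{\delta_2}{2}\bigr)/7\bigr\rfloor$. (2) \emph{Point/packing counting} (sharp for large $\delta_2$). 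The $\delta_3$ planes of $\mathcal{C}_3$ are $7$-point sets inside $B^{\mathrm c}$ with pairwise intersections of size $\le 1$, so Lemma~\ref{lma:bonferroni} with $\mu_1=7\delta_3$, $\overline\mu_2=\binom{\delta_3}{2}$, $i_0=1+\lfloor(\delta_3-1)/7\rfloor$ gives
\[
   127-3\delta_2\ \ge\ \frac{2\bigl(7\delta_3 i_0-\binom{\delta_3}{2}\bigr)}{i_0(i_0+1)},
\]
which bounds $\delta_3$ from above. (3) \emph{Degree refinement} (to bridge the gap in the middle range). Summing $\deg_{\mathcal{C}_3}(P)\le\min\{21,\lfloor m_P/3\rfloor\}$ over $P\in B^{\mathrm c}$ yields $7\delta_3\le\sum_{P\in B^{\mathrm c}}\min\{21,\lfloor m_P/3\rfloor\}$; here $\sum_{P\in B^{\mathrm c}}(63-m_P)$ counts pairs $(P,\ell)$ with $P\in B^{\mathrm c}\cap\ell$ and $\ell\cap B\neq\emptyset$, and this can be bounded below using that through each point of $\langle L_i,L_j\rangle\setminus(L_i\cup L_j)$ passes exactly one transversal of $L_i,L_j$ and $\#\langle L_i,L_j\rangle=15$. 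Finally one evaluates $g(\delta_2)=\min\{(1),(2),(3)\}$ for $\delta_2=0,1,\dots,41$ — a short, computer-checkable tabulation — to recover the stated values.

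\textbf{Main obstacle.} The delicate part is the intermediate range (roughly $22\le\delta_2\le 28$), where neither the pure line count nor the pure point count is tight and one must use all of: a plane of $\mathcal{C}_3$ avoids $B$, its seven lines are disjoint from $B$ and pairwise concurrent, and the number of lines of $\PG(6,\F_2)$ meeting a union of $\delta_2$ mutually skew lines is controlled from below \emph{uniformly over all such partial spreads}. Making step (3) quantitative enough to match the table — i.e.\ extracting a sharp, spread-independent lower bound for $\sum_{P\in B^{\mathrm c}}(63-m_P)$ from the transversal count, and combining it cleanly with the degree cap $21$ — is where most of the work (and the only genuine subtlety) lies; the remaining ranges follow from straightforward applications of Lemma~\ref{lma:bonferroni}.
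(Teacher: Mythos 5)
Your steps (1) and (2) are sound and in fact coincide with two of the three bounds the paper combines: for $\delta_2\le 21$ your Bonferroni count of lines meeting the spread lines (with $t=187$, $s=9$, $i_0=1$) gives exactly the paper's first bound $\delta_3\le 381-\lceil\delta_2(383-9\delta_2)/14\rceil$, and your packing bound on the points covered by $\mathcal{C}_3$ is exactly the paper's third bound. The genuine gap is step (3), which is precisely where you locate the difficulty but which, as formulated, cannot deliver the table in the range $22\le\delta_2\le 27$. Two problems: first, the cap $\min\{21,\lfloor m_P/3\rfloor\}$ is vacuous, since $m_P\le 63$ forces $\lfloor m_P/3\rfloor\le 21$ anyway. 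Second, the aggregate lower bound you propose for $\sum_{P\in B^{\mathrm c}}(63-m_P)$ via the $9$ common transversals of each skew pair $L_i,L_j$ gives $\sum_{P\in B^{\mathrm c}}(63-m_P)\ge 3\delta_2 t-9\binom{\delta_2}{2}$ with $t=127-3\delta_2$, hence $7\delta_3\le b_0\le t(21-\delta_2)+3\binom{\delta_2}{2}$, and this is algebraically identical to your bound (1) in its $i_0=1$ form ($b_0\le 2667-187\delta_2+9\binom{\delta_2}{2}$): it is the same inclusion--exclusion counted from the other side, so (3) adds nothing. Concretely, on $22\le\delta_2\le 27$ the best your (1) and (2) can give (even using $i_0=2$ in (1)) is $88,79,71,64,57,50$, whereas the table requires $87,77,70,61,56,48$; your (2) is even vacuous there because the Bonferroni point bound never exceeds $49$ while $127-3\delta_2\ge 46$.

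What is missing is a \emph{per-point} (not aggregate) capacity estimate, which is the paper's second bound: since every one of the $63$ lines through a fixed point $P\in B^{\mathrm c}$ contains at most two of the $3\delta_2$ points of $B$, at least $\lceil 3\delta_2/2\rceil$ of them meet $B$, i.e.\ $m_P\le\lfloor(t-1)/2\rfloor$ for every $P$ in the hole set $T=B^{\mathrm c}$, $t=\#T=127-3\delta_2$ (equivalently: a point of $T$ lies on at most $\lfloor(t-1)/2\rfloor$ lines contained in $T$). Summing over $T$ gives $21\delta_3\le 3b_0\le t\lfloor(t-1)/2\rfloor$, i.e.\ $\delta_3\le\bigl\lfloor t\lfloor(t-1)/2\rfloor/21\bigr\rfloor$, and this single inequality reproduces exactly the entries $g(22),\dots,g(28)=87,77,70,61,56,48,43$. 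The per-point bound beats the aggregate one precisely when $3(\delta_2-1)\ge t$, i.e.\ $\delta_2\ge 22$, which explains why your scheme matches the paper outside the middle range but cannot close it; with this replacement for your step (3), taking the minimum of the three bounds over $\delta_2=0,\dots,41$ does recover the stated table.
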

\begin{proof}[Proof of the lemma]
  The $\delta_2$ codewords in $\mathcal{C}_2$ form a partial line spread
  in $\PG(6,\F_2)$. No plane $E\in\mathcal{C}_3$ can meet any line
  $L\in\mathcal{C}_2$, since this would result in
  $\sdist(L,E)\in\{1,3\}$. Hence, if $g(\delta)$ denotes an upper
  bound for the number of planes at pairwise subspace distance $\geq
  4$ and disjoint from any set $L_1,\dots,L_\delta$ of $\delta$
  pairwise disjoint lines, we get the bound $\delta_3\leq g(\delta_2)$.

  We have found three such bounds. Each of them turns out to be
  stronger than the other two in a certain subinterval of
  $\{0,1,\dots,41\}$; cf.\ the subsequent Remark~\ref{rmk:k=2,3}.

  The first bound $g_1(\delta)$ is derived from the observation that no 
  line of $\PG(6,\F_2)$ that meets some line $L_i$ can be covered by
  a plane in $\mathcal{C}_3$. Hence, denoting by $m(\delta)$
  any lower bound for the number of such lines, we can set
  $g_1(\delta)=381-\lceil m(\delta)/7\rceil$.

  A good lower bound $m(\delta)$ is obtained as follows. The $\delta$
  lines cover $3\delta$ points. Denoting by
  $b_i$ the number of lines in $\PG(6,\F_2)$ containing exactly $i$ of
  these points, we have
  \begin{equation*}
    \setlength{\arraycolsep}{3pt}
    \begin{array}{rlrlrlrll}
      b_0&+&b_1&+&b_2&+&b_3&=&2667,\\
         &&b_1&+&2b_2&+&3b_3&=&3\delta\cdot 63=189\delta,\\
         &&&&b_2&+&3b_3&=&\binom{3\delta}{2}.
    \end{array}
  \end{equation*}
  Hence
  $b_1+b_2=189\delta-\binom{3\delta}{2}=186\delta-9\binom{\delta}{2}$
  and $b_1+b_2+b_3\geq b_1+b_2+\delta=187\delta-9\binom{\delta}{2}$,
  so that we may take
  $m(\delta)=187\delta-9\binom{\delta}{2}=\delta(383-9\delta)/2$. 
  This gives the first upper bound
  \begin{equation*}
    g_1(\delta)=
      381-\left\lceil\frac{\delta(383-9\delta)}{14}\right\rceil.\footnotemark
  \end{equation*}
\footnotetext{The quadratic $\delta(383-9\delta)$ is decreasing for
$\delta\geq\frac{383}{18}\approx 21$, rendering this bound trivial
for large values of $\delta$.}%
  %
For the second bound we estimate the number $b_0$ of
lines disjoint from $L_1\cup\dots\cup L_\delta$ directly. For this we denote the
complementary point set by $T$ and write $t=\#T=127-3\delta$. Since
any point of $T$ is on at most $\lfloor(t-1)/2\rfloor$ lines that are 
contained in $T$, we have $b_0\leq
(t/3)\lfloor(t-1)/2\rfloor$. Since planes in $\mathcal{C}_3$ can cover
only lines contained in $T$, we obtain our second upper bound
\begin{equation*}
  g_2(\delta)=\left\lfloor\frac{t\lfloor(t-1)/2\rfloor}{21}\right\rfloor,
  \quad t=127-3\delta.
\end{equation*}
For the third bound we estimate the number $c(\delta')$ of points covered by 
$\delta'$ planes at pairwise subspace distance $\geq 4$
with the aid of Lemma~\ref{lma:bonferroni}. The
relevant moments/moment bounds are $\mu_1=7\delta'$,
$\overline{\mu}_2=\delta'(\delta'-1)/2$, resulting in
\begin{equation*}
  c(\delta')\geq\frac{\delta'(14i_0+1-\delta')}{i_0(i_0+1)}\quad\text{for
    $7(i_0-1)+1\leq\delta'\leq 7i_0$}.
\end{equation*}
This leads to our third upper bound
\begin{equation*}
  g_3(\delta)=\max\bigl\{\delta';c(\delta')\leq 127-3\delta\bigr\}.
\end{equation*}
The combination of the three bounds, viz.\
$g(\delta)=\min\bigl\{g_1(\delta),g_2(\delta), 
g_3(\delta)\bigr\}$ for $\delta\in\{0,1,\dots,41\}$, takes exactly the
values shown in the table.
\end{proof}
\begin{remark}
  \label{rmk:k=2,3}
  All three individual bounds are needed for the optimal bound
  $g(\delta)$. In fact, $g(\delta)$ coincides with $g_1(\delta)$ in
  the range $1\leq\delta\leq 21$, with $g_2(\delta)$ in the ranges
  $22\leq\delta\leq 28$, $39\leq\delta\leq 41$, and with $g_3(\delta)$ in the
  range $28\leq\delta\leq 41$.\footnote{Furthermore,
    all bounds take the "trivial"
    value $381$ at $\delta=0$}
\end{remark}

\begin{lemma}
  \label{lma:k=3,5}
  $\delta_3\leq h(\delta_5)$, where $h\colon\{0,1,\dots,41\}\to\Z$
  is the (unique) non-increasing function taking the values
  \begin{equation*}
    \begin{array}{c||c|c|c|c|c|c|c|c|c|c}
      \delta&0&1&2&3&5&9&18&33&37&41\\\hline
      h(\delta)&381&376&372&371&370&369&368&367&366&365
    \end{array}
  \end{equation*}
  and jumps (i.e., $h(\delta)>h(\delta+1)$) precisely at the
  corresponding arguments.\footnote{except for $\delta=41$, of course}
\end{lemma}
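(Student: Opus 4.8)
The plan is to follow the template of Lemma~\ref{lma:k=2,3}: produce several explicit upper bounds for $\delta_3$ as functions of $\delta_5$ and let $h$ be their pointwise minimum. The starting point is the translation of the distance condition. For a plane $E\in\mathcal{C}_3$ and a $5$-dimensional codeword $F\in\mathcal{C}_5$ one has $\sdist(E,F)=3+5-2\dim(E\cap F)=8-2\dim(E\cap F)$, so $\sdist(E,F)\geq 4\iff E\not\subseteq F$; moreover $\dim(E\cap F)\geq 3+5-7=1$ always, so every plane meets every such $F$ in exactly a point or exactly a line, never in less and (by the distance condition) never more. Dually, applying the polarity $\pi$ of Lemma~\ref{lma:smax}\eqref{smax:symm} turns $\mathcal{C}_5$ into a partial line spread $\mathcal{L}=\{\ell_1,\dots,\ell_{\delta_5}\}$ of $\PG(6,\F_2)$ (pairwise disjoint lines, hence $\delta_5\le 41$) and $\mathcal{C}_3$ into a family $\mathcal{S}$ of $\delta_3$ solids, pairwise meeting in at most a line, with the condition that no $\ell_j$ lies inside any $S\in\mathcal{S}$ --- equivalently every $S\in\mathcal{S}$ contains at most $2$ of the $3$ points of each $\ell_j$. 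Whichever side one works on, the first (trivial) bound is $\delta_3\le\smax_2(7,4;3)=381$, accounting for $h(0)$.

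For the nontrivial bounds one combines three kinds of counting. (a) \emph{Layers through hyperplanes}: a plane $E\in\mathcal{C}_3$ with $\dim(E\cap F_j)=2$ has $E+F_j$ equal to one of the three hyperplanes through $F_j$ and hence lies in it; since $\mathcal{C}_3\cap H$ is a distance-$4$ constant-dimension code in $H\cong\PG(5,\F_2)$, each such hyperplane absorbs at most $\smax_2(6,4;3)=77$ of these planes, which controls how many codewords can ``hug'' each $F_j$. (b) \emph{Point/line degrees and Lemma~\ref{lma:bonferroni}}: in the dual picture the solids $S\in\mathcal{S}$ cover points of $\PG(6,\F_2)$ with every pair sharing at most a line, while $\mathcal{L}$ forbids a solid from containing all three points of any $\ell_j$; feeding the resulting moment (in)equalities $\sum a_i=127$, $\sum i a_i=15\delta_3$, $\sum\binom{i}{2}a_i\le 3\binom{\delta_3}{2}$, together with the extra ``at most two per spread line'' restriction, into Lemma~\ref{lma:bonferroni} yields a bound of the shape $\delta_3\le 381-\lceil m(\delta_5)/7\rceil$ for a suitable staircase $m$. (c) \emph{Local packing in small quotients}: for a line $L$ contained in a solid, the images $F_j/L$ of the flats of $\mathcal{C}_5$ containing $L$ are planes of $\PG(6,\F_2)/L\cong\PG(4,\F_2)$ pairwise meeting in a point, and once sufficiently many of them occur they are forced to cover the whole quotient --- the relevant thresholds being the partial-spread bounds $2^k+1$ of the spaces $\PG(2k-1,\F_2)$ --- so the corresponding line cannot be covered by $\mathcal{C}_3$; counting such forced-empty lines sharpens the bound and, together with the behaviour near $\delta_5=41$ (where $\mathcal{L}$ leaves only $127-3\delta_5$ holes and the covering estimate is tightest), explains the jump positions $3,5,9,18,33$ and $37,41$. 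Setting $h=\min$ of these and the trivial bound, and checking that the resulting staircase takes exactly the tabulated values with jumps exactly at $1,2,3,5,9,18,33,37,41$, completes the proof.

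The main obstacle is sharpness. The total improvement over $381$ is only $16$ over the whole range $0\le\delta_5\le 41$, so essentially every inequality must be tight: in particular the ``few uncovered points/lines'' contributed by the spread have to be counted exactly, which amounts to solving small extremal problems about how many pairwise-disjoint spread lines a single solid can meet (equivalently, partial line spreads inside $\PG(4,\F_2)$ and $\PG(5,\F_2)$ constrained by a sub-configuration). The two largest drops $381\to 376\to 372$ at $\delta_5=1,2$ are not governed by the generic staircase and will almost certainly need a dedicated argument (or a short exhaustive computation), as is customary for the small-parameter results of this section.
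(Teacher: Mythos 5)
Your proposal is a plan rather than a proof, and the gap sits exactly where the lemma has content: none of the tabulated values $376,372,371,\dots,365$ is actually derived, and you concede yourself that the two largest drops, at $\delta_5=1,2$, ``will almost certainly need a dedicated argument (or a short exhaustive computation)''. The paper needs no such computation: it fixes a single codeword $U\in\mathcal{C}_5$ and counts the \emph{lines} of $\PG(6,\F_2)$ covered by $\mathcal{C}_3$ according to $\dim(L\cap U)$. Writing $a_i$ for the number of planes of $\mathcal{C}_3$ meeting $U$ in an $i$-space, this gives the linear constraints $a_1+a_2=\delta_3$, $4a_1\leq 1024$, $3a_1+6a_2\leq 1488$, $a_2\leq 155$, whose optimum is $\delta_3\leq 376$ (attained only by leaving exactly $35$ lines of $U$ uncovered); that is $h(1)$. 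Your ingredient (a) only yields $a_2\leq 3\cdot\smax_2(6,4;3)=231$, too weak to reach $376$, and your ingredient (b) is off target: the moments you write down concern the \emph{points} covered by the dual solids, and since points may be covered with high multiplicity these moments impose no constraint whatsoever on $\delta_3$ (Lemma~\ref{lma:bonferroni} applied to $\mu_1=15\delta_3$, $\overline{\mu}_2=3\binom{\delta_3}{2}$ gives a lower bound never exceeding about $75<127$). The count that carries the information is the line count, where each of the $2667$ lines is covered at most once by $\mathcal{C}_3$, and the dependence on $\delta_5$ enters through lines lying in codewords of $\mathcal{C}_5$.

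The multi-flat part of the paper is also different from your sketch (b)/(c). From the same LP one reads off that any feasible solution with $a_2=155-t$ forces $\delta_3\leq 341+t$; hence for every $t\in\{0,\dots,35\}$ either $\delta_3\leq 340+t$ or \emph{every} codeword of $\mathcal{C}_5$ has at least $t$ uncovered lines. In the second alternative Lemma~\ref{lma:bonferroni} is applied to the union of these $\delta_5$ uncovered-line sets, whose pairwise intersections have size at most $7$ because two codewords of $\mathcal{C}_5$ meet in a plane, supplemented by the bound $\lceil t\delta_5/9\rceil$ coming from the fact (which you also note) that a fixed line lies in at most $9$ codewords of $\mathcal{C}_5$; then $h(\delta_5)$ is the minimax over $t$ of the two alternatives, and this single mechanism produces the entire staircase, including the values at $\delta_5=1,2$ that you set aside. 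So while the general strategy of combining several counting bounds and taking a pointwise minimum (as in Lemma~\ref{lma:k=2,3}) points in the right direction, the specific bounds you propose neither reproduce the values of $h$ nor are strong enough to do so; the decisive per-codeword line-count LP, the trade-off in $t$, and the Bonferroni bound on the union of uncovered-line sets are missing.
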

\begin{proof}
  First we consider a $4$-flat $U\in\mathcal{C}_5$ and denote by $a_i=a_i(U)$
  the number of planes $E\in\mathcal{C}_3$ intersecting $U$ in a
  subspace of dimension $i$, $1\leq i\leq 3$. Since $\sdist(E,U)\geq
  4$, we must have $a_3=0$, and $a_1,a_2$ are subject to the
  following restrictions:
  \begin{equation}
    \label{eq:k=2,5}
    \setlength{\arraycolsep}{3pt}
    \begin{array}{rlrlr}
      a_1&+&a_2&=&\delta_3,\\
      4a_1&&&\leq&1024,\\
      3a_1&+&6a_2&\leq&1488,\\
      &&a_2&\leq&155.
    \end{array}
  \end{equation}
  This follows from counting the lines $L$ with $\dim(L\cap
  U)=i\in\{0,1,2\}$ covered by $\mathcal{C}_3$ in two ways, observing
  that the number of such lines cannot exceed $2^{10}$ for $i=0$,
  $(63-15)\cdot 31=1488$ for $i=1$, and $\gauss{5}{2}{2}=155$ for
  $i=2$.

  Viewed as a maximization problem for $\delta_3$, \eqref{eq:k=2,5}
  has the unique optimal solution $a_1^*=256$, $a_2^*=120$ with
  objective value $\delta_3^*=376$. This accounts for $h(1)=376$
  in the table. If a plane subspace code with these parameters
  actually exists, it must leave exactly $35$ lines in $U$ uncovered
  (and cover all lines not contained in $U$). 

  Since the $4$-flats in $\mathcal{C}_5$ form a dual partial line
  spread, any two of them intersect in a plane and have at most
  $7$ lines in common. This implies that for $\delta_5>1$ it is
  impossible to leave simultaneously exactly $35$ lines uncovered
  in each $4$-flat in $\mathcal{C}_5$, and enables us
  to derive subsequently a stronger bound for $\delta_3$. 
  
  It is easily checked that a feasible solution of \eqref{eq:k=2,5}
  with $a_2=155-t$ must have $\delta_3\leq 341+t$. For any 
  $t\in\{0,1,\dots,35\}$ either at least $t$ lines remain uncovered in
  each of the $\delta_5$ $4$-flats in $\mathcal{C}_5$ or the bound
  $\delta_3\leq h_1(t)=340+t$ holds. In the first case we will use a lower
  bound $c(\delta,t)$ for the union of $\delta$ line sets of size $t$
  pairwise intersecting in at most $7$ lines (with a slight modification for
  large $\delta$, see below) to get the bound $\delta_3\leq
  h_2(\delta_5,t)$ with $h_2(\delta,t)=381-\lceil
  c(\delta,t)/7\rceil$. Putting the two bounds together and minimizing over $t$
  gives the final bound $\delta_3\leq h(\delta_5)$ with
  \begin{align*}
    h(\delta)&=\min_{0\leq t\leq
               35}\max\bigl\{h_1(t),h_2(\delta,t)\bigr\}\\
    &=381-\max_{0\leq t\leq
               35}\min\bigl\{41-t,\lceil c(\delta,t)/7\rceil\bigr\}.
  \end{align*}
  A suitable bound $c(\delta,t)$ can again be obtained with the aid of
  Lemma~\ref{lma:bonferroni}. Suppose $U_1,\dots,U_\delta$ are $4$-flats in
  $\PG(6,\F_2)$ pairwise intersecting in a plane and
  $\mathcal{L}_1,\dots,\mathcal{L}_\delta$ line sets with
  $\#\mathcal{L}_j=t$ and $L\subset U_j$ for
  $L\in\mathcal{L}_j$. Denote by $b_i$ the number of lines of
  $\PG(6,\F_2)$ contained in exactly $i$ of the line sets. Then 
  \begin{align*}
    \sum_{i\geq 0}b_i&=2667,\\
    \sum_{i\geq 1}ib_i&=t\delta,\\
    \sum_{i\geq 2} \binom{i}{2}b_i&\leq\overline{\mu}_2=7\binom{\delta}{2}.
  \end{align*}
  Applying Lemma~\ref{lma:bonferroni} gives the lower bound
  \begin{equation*}
    \#(\mathcal{L}_1\cup\dots\cup\mathcal{L}_\delta)\geq
    c(\delta,t)=\frac{\delta(2ti_0+7-7\delta)}{i_0(i_0+1)}
    \quad\text{with}\quad
    i_0=1+\left\lfloor\frac{7(\delta-1)}{t}\right\rfloor.
  \end{equation*}
  A different lower bound,
  \begin{equation*}
  \#(\mathcal{L}_1\cup\dots\cup\mathcal{L}_\delta)\geq
  \left\lceil\frac{t\delta}{9}\right\rceil,
  \end{equation*}
  is obtained from the observation that every line
  $L\in\mathcal{L}_1\cup\dots\cup\mathcal{L}_\delta$ can be contained in
  at most $9$ $4$-flats $U_j$, since the $4$-flats $U_j$ containing $L$
  form a dual partial line spread in $\PG(6,\F_2)/L\cong\PG(4,\F_2)$.
  
  Defining $c(\delta,t)$ as the maximum of the two bounds, we obtain 
  the final bound $h(\delta)$ shown in the table.\footnote{Using the second
    bound leads to an improvement of the final bound for
    $34\leq\delta\leq 41$.}
\end{proof}
With these lemmas at hand we can finish the proof of the theorem.

\begin{proof}[Proof of Theorem~\ref{thm:v=7,d=4}]
  It remains to show the upper bound $M\leq 407$.
  First we consider the case $\delta_0=\delta_1=\delta_6=\delta_7=0$.
  Using Lemmas \ref{lma:k=2,3}, \ref{lma:k=3,5} and the corresponding
  bounds in the dual view, we have
  \begin{equation}
    \label{eq:v=7,d=4}
    M\leq\max_{\substack{0\leq\delta_2\leq 41\\
        0\leq\delta_5\leq
        41}}\delta_2+F\Bigl(\min\bigl\{g(\delta_2),h(\delta_5)\bigr\},
    \min\bigl\{g(\delta_5),h(\delta_2)\bigr\}\Bigr)+\delta_5,
  \end{equation}
  where $F(u_3,u_4)$ denotes the maximum of $\delta_3+\delta_4$
  subject to $0\leq\delta_3\leq u_3$, $0\leq\delta_4\leq u_4$,
  $\delta_4\leq\delta_3$ and the constraints $\delta_4\leq 190$,
  $\delta_3\leq f(\delta_4)$ imposed
  by Lemma~\ref{lma:k=3,4}. It is easy to see that 
  \begin{equation*}
    F(u_3,u_4)=\max_{0\leq\delta_4\leq\min\{u_3,u_4,190\}}
    \min\bigl\{u_3,f(\delta_4)\bigr\}+\delta_4.
  \end{equation*}
  Substituting this into \eqref{eq:v=7,d=4} gives a maximization
  problem which can be solved by exhaustive search (taking a few
  seconds on a current PC or notebook). There are two optimal
  solutions, viz.\
  $(\delta_2^*,\delta_3^*,\delta_4^*,\delta_5^*)=(0,365,1,40)$ and
  $(0,365,0,41)$, with objective value
  $\delta_2^*+\delta_3^*+\delta_4^*+\delta_5^*=406$.

  Finally we consider the case where
  $\delta_0+\delta_1+\delta_6+\delta_7>0$. Since clearly
  $\delta_0+\delta_1\leq 1$ and $\delta_6+\delta_7\leq 1$, the proof
  can be finished by determining all solutions of \eqref{eq:v=7,d=4}
  with $M\geq 405$. Apart from the two optimal solutions, these are
  $(\delta_2,\delta_3,\delta_4,\delta_5)=(0,365,1,39)$,
  $(0,365,2,38)$, $(0,366,2,37)$, $(0,366,3,36)$. Since all $6$
  dimension distributions have $\delta_5>0$, corresponding subspace
  codes, provided they exist, can be extended by at most one codeword
  of dimension $1$.
  This gives the bound $M\leq 407$ and completes the proof of the theorem.
\end{proof}
\begin{remark}
  \label{rmk:v=7,k=4}
  The proof of Theorem~\ref{thm:v=7,d=4} shows that a subspace code
  meeting the bound in the theorem must have dimension distribution
  $(0,1,0,365,0,41,0,0)$ or $(0,1,0,365,1,40,0,0)$ (under the
  assumption $\delta_3\geq\delta_4$), and its dimension-three layer
  $\mathcal{C}_3$ must leave one point $P$ of $\PG(6,\F_2)$ uncovered. For
  $\mathcal{C}_3$ we have the bound $\delta_3\leq 372$, since none of
  the $63$ lines through $P$ can be covered by a plane in
  $\mathcal{C}_3$, but this is too weak to exclude the two dimension
  distributions. We can only say the following:
  Should a $(7,407,4)_2$ subspace code indeed exist, then its dimension-three
  layer cannot be extended to a $2$-analogue of the
  Fano plane, since for subcodes of a putative $2$-analogue 
  leaving one point uncovered we have the stronger bound $M\leq 360$.
\end{remark}

\subsection{The Cases $(v,d)=(6,3)$ and $(7,3)$}\label{ssec:d=3)}
Etzion and Vardy obtained the lower bound $\smax_2(6,3) \ge 85$ in
\cite{etzion2011error}, and in \cite{etzion2013problems} Etzion
conjectured that this lower bound could be raised by extending optimal
$(6,M,4;3)_2$ constant-dimension codes. Since $\smax_2(6,4;3)=77$ and
the optimal codes fall into $5$ isomorphism types \cite{smt:fq11proc},
we can easily compute the corresponding cardinality-maximal
extensions:
\begin{itemize}
\item Types A, B: $\#\mathcal{C}\le 91$, with one particular
  realizable dimension distribution $\delta=(0,0,7,77,7,0,0)$;
\item Type C: $\#\mathcal{C}\le 93$, with $\delta=(0,0,8,77,8,0,0)$;
\item Type D: $\#\mathcal{C}\le 95$, with $\delta=(0,0,8,77,10,0,0)$;
\item Type E: $\#\mathcal{C}\le 95$, with $\delta=(0,0,11,77,7,0,0)$.
\end{itemize}
Restricting the allowed dimensions to $\{0,1,2,3\}$, we obtain the following
maximal extensions:
\begin{itemize}
\item Types A, B: $\#\mathcal{C}\le 84$, with $\delta=(0,0,7,77,0,0,0)$;
\item Type C: $\#\mathcal{C}\le 86$, with $\delta=(0,0,9,77,0,0,0)$;
\item Types D, E: $\#\mathcal{C}\le 88$, with $\delta=(0,0,11,77,0,0,0)$.
\end{itemize}
Since $\smax_2(6,3;\{0,1,2\})=\smax_2(6,3;\{4,5,6\})=21$, we have 
$\smax_2(6,3)\leq 77+2\cdot 21-1
=118$. Using an integer linear programming approach, 
combined with some heuristics, we found a $(6,104,3)_2$ code with
dimension distribution $(0,0,17,69,18,0,0)$.

For the case $(v,d)=(7,3)$ we have $\smax_2(7,3)\le 776$
\cite{bachoc2013bounds}.  The previously best known lower bound was
$\smax_2(7,3)\ge 584$ \cite{etzion2013codes}.  Using again an integer linear
programming approach, one of the $(7,329,4;3)_2$ constant-dimension
codes from \cite{mt:alb80} can be extended by at least $262$
four-dimensional codewords. Adding the null space and $\mathbb{F}_2^7$
yields the improved lower bound $\smax_2(7,3)\ge 329+262+1+1=593$.

\subsection{The Case $(v,d)=(5,4)$}\label{ssec:v=5,d=4}
For $v=5$, $d=4$, by Theorem~\ref{thm:d=v-1}
we have to consider the dimension distributions $(0,0,9,0,0,0)$,
$(0,0,8,1,0,0)$ and $(0,0,8,0,1,0)$, up to duality.  The codes with
dimension distribution $(0,0,9,0,0,0)$ are exactly the $(5,9,4;2)_2$
constant dimension codes or in other words, the partial line spreads
of $\PG(4,\F_2)$ of size $9$.  Up to equivalence, there are $4$ such
partial spreads \cite{gordon-shaw-soicher04}; see also
\cite{smt:fq11proc}.

The codes realizing the remaining two dimension distributions contain
a partial line spread $\mathcal{S}_8$ of size $8$ as a subcode. Up to
equivalence, there are $9$ types of $\mathcal{S}_8$, all contained in
some maximal partial line spread $\mathcal{S}_9$ of size $9$
\cite[Sect.~5.2]{gordon-shaw-soicher04}.  For the dimension
distribution $(0,0,8,1,0,0)$, the plane $Y_0$ represented by the unique
codeword of dimension $3$ must be disjoint from each of the $8$ lines
in $\mathcal{S}_8$.  Thus, these codes $\mathcal{C}$ are exactly the
partitions of $V$ into $8$ lines and a single plane.  Extending
$\mathcal{S}_8$ by a line $X_0\subset Y_0$, we get an $\mathcal{S}_9$
having a moving line $X_0$ in the
sense of Section~\ref{ssec:d=v-1} or, using the terminology of
\cite{gordon-shaw-soicher04}, an $\mathcal{S}_9$ of regulus type
\textsf{X} with the $4$ reguli sharing the line $X_0$. Thus the
$\mathcal{S}_8$ contained in $\mathcal{C}$ is the unique regulus-free
partial spread (regulus type \textsf{O} in \cite{gordon-shaw-soicher04}). If
follows that up to equivalence there is a unique code with dimension
distribution $(0,0,8,1,0,0)$.  It is given by the lifted Gabidulin
code $\gabidulin_{5,2,2}$ together with its special plane.

Now let $\mathcal{C}$ be a subspace code with dimension distribution
$(0,0,8,0,1,0)$. It has the form
$\mathcal{C} = \mathcal{S}_8 \cup \{H\}$ with a hyperplane (solid) $H$.
The code
$\mathcal{C}$ has minimum distance $4$ if and only if for each line
$L\in \mathcal{S}$, $\dim(L \cap H) = 1$.
Consider a maximal partial spread $\mathcal{S}_9$
containing $\mathcal{S}_8$. Since $H$ contains at most one line of
$\mathcal{S}_9$, it must be one of the $3$ solids containing the
special plane $Y_0$ of $\mathcal{S}_9$ and contain exactly one line
$L$ of $\mathcal{S}_9$. 
If $L$ is contained in $Y_0$, $\mathcal{S}_8$ has regulus type
\textsf{O} and $\mathcal{C} = \mathcal{S}_8 \cup \{H\}$ has the type
mentioned in the proof of Theorem~\ref{thm:d=v-1}\eqref{d=v-1:odd}
with $H=Y$.  Moreover, it is readily checked that the $3$ possible
choices for $Y$ yield equivalent codes.
If $L$ is not contained in $Y_0$, then $H=L+Y_0$ and $L$ is contained
in $2$ reguli of $\mathcal{S}_9$. This implies that $\mathcal{S}_8$
has regulus type \textsf{II} and is again uniquely determined
\cite{gordon-shaw-soicher04}. Since $H$ is determined by
$\mathcal{S}_8$ (for example, as the span of the $7$ holes of
$\mathcal{S}_8$), $\mathcal{C}$ is uniquely determined as well.


In all we have seen that up to equivalence there are $2$ subspace codes
realizing the dimension distribution $(0,0,8,0,1,0)$.

Altogether, there are $4 + 1 + 2 = 7$ types of $(5,9,4)_2$ subspace codes.

\subsection{The Case $(v,d)=(5,3)$}\label{ssec:v=5,d=3)}

For $v=5$, $d=3$, by Remark~\ref{rmk:d=v-2:delta} we have to
consider the dimension distributions $(0,0,9,9,0,0)$,
$(0,1,8,8,1,0)$ and $(0,0,8,9,1,0)$, up to duality.  So in each
case, there is a subcode of dimension distribution $(0,0,9,0,0,0)$
or $(0,0,8,0,1,0)$ and subspace distance $4$ (all dimensions of the
codewords in the subcode have the same parity, so distance $3$
implies distance $4$).  We have already seen that up to equivalence,
the number of possibilities for this subcode is $4$ or $2$,
respectively.  For these $6$ starting configurations, we enumerated
all extensions to a code of size $18$ by a clique search
\cite{cliquer}.  The resulting codes have been filtered for
equivalence using nauty.  In the end, we got the following numbers
of equivalence classes: For $\delta = (0,0,9,9,0,0)$, there are
17708 codes, among them 306 self-dual ones.  For
$\delta = (0,1,8,8,1,0)$, there are 2164 codes, among them 73
self-dual ones.  For $\delta = (0,0,8,9,1,0)$, there are 4426 codes,
of course none of them self-dual.  In total, there are
$17708+2164+4426 = 24298$ types of $(5,9,3)_2$ subspace codes.


\section*{Acknowledgments} 

The authors are grateful to Tuvi Etzion for comments regarding the
upper bound in Theorem~\ref{thm:v=7,d=4} and to two anonymous
reviewers for valuable comments and corrections.

The first author would like to thank the organizers of 
ALCOMA~15 for the invitation to the conference, giving him the opportunity to
spend a very pleasant and academically rewarding week at Kloster Banz.

\bibliographystyle{AIMS} 

\end{document}